\newcommand{\E}{\mathbb{E}}
\newcommand{\F}{\mathbb{F}}
\newcommand{\N}{\mathbb{N}}
\renewcommand{\P}{\mathbb{P}}
\newcommand{\R}{\mathbb{R}}
\renewcommand{\S}{\mathbb{S}}
\newcommand{\cA}{\mathcal{A}}
\newcommand{\cC}{\mathcal{C}}
\newcommand{\cD}{\mathcal{D}}
\newcommand{\cJ}{\mathcal{J}}
\newcommand{\cL}{\mathcal{L}}
\newcommand{\cN}{\mathcal{N}}
\newcommand{\cS}{\mathcal{S}}
\newcommand{\cU}{\mathcal{U}}
\newcommand{\cY}{\mathcal{Y}}
\newcommand{\fh}{\hat{h}}
\newcommand{\rD}{\mathrm{D}}
\newcommand{\rd}{\mathrm{d}}
\newcommand{\argdot}{\,\cdot\,}
\newcommand{\equi}{\gamma_{\mathrm{eq}}}
\newcommand{\hEqui}{h_{\mathrm{eq}}}
\newcommand{\vEqui}{v_{\mathrm{eq}}}
\newcommand{\pEqui}{p_{\mathrm{eq}}}
\newcommand{\eps}{\varepsilon}
\theoremstyle{plain}
\newtheorem{theorem}{Theorem}[section]
\newtheorem{proposition}[theorem]{Proposition}
\newtheorem{corollary}[theorem]{Corollary}
\newtheorem{lemma}[theorem]{Lemma}
\newtheorem{assumption}{Assumption}
\theoremstyle{definition}
\newtheorem{remark}[theorem]{Remark}
\newcommand{\DM}{{decision maker}}
\title{Continuous-Time Dynamic Decision Making with Costly Information}
\author{
Christoph Knochenhauer\thanks{
Technical University of Munich, School for Computation, Information and Technology, Department of Mathematics,
Parkring 11, 85748 Garching bei M\"{u}nchen, Germany
 ({\tt knochenhauer@tum.de})}
\and
Alexander Merkel\thanks{
Technical University of Berlin, Institute of Mathematics,
Str.\ des 17.\ Juni 136, 10587 Berlin, Germany
 ({\tt merkel@math.tu-berlin.de})}
\and
Yufei Zhang
\thanks{Department of Mathematics,
Imperial College London, 
  United Kingdom
({\tt yufei.zhang@imperial.ac.uk})}
}
\begin{document}
\maketitle

\begin{abstract}
We consider a continuous-time linear-quadratic Gaussian control problem with partial
observations and costly information acquisition.
More precisely, we assume the drift of the state process to be governed by an unobservable
Ornstein--Uhlenbeck process.
The decision maker can additionally acquire information on the hidden state
by conducting costly tests, thereby augmenting the available information.

Combining the Kalman--Bucy filter with a dynamic programming approach,
we show that the problem can be reduced to
a deterministic control problem for the conditional variance of the unobservable state.
Optimal controls and value functions are
derived in a semi-explicit form, and we present an extensive study of the qualitative
properties of the model.

We demonstrate that both the optimal cost and the marginal cost increase with model uncertainty.
We identify a critical threshold: below this level, it is optimal not to acquire additional information,
whereas above this threshold, continuous information acquisition is optimal,
with the rate increasing as uncertainty grows.
For quadratic information costs, we derive the precise asymptotic behavior
of the acquisition rate as uncertainty approaches zero and infinity.

\end{abstract}

\section{Introduction}
\label{sec:intro}
This paper investigates a continuous-time stochastic optimal control problem with partial observations
in which the decision maker can continuously choose to acquire costly information
on the unobservable state.
The motivation for studying this model is well described in \cite{xu2023decision},
where the authors describe the current state of research in this area as follows:
\begin{quote}
	``To our knowledge, the literature on models that combine sequential decision-making
	with dynamically changing costly information sources is not adequately developed.
	Indeed, the existing models predominantly consider both a single information source
	and a single decision occurring at an (optimal) time,
	at which the problem immediately terminates.''
\end{quote}
With this in mind, the motivation of this article is to develop a tractable
continuous-time model of dynamic decision making with costly controlled information.
Such problems are, in general, mathematically quite challenging due to the explicit
dependence of the observation filtration on the control; we refer to \cite{cohen2023optimal}
for recent work in this direction.
One of the main contributions of this article
is that we are able to derive a non-trivial model which is tractable enough to
allow for semi-explicit solutions and for an extensive analysis of the qualitative
properties of the model.

To make this more precise, in this paper we consider
a continuous-time state process of the form
\[
  \rd X^u_t = \bigl(\mu_t + u_t) \rd t + \sigma_1 \rd B^1_t,
\]
where $u$ is the control and $\mu$ is an unobservable Ornstein--Uhlenbeck process with dynamics
\[
  \rd \mu_t = \lambda\bigl(\bar\mu - \mu_t\bigr)\rd t + \sigma_2\rd B^2_t.
\]
A central feature of our model is that the decision maker has access to additional costly
information on $\mu$ via an auxiliary controlled state variable with dynamics
\begin{equation}\label{eq:intro_aux_state}
  \rd Y_t^h = \mu_t\sqrt{h_t}\rd t + \rd B^3_t,
\end{equation}
where $h$ is the information acquisition rate.
Intuitively, choosing a large value for the control $h$ increases the ratio of the
signal $\mu\sqrt{h}\rd t$ to the noise $\rd B^3$, hence revealing additional information
about $\mu$. 
The dynamics \eqref{eq:intro_aux_state} can be interpreted as conducting independent tests
on the hidden state $\mu$, similar to the approach in \cite{moscarini2001optimal, ekstrom2024detection},
with $h$ representing the frequency of these tests within a short time interval;
see Section \ref{sec:theOP} for more details.
The decision maker's objective is to minimize the expected cost of the form
\begin{equation}
\label{eq:intro_cost}
  \E\Bigl[\int_0^\infty e^{-\delta t}\Bigl(
    \frac{1}{2}\bigl(\kappa|X_t^u|^2 + \rho|u_t|^2\bigr) + c(h_t)\Bigr)\rd t\Bigr]
\end{equation}
for $\kappa, \rho>0$ and a convex cost function $c$ for the information acquisition rate $h$.
In other words, the goal is to steer
$X^u$ close to zero while balancing the trade-off between the quadratic cost for the state process 
$X^u$, and state control $u$,
and the additional cost associated with for information acquisition via $h$. 

Here, we emphasize that the information control $h$ multiplicatively affects the drift
of the observation dynamics \eqref{eq:intro_aux_state},
distinguishing this information acquisition control problem from the classical partially observed
linear-quadratic (LQG) control problem, where the observation process is typically either
uncontrolled (see e.g., \cite{Bensoussan_1992, wang2015linear}) or controlled linearly
\cite{barzykin2024unwinding}. 
The nonlinearity of $h$ in \eqref{eq:intro_aux_state} implies that the optimal information
acquisition rate $h$ is no longer a simple linear function of the conditional mean
of $\mu$, as seen in classical settings.
Instead, the optimal acquisition rate must account for the entire conditional distribution of
$\mu$ given the observations, making its characterization more complex.
 
While our problem formulation is kept abstract, LQG control
problems have many applications in essentially all areas in which controlled
dynamical systems under uncertainty play a role.
This includes but is not limited to robotics, economics, finance, engineering, physics, medicine,
and many other fields.
For example, let us specifically mention the connection to optimal execution
problems in mathematical finance \cite{cartea2015algorithmic, gueant2016financial}
and highlight the relevance of dynamic costly information acquisition in that context.
The objective in optimal execution problems is to liquidate a large
financial position in the presence of price impact, that is, in a situation in which
large trades lead to price movements which are adverse to the trader's objective.
Many models of optimal execution are formulated as linear-quadratic stochastic control
problems.
We furthermore specifically mention the articles
\cite{belak2018liquidation, lehalle2019incorporating} in which the authors
consider price processes involving signals, that is, price processes having a drift.
While these articles assume the signals to be observable, in practice the signals
are unobservable, and significant effort has to be made to obtain good estimates.
In particular, acquiring information in this setting is costly
(hiring staff, acquiring additional data, etc.), and the decision
maker can choose how much wealth to spend on improving the quality of information.
As such, costly information acquisition should be explicitly included in the
problem formulation, emphasizing the need to get a better theoretical understanding
of the effects of costly information acquisition in control problems with partial
observations.

Our model is furthermore related to the fields of reinforcement learning and adaptive
control.
In these fields, the control of the state process and the control of
information are often identical \cite{szpruch2021exploration, basei2022logarithmic},
while in our setting of costly information acquisition
the controls for the state process and the information acquisition are decoupled.
An extensive literature review of related work can be found in \cite{xu2023decision}.

To the best of our knowledge, our model is the first instance of a continuous-time
stochastic control problem with dynamic, costly information acquisition.
In \cite{xu2023decision}, information is acquired passively and is embedded in optimal stopping problems.
Similarly, in \cite{moscarini2001optimal,dalang2015quickest,
zhong2022optimal,ekstrom2024detection} the information acquisition is dynamic, and
the control problem under consideration is a problem of optimal stopping.

The mechanism of costly and controlled information acquisition that we consider in this
article goes back to \cite{moscarini2001optimal} and has been further considered,
for example, in \cite{dalang2015quickest,ekstrom2024detection}.
There, however, the authors consider problems involving an unobservable two-state
random variable, whereas we consider a continuously evolving hidden Ornstein--Uhlenbeck process.
In the case of a static hidden random variable, the uncertainty is
decreasing over time as there is no additional exogenous uncertainty injected into the system over time.
This is in contrast to our model where the hidden state is a diffusion process,
and hence the amount of uncertainty on the hidden state depends on the rate of information acquisition.
Next, in \cite{dalang2015quickest,ekstrom2024detection},
the cost is linear in the rate of information acquisition
and therefore the optimal control is of bang-bang type,
whereas we consider a strictly convex cost structure $c(h)$ as, for example, in
\cite{moscarini2001optimal,zhong2022optimal}.
This is well-motivated by strictly increasing marginal cost for information acquisition.

To solve the problem considered in this article, we proceed as follows.
We begin in Section~\ref{sec:theOP} with a rigorous formulation of the optimization
problem, including a discussion motivating the form of the observation 
process~\eqref{eq:intro_aux_state} and a careful description of the decision makers
information structure.

In Section~\ref{sec:transformation}, we apply classical techniques from filtering theory
to embed the original partial observation problem into an auxiliary full information
problem with a larger state space.
The main tool for the transformation is the Kalman--Bucy filter.
We emphasize that while stochastic control problems with control-dependent information are often formulated in a weak formulation to improve tractability (see, e.g., \cite{benevs1991separation}),
our approach successfully addresses the problem in the technically more
challenging strong formulation (see Remark \ref{rmk:weak_strong} for details).

The auxiliary full information problem consists of three states given by the original
state process $X^u$, the conditional mean and the conditional variance
of the hidden Ornstein--Uhlenbeck process $\mu$.
In Section~\ref{sec:reduction}
we show that the problem can be reduced to a nonlinear deterministic control problem
for the conditional variance alone.
This reduction is achieved by constructing
a candidate solution of the Hamilton--Jacobi--Bellman (HJB) equation of the full information
problem which is explicit in all state variables except for the conditional variance.
This leads to a reduction of the HJB equation, and the reduced equation itself is
an HJB equation corresponding to a control problem for the conditional variance.

In Section~\ref{sec:reduced-control-problem} we establish the continuous differentiability of the value
function $v$ of the reduced control problem for the conditional variance
(Corollary \ref{cor:regularity-original-value-function}), which subsequently
allows for the construction of optimal controls for both
the reduced problem and the full information problem using verification arguments. 
The regularity of the reduced value function $v$ is achieved through a control-theoretic approach,
rather than relying on standard PDE methods.  
This is because the associated HJB equation is first-order and lacks elliptic regularity.
In particular, we first show that the reduced problem
admits a continuous optimal control using a compactification argument.
Using this together with the observation that
the controlled state process is twice differentiable in the initial state,
we show that the value function is increasing and strictly concave.
We then reparametrize the problem
in terms of the conditional precision (the inverse of the conditional variance),
and show that the reparametrized value function is strictly convex.
Since concavity in the conditional variance implies semi-concavity in the conditional precision,
it follows that the value function in both parametrizations is continuously differentiable.
This, along with a verification argument, enables the construction of a unique continuous
optimal control in feedback form. 

We then extensively analyze our model's behavior under the optimal information acquisition control
in Sections~\ref{sec:reduced-control-problem} and \ref{sec:properties}.
Specifically, we demonstrate that both the control cost and the marginal control cost increase
as the model's uncertainty increases (Remark \ref{rmk:concavity}).
Moreover, we identify a critical threshold where, if the uncertainty in the hidden state $\mu$
(measured by the conditional variance) is below this level,
it is optimal not to acquire additional information.
Conversely, when uncertainty surpasses this threshold,
continuous information acquisition becomes optimal, with the acquisition rate increasing in uncertainty
(Lemma \ref{lem:feedback-map}).
We show that this critical threshold is zero if the cost function 
$c$ in \eqref{eq:intro_cost} is of a power type, indicating that it is optimal to acquire information
continuously (Remark \ref{rmk:power_threshhold}). 
For a quadratic cost $c$, the information acquisition rate vanishes quadratically
as the conditional variance approaches zero and is asymptotically proportional to the conditional 
standard deviation as the conditional variance approaches infinity
(Theorem \ref{eq:quadratic_asymptotic}).

Finally, we prove that the optimal conditional variance and the corresponding optimal information acquisition
rate converge monotonically to an equilibrium level, regardless of the initial uncertainty of the system (Proposition \ref{prop:properties-equilibrium-state-control}).
We also explicitly characterize the optimal value function and feedback control near the equilibrium (Proposition \ref{prop:value-equilibrium}). 
The monotonicity of the equilibrium with respect to the model parameters is obtained through a sensitivity analysis (Section \ref{subsec:sensitivity}).
Numerical experiments are presented to compare our model with two benchmark problems:
one where the decision maker fully observes the hidden state $\mu$
and another where the state $\mu$ is unobserved but no additional information can be acquired
(Section \ref{sec:properties-original}).
They further serve as illustrations for the qualitative properties.

\section{Problem Formulation}\label{sec:theOP}
Throughout the article, all processes are defined on the time interval $[0,\infty)$
unless explicitly stated otherwise.
We fix a complete probability space $(\Omega,\mathcal{F},\P)$
supporting a three-dimensional standard Brownian motion $B=(B^{1}, B^{2}, B^{3})$ and an independent random variable $\mu_{0}$ with Gaussian distribution
$\cN(m_{0},\gamma_{0})$ under $\P$, where $m_{0}\in\R,\gamma_{0} \geq 0$.
We denote by $\F=(\mathcal{F}_{t})_{t\in[0,\infty)}$ the filtration generated by $\mu_{0}$
and $B$ augmented by the $\P$-nullsets, so that $\F$ satisfies the usual assumptions
of right-continuity and completeness.

We consider a decision maker who chooses a control
$u:\Omega\times[0,\infty)\to\R$ to control the state
$(X^{u},\mu)$ governed by the stochastic differential equation
\begin{align}\label{eq:po-state}
	\rd \begin{pmatrix}
		X^{u}_{t} \\ \mu_{t}
	\end{pmatrix}
	=\begin{pmatrix}
		 \mu_{t}+u_{t} \\ \lambda(\bar{\mu}-\mu_{t})
	 \end{pmatrix}\rd t
	+ \begin{pmatrix}
		  \sigma_{1} & 0 \\ 0 & \sigma_{2}
	  \end{pmatrix} \rd \begin{pmatrix}
		B_{t}^{1} \\ B_{t}^{2}
	\end{pmatrix},\quad
	\begin{pmatrix}
		X_{0}^{u} \\ \mu_{0}
	\end{pmatrix}
	=\begin{pmatrix}
		 x_{0} \\ \mu_{0}
	\end{pmatrix},
	\quad t\in[0,\infty),
\end{align}
where $\lambda \geq 0$ is the speed of mean reversion,
$\bar{\mu}\in\R$ is the long-run mean,
$\sigma_{1} > 0$ and $\sigma_{2}\geq0$ are volatility coefficients,
and $x_{0}\in\R$ is the initial state of $X^{u}$.
In the following, the \DM\ does not observe the Ornstein-Uhlenbeck process $\mu$ directly.
For that reason, we assume a strictly positive diffusion coefficient $\sigma_1>0$ 
so that the \DM\ cannot recover the hidden drift $\mu_t$ by observing the state $X^u_t$
and the control $u_t$.

We allow the \DM\ to acquire additional information on the hidden drift $\mu$ by
conducting independent {\it tests} as in \cite{moscarini2001optimal,ekstrom2024detection}
revealing additional, albeit noisy information.
To motivate our model, let us temporarily take as given an entire sequence $(W^i)_{i\in\N}$ of independent Brownian motions additionally independent of $\mu_0$.
The \DM\ can choose to conduct $n\in\N$ independent tests for $\mu$,
resulting in observations $Y^i$, $i=1,\dots,n$, with
\[
	\rd Y^{i}_{t} = \mu_{t} \rd t + \rd W^{i}_{t},\quad Y^{i}_{0}=0,
	\quad t\in[0,\infty).
\]
With this, the weighted sum of tests $\bar{Y}^n := (Y^{1},\dots,Y^{n})/\sqrt{n}$ for the drift $\mu$ is seen to satisfy
\begin{equation*}
	\rd \bar{Y}^n_{t} = \mu_{t}\sqrt{n}  \rd t + \rd \bar{W}^n_{t},\quad \bar{Y}^n_{0}=0,
	\quad t\in[0,\infty),
\end{equation*}
with $\bar{W}^n:=(W^{1} + \dots + W^{n})/\sqrt{n}$ being a one-dimensional standard Brownian motion.

In our model, we allow the \DM\ to choose the number of tests dynamically in time and in arbitrary 
quantities by replacing the fixed number $n$ by a {\it rate of information acquisition} $h:\Omega\times[0,\infty)\to [0,\infty)$, leading to the observation process
\begin{equation*}
	\rd Y_{t}^{h} = \mu_{t} \sqrt{h_{t}} \rd t + \rd B^{3}_{t},\quad Y_{0}^h=0,
	\quad t\in[0,\infty),
\end{equation*}
where $B^3$ is the Brownian motion we fixed in the beginning of this section.
We see that, heuristically, increasing the number of tests $h$ increases the signal-to-noise ratio
in $Y^h$, where $\mu\sqrt{h}$ is the signal and the Brownian motion $B^3$ is the noise.

The \DM\ is allowed to choose a {\it state control} $u$ and an {\it information acquisition rate} $h$.
More precisely, we let  $\cU:=\R\times[0,\infty)$ and define the set of
{\it pre-admissible controls} $\cA^{pre}$ as those $\F$-progressively measurable pairs
of $\cU$-valued processes $(u,h)$ such that for all $t>0$,
\begin{equation}\label{eq:pre-admissible}
  \E\left[\int_0^t \bigl|u(s)\bigr|\rd s\right] < \infty
  \qquad\text{and}\qquad
  \lim_{t\to\infty}e^{-\delta t} \E\left[(X^{u}_{t})^{2} + \int_0^t h_s \rd s\right] = 0.
\end{equation}
The state process associated with a pre-admissible control $(u,h)$ is the three-dimensional process $(X^u,\mu,Y^h)$ with dynamics
\begin{equation}\begin{split}\label{eq:state-initial}
	\rd \begin{pmatrix}
		X_{t}^{u} \\ \mu_{t} \\ Y_{t}^{h}
	\end{pmatrix}
	=\begin{pmatrix}
		\mu_{t} + u_{t} \\ \lambda(\bar{\mu}-\mu_{t}) \\ \mu_{t} \sqrt{h_{t}}
	\end{pmatrix}\rd t
	+ \begin{pmatrix}
		\sigma_{1} & 0 & 0 \\ 0 & \sigma_{2} & 0 \\ 0 & 0 & 1
	\end{pmatrix}\rd B_{t},\quad
	\begin{pmatrix}
		X_{0}^{u} \\ \mu_{0} \\ Y_{0}^{h}
	\end{pmatrix}
	=\begin{pmatrix}
		x \\ \mu_{0} \\ 0
	\end{pmatrix},
	\quad t\in[0,\infty).
\end{split}\end{equation}
The actions of the \DM are required to be based on only the information available by observing
the controlled state $X^{u}$ and the observation process $Y^{h}$.
This is formalized by restricting the set of admissible controls $\cA^{pre}$
to those which are adapted to the {\it observation filtration} $\cY^{(u,h)}$ generated by $(X^{u},Y^{h})$ 
and augmented by the $\P$-nullsets.
By definition of $\cA^{pre}$, it is clear that $\cY^{(u,h)}\subseteq\F$
for any pre-admissible control $(u,h)\in\cA^{pre}$.
With this, we define
\[
\cA := \Bigl\{(u,h)\in\cA^{pre} \;\Big|\; (u,h)\text{ is }\cY^{u,h}\text{-progressively measurable}\Bigr\}
\]
as the set of admissible controls.

\begin{remark}
  Note that some care has to be taken when defining the set of admissible controls
  as the SDE \eqref{eq:state-initial} for the state process only makes sense
  for progressively measurable controls $(u,h)$, but the filtration $\cY^{(u,h)}$ is only defined
  after the SDE has been solved.
  Similarly to \cite{cohen2023optimal}, we prevent this circular dependence between the existence
  of states and observation filtration by defining the state process first for $\F$-progressively
  measurable controls in $\cA^{pre}$ and then restricting to the $\cY^{(u,h)}$-progressively
  measurable controls in $\cA$.
  We furthermore point out that the set of controls is not very tractable as,
  for example, for $(u_1,h_1),(u_2,h_2)\in\cA$ we need not have $(u_{1}+u_{2},h_{1}+h_{2})\in\cA$.
\end{remark}

\begin{remark}\label{remark:filtration}
Observe that for any $(u,h)\in\cA^{pre}$ we can write
\[
 X^u_t - \int_0^t u_s \rd s = x + \int_0^t \mu_s\rd s + \sigma_1 B^1_t = X^0_t,\quad t\in[0,\infty),
\]
from which we conclude that $\cY^{0,h}\subseteq\cY^{u,h}$,
so the state control $u$ potentially reveals additional information on $\mu$.
However, we shall see in Lemma~\ref{lem:optimal-state-full} that for the optimal control $(u^*,h^*)$
we also have $\cY^{u^*,h^*}\subseteq\cY^{0,h^*}$, meaning that there indeed is a separation in the role
of the two controls $u^*$ and $h^*$ in the sense that the sole purpose of $u^*$ is controlling the state whereas the sole purpose of $h^*$ is the acquisition of information.
In this sense, the {\it separation principle} of stochastic control theory with partial information holds;
see \cite{wonham1968separation,bensoussan1992stochastic,georgiou2013separation}.
\end{remark}

The performance of an admissible control $(u,h)\in \cA$ is given by the infinite horizon cost functional 
\begin{align*}
J(u,h):=\E\left[\int_{0}^{\infty}e^{-\delta t}\left(
	\frac{1}{2}\big(\kappa |X^{u}_{t}|^{2} + \rho |u_{t}|^{2} \big)+ c(h_{t})\right) \rd t \right]
	\quad 	 \textnormal{subject to \eqref{eq:state-initial}},
\end{align*}
where $\kappa,\rho>0$ are the cost coefficients for state and control, respectively,
$\delta>0$ is the discount factor, and $c: [0,\infty)\rightarrow[0,\infty)$ is
the cost of information acquisition.
This functional $J$ reflects the trade-off between a quadratic cost for deviations of the state process 
$X^u$, the use of the control $u$,
and an additional cost $c$ on the information acquisition rate $h$.
In all that follows, we impose the following standing assumption on the information acquisition cost. 
\begin{assumption}\label{ass:cost}
	The function $c :[0,\infty)\to[0,\infty)$ is strictly convex,
    twice continuously differentiable, and strictly increasing.
\end{assumption}

The convexity of the cost function implies that the marginal cost of generating
additional independent tests is increasing.
This assumption is motivated, for example, by \cite{moscarini2001optimal, blatter2012costs}.
A typical example of a cost function satisfying the standing assumption is the power cost function
\begin{equation}\label{eq:cost-functions}
  c(x) = \zeta x^{1+\epsilon},\quad x\in[0,\infty),
\end{equation}
for constants $\zeta,\epsilon>0$.
The choice of $\epsilon=1$ corresponds to the special case of quadratic cost of information acquisition.

The \DM\ aims to minimize the cost functional $J(u,h)$ over all of admissible controls $(u,h)\in\cA$.
We subsequently consider the {\it partial information problem}
\begin{equation}\label{eq:original-control-problem}
	\inf_{(u,h)\in\cA} J(u,h).
\end{equation}
As highlighted in Section \ref{sec:intro}, even in the special case of quadratic information costs,
the control problem \eqref{eq:original-control-problem} does not fit within the framework
of the classical partially observed LQG control problem studied in \cite{Bensoussan_1992, wang2015linear},
since the information control $h$ affects the drift
of the state dynamics \eqref{eq:state-initial} nonlinearly.

\begin{remark}
\label{rmk:weak_strong}
    We formulate the optimization problem in the strong formulation instead of the weak formulation
    and, as a result, the observation filtration $\cY^{u,h}$ is control-dependent.
    While the two formulations are often equivalent in the sense that they lead to the same optimal cost, 
    the same cannot always be said about the existence of optimal controls,
    especially in problems with controlled observations;
    see \cite{benevs1991separation} for an intriguing example.
    In our case, however, we are able to derive a semi-explicit
    feedback map for the optimal control, regular enough so that the associated optimally controlled state process even exists in the strong sense.
    Hence, the strong and weak formulation of our problem are indeed equivalent.
\end{remark}

\section{Transformation to Full Information}\label{sec:transformation}
In this section, we embed the original partial information problem into a full information problem amenable to dynamic programming.
Note that, in the original problem, the drift coefficients of the state process
and the Brownian motion $B$ are not necessarily adapted to the observation filtration
$\cY^{u,h}$ as, generally, $\mu$ is not $\cY^{u,h}$-adapted. 

This is resolved by applying the Kalman-Bucy filter \cite[Theorem 12.7]{liptser2013statisticsII}
and augmenting the state process by the conditional mean and variance of $\mu$.
This allows us to recast the original problem as an equivalent problem under full information.
The latter problem is then formulated dynamically and solved via dynamic programming techniques. 

Following \cite{liptser2013statisticsI}, the distribution of $\mu_t$ conditional on $\cY^{u,h}_t$
is Gaussian and hence characterized by its conditional mean and variance
\[
  m_t^{u,h}:=\E\bigl[\mu_{t}\big|\cY_{t}^{u,h}\bigr]
  \quad\text{and}\quad
  \gamma_t^{u,h}:=\E\bigl[(\mu_{t}-m_{t}^{u,h})^{2}\big|\cY_{t}^{u,h}\bigr].
\]
Moreover, (suitable versions of) these conditional expectations follow the dynamics
\begin{equation*}\begin{split}
		\rd \begin{pmatrix}
			m_{t}^{u,h} \\ \gamma_{t}^{u,h}
		\end{pmatrix}
		=\begin{pmatrix}
			\lambda(\bar{\mu} - m_{t}^{u,h}) \\
			f(\gamma_{t}^{u,h},h_{t})
		 \end{pmatrix}\rd t
		+ \begin{pmatrix}
			\bar{\sigma}_{1}\gamma_{t}^{u,h} & \sqrt{h_{t}}\gamma_{t}^{u,h} \\
			0 & 0
		  \end{pmatrix}\rd I^{u,h}_{t},\quad
		\begin{pmatrix}
			m_{0}^{u,h} \\ \gamma_{0}^{u,h}
		\end{pmatrix}
		= \begin{pmatrix}
			  m_{0} \\ \gamma_{0}
		  \end{pmatrix},
		  \quad t\in[0,\infty),
\end{split}\end{equation*}
where
\[
  \bar{\sigma}_{1}:= 1 / \sigma_{1},
\]
the function $f:\R^2\to\R$ is defined as
\begin{equation}\label{eq:def-f}
	f(\gamma,h)
	:=-(\bar{\sigma}_{1}^{2}+h) \gamma^{2}
	-2\lambda\gamma
	+\sigma_{2}^{2},\quad (\gamma,h)\in\R^2,
\end{equation}
and the process $I^{u,h} = (I^{1,u,h},I^{2,u,h})$ is the two-dimensional {\it innovations process} defined as
\begin{equation}\begin{split}\label{eq:def-innovations-process}
	\begin{pmatrix}
		I^{1,u,h}_{t} \\ I^{2,u,h}_{t}
	\end{pmatrix}
	:=\begin{pmatrix}
		\bar{\sigma}_{1}(X^{u}_{t} - x - \int_{0}^{t} (m_{s}^{u,h} - u_{s})\rd s) \\
		Y^{u,h}_{t} - \int_{0}^{t} m_{s}^{u,h}\sqrt{h_{s}}\rd s
	\end{pmatrix}
	=\begin{pmatrix}
		  B^{1}_{t} - \bar{\sigma}_{1}\int_{0}^{t}(m_{s}^{u,h} - \mu_{s})\rd s \\
		  B^{3}_{t} - \int_{0}^{t}(m_{s}^{u,h} - \mu_{s})\sqrt{h_{s}}\rd s
	  \end{pmatrix},\quad t\in[0,\infty).
\end{split}\end{equation}
Note that $f$ is quadratic in $\gamma$, and hence the conditional variance
$\gamma^{u,h}$ solves a Riccati differential equation controlled by the information acquisition rate $h$. 
Moreover, the dynamics of $\gamma^{u,h}$ do not depend on the state control $u$,
so that we may subsequently write $\gamma^h$ in place of $\gamma^{u,h}$ for ease of notation.

We now recall a classical result regarding this transformation, which allows the \DM\
to replace the unobservable state $\mu$ by its conditional mean and variance $(m^{u,h},\gamma^h)$
in the state dynamics and treat any remaining uncertainty as Brownian noise
with respect to the observation filtration $\cY^{u,h}$.
The classical proof can, for example, be found in
\cite[Lemma 11.3]{liptser2013statisticsII} or \cite[Lemma 22.1.7]{cohen2015stochastic}.
\begin{lemma}\label{lem:innovations-process}
	For any admissible control $(u,h)\in\cA$,
	the innovations process $I^{u,h}$ is a two-dimensional standard
	$(\cY^{u,h},\P)$-Brownian motion.
\end{lemma}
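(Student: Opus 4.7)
The plan is to apply Lévy's characterization of multidimensional Brownian motion with respect to the observation filtration $\cY^{u,h}$. This requires verifying three properties of $I^{u,h}$: it is continuous and $\cY^{u,h}$-adapted, it is an $(\cY^{u,h}, \P)$-local martingale, and its quadratic covariation matrix is $\diag(1,1)\,t$. Adaptedness and continuity are immediate from the first representation in \eqref{eq:def-innovations-process}: $X^u$ and $Y^h$ are $\cY^{u,h}$-adapted by construction of the observation filtration, the controls $u,h$ are $\cY^{u,h}$-progressively measurable by admissibility, and $m^{u,h}$ is $\cY^{u,h}$-adapted by construction of the Kalman--Bucy filter.

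For the martingale property I would use the second representation. Writing
\[
  I^{1,u,h}_t - I^{1,u,h}_s = (B^1_t - B^1_s) - \bar{\sigma}_1 \int_s^t (m^{u,h}_r - \mu_r)\,\rd r,
\]
and taking $A \in \cY^{u,h}_s \subseteq \F_s$, the Brownian increment has zero expectation against $\ind_A$ because $B^1$ is an $(\F,\P)$-martingale. The drift correction vanishes by Fubini and the tower property: for each $r \in [s,t]$ we have $\ind_A \in \cY^{u,h}_r$ and $\E[\mu_r \mid \cY^{u,h}_r] = m^{u,h}_r$, so $\E[\ind_A(m^{u,h}_r - \mu_r)] = 0$. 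The identical argument with $B^3$ in place of $B^1$ handles $I^{2,u,h}$.

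For the quadratic covariation, observe from the second representation that each component of $I^{u,h}$ differs from a Brownian motion by an absolutely continuous process, which contributes nothing to the quadratic variation. Hence $\langle I^{1,u,h}\rangle_t = \langle B^1\rangle_t = t$, $\langle I^{2,u,h}\rangle_t = \langle B^3\rangle_t = t$, and $\langle I^{1,u,h}, I^{2,u,h}\rangle_t = \langle B^1, B^3\rangle_t = 0$ by independence of $B^1$ and $B^3$ under $\P$. Lévy's characterization applied on $\cY^{u,h}$ then identifies $I^{u,h}$ as a two-dimensional standard $(\cY^{u,h},\P)$-Brownian motion.

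The only technical subtlety is justifying Fubini and integrability of $\int_s^t |m^{u,h}_r - \mu_r|\,\rd r$, which follows from standard $L^2$ moment bounds for the Ornstein--Uhlenbeck process $\mu$ and hence for its conditional projection $m^{u,h}$. The result is classical and the authors cite the standard filtering references; the outline above just records the verification needed to confirm it in the present setup.
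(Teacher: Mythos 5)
The paper does not give its own proof of this lemma; it cites the classical references \cite[Lemma~11.3]{liptser2013statisticsII} and \cite[Lemma~22.1.7]{cohen2015stochastic}. Your outline reproduces exactly the textbook argument behind those citations: read adaptedness and continuity off the first representation in \eqref{eq:def-innovations-process}, verify the martingale property from the second representation via the tower property $\E[\mu_r\,|\,\cY^{u,h}_r]=m^{u,h}_r$, note that the drift corrections are absolutely continuous so the quadratic covariation of $I^{u,h}$ agrees with that of $(B^1,B^3)$, and close with L\'evy's characterization in the filtration $\cY^{u,h}$. One small point worth spelling out: for the second component the drift integrand carries the extra factor $\sqrt{h_r}$, so the vanishing of $\E[\ind_A\sqrt{h_r}(m^{u,h}_r-\mu_r)]$ for $A\in\cY^{u,h}_s$ relies on $\sqrt{h_r}$ being $\cY^{u,h}_r$-measurable, which holds precisely because $(u,h)\in\cA$ is $\cY^{u,h}$-progressively measurable; this is where admissibility enters and the argument is genuinely control-dependent, not just the classical uncontrolled innovations theorem verbatim. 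With that caveat made explicit, the proof is correct and coincides with the approach the paper implicitly endorses by citation.
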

 
In light of the previous result and since the observation process $Y^{u,h}$ has the sole purpose
of generating information, but neither affects the dynamics of $X^u$ nor the cost functional $J$ directly,
the state of the system is fully described by the process $Z^{u,h}:=(X^{u},m^{u,h},\gamma^{h})$.
The main concern in the study of existence of $Z^{u,h}$ is whether the conditional variance
$\gamma^h$ blows up.
The following lemma shows that the conditional variance is uniformly bounded over all admissible controls. 

\begin{lemma}\label{lem:gamma-bounded}
For any initial value $\gamma_0\in[0,\infty)$, the uncontrolled conditional variance $\gamma^{0}$ exists as a continuously differentiable solution of
\[
    \rd\gamma^0_t = f(\gamma^0_t,0)\rd t
	,\quad \gamma^0_0 = \gamma_0,\quad t\in[0,\infty).
\]
Moreover, $\gamma^0$ is bounded and converges for $t\to\infty$ to the unique stable equilibrium
\begin{equation}\label{eq:gamma-infty}
	\gamma^{0}_{\infty}
	:= -\lambda\sigma_{1}^{2} + \sqrt{\lambda^{2}\sigma_{1}^{4} + \sigma_{1}^{2}\sigma_{2}^{2}} \geq 0.
\end{equation}
Finally, for all $(u,h)\in\cA^{pre}$, $\gamma^h$ exists as an absolutely continuous function satisfying
\[
  \rd\gamma^h_t = f(\gamma^h_t,h_t)\rd t,
  \quad \gamma^h_0 = \gamma_0,\quad t\in[0,\infty),
\]
almost everywhere and it holds that
\begin{equation}\label{eq:gamma-max}
  0\leq \gamma_{t}^{h}\leq \gamma_t^0 \leq \max\{\gamma^{0}_{\infty}, \gamma_{0}\},\quad t\in[0,\infty).
\end{equation}
\end{lemma}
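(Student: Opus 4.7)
The plan is to handle the autonomous case $h\equiv 0$ first, and then to recover the general case by comparison against it. For $h\equiv 0$, the equation $\dot\gamma=f(\gamma,0)=-\bar\sigma_1^{2}\gamma^2-2\lambda\gamma+\sigma_2^2$ has a polynomial right-hand side, so Picard--Lindel\"of yields a unique local $C^1$ solution starting from $\gamma_0$. A direct root computation shows that $f(\argdot,0)$ has exactly the non-negative root $\gamma^0_\infty$ from \eqref{eq:gamma-infty} and a non-positive one; being a downward-opening parabola in $\gamma$, it is non-negative on $[0,\gamma^0_\infty]$ and strictly negative on $(\gamma^0_\infty,\infty)$. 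Consequently $[0,\max\{\gamma^0_\infty,\gamma_0\}]$ is forward-invariant, which precludes blow-up and extends the local solution uniquely to all of $[0,\infty)$. Within this invariant interval the sign of $\dot\gamma^0$ is constant, so $\gamma^0$ is monotone and, by boundedness, convergent; the limit must be a fixed point of $f(\argdot,0)$ in $[0,\infty)$ compatible with monotone convergence, hence equal to $\gamma^0_\infty$.

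For general $(u,h)\in\cA^{pre}$, the first step is to fix $\omega$ in a full-measure event on which $h(\omega,\argdot)$ is locally integrable; such an event exists because \eqref{eq:pre-admissible} implies $\E[\int_0^T h_s\rd s]<\infty$ for every $T>0$. For such $\omega$ the map $(t,\gamma)\mapsto f(\gamma,h_t)$ is a Carath\'eodory function, bounded on every strip $\{\gamma\in[-K,K]\}$ by the locally integrable function $(\bar\sigma_1^{2}+h_t)K^2+2\lambda K+\sigma_2^2$ and locally Lipschitz in $\gamma$ with locally integrable Lipschitz constant. Carath\'eodory's theorem therefore produces a unique maximal absolutely continuous $\gamma^h$ satisfying $\dot\gamma=f(\gamma,h_t)$ almost everywhere on its existence interval.

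The two-sided bound $0\le\gamma^h\le\gamma^0$ on this interval then comes from comparison. For the upper bound, set $\psi:=\gamma^0-\gamma^h$; using the identity $f(\gamma,0)-f(\gamma,h)=h\gamma^2$, a short computation gives
\[
  \dot\psi+\bigl[\bar\sigma_1^{2}(\gamma^0+\gamma^h)+2\lambda\bigr]\psi=h_t\,(\gamma^h)^2\ge 0,\qquad \psi(0)=0,
\]
so the integrating-factor formula yields $\psi\ge 0$. For non-negativity, suppose for contradiction that $\gamma^h$ dips below zero, let $t_*$ be the largest preceding zero crossing, and work on a right neighborhood $(t_*,t_*+\eps]$ on which $\gamma^h<0$. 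There the identity $-2\lambda\gamma^h+\sigma_2^2\ge 0$ reduces the integral form of the ODE to the Gronwall-type bound $|\gamma^h(t)|\le M\int_{t_*}^t(\bar\sigma_1^{2}+h_s)|\gamma^h(s)|\rd s$, where $M$ is a local supremum of $|\gamma^h|$; local integrability of $\bar\sigma_1^{2}+h$ then forces $\gamma^h\equiv 0$ on this interval, a contradiction. These a priori bounds rule out blow-up and extend the maximal solution to $[0,\infty)$, with the final inequality $\gamma^0_t\le\max\{\gamma^0_\infty,\gamma_0\}$ supplied by the first part.

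The main obstacle is that $h$ is only locally integrable rather than bounded, so one cannot apply classical (continuous-coefficient) ODE theory and must work throughout with the Carath\'eodory version, paying particular attention in the Gronwall step for non-negativity, where $\bar\sigma_1^{2}+h_s$ enters as an unbounded but integrable weight. Once those regularity subtleties are settled, the remaining content reduces to routine linear-ODE manipulations and one-dimensional phase-line analysis.
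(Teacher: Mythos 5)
Your proof is correct and follows essentially the same route as the paper's: phase-line analysis of $f(\argdot,0)$ for the $h\equiv 0$ case, then comparison to obtain $0\le\gamma^h\le\gamma^0$ and extend to $[0,\infty)$. You supply considerably more detail than the paper -- in particular spelling out the Carath\'eodory existence theory for merely locally integrable $h$, the integrating-factor computation for the upper comparison, and an explicit Gronwall argument for non-negativity (where the paper simply invokes $f(0,h)\ge 0$) -- but the underlying strategy is the same.
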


\begin{proof}
First, observe that
\[
  f(\gamma,0) = 0\text{ for }\gamma\in[0,\infty)
  \qquad \text{if and only if} \qquad
  \gamma = -\lambda\sigma_{1}^{2} + \sqrt{\lambda^{2}\sigma_{1}^{4} + \sigma_{1}^{2}\sigma_{2}^{2}} = \gamma^{0}_{\infty},
\]
that is, $\gamma^{0}_{\infty}$ is the unique positive root of the quadratic function
$\gamma\mapsto f(\gamma,0)$.
From this, it also follows that $f(\gamma,0)\leq 0$ for all $\gamma\geq \gamma_\infty^0$.
Together with $f(0,0) = \sigma_2^2 \geq 0$ and the local Lipschitz continuity of $f$
we obtain the existence of a unique non-negative solution $\gamma^0$ bounded from above by
$\max\{\gamma^{0}_{\infty}, \gamma_{0}\}$.
Finally, the existence of $\gamma^h$ and the inequality $\gamma^h\leq\gamma^0$ follow
from the observation that $h\mapsto f(\gamma,h)$ is non-increasing, thus upper bounded by
$f(\gamma,0)$, and conversely, $f(0,h)\geq0$ for all $h\geq 0$.
Finally, the convergence of $\gamma^0_t$ to the equilibrium $\gamma^0_\infty$ as $t\to\infty$
is a standard result; see \cite[Remark 2.17, Theorem 2.44]{grass2008optimal}. 
\end{proof}

\begin{remark}\label{rem:negative-h}
If we replace the lower bound $h \geq 0$ with a (possibly negative) lower bound of the form
$h \geq -\bar{\sigma}_1^2 + \nu$ for some $\nu>0$, then $\gamma^h$ still exists and is bounded.
Indeed, in that case we have
\[
  f(0,h) = \sigma_2^2 \geq 0
  \quad\text{and}\quad
  f(\gamma,h) \leq f(\gamma,-\bar{\sigma}_1^2+\nu)
\]
for all $\gamma\in[0,\infty)$ and $h\geq-\bar{\sigma}_1^2+\nu$.
From this, it follows that $0\leq \gamma^h\leq \gamma^\nu$, where $\gamma^\nu$ is the bounded solution
of the differential equation with $h\equiv 0$ and $\bar{\sigma}_1^2$ replaced by $\nu$.
\end{remark}

In what follows, we fix a constant $\gamma_{\max} > \gamma^0_\infty$.
From the previous lemma, it follows that for any initial condition $\gamma_0\in[0,\gamma_{\max}]$
and any admissible information acquisition rate $h$,
the corresponding conditional variance process $\gamma^h$ never exits the interval $[0,\gamma_{\max}]$.
In particular, we may subsequently take the state space of $\gamma^h$ to be the compact interval $[0,\gamma_{\max}]$.

With this, we are now finally in a position to formulate the transformed
full information problem dynamically.
We begin by introducing the augmented state space $\S:=\R^{2}\times[0,\gamma_{\max}]$.
Next, for any control $(u,h)\in\cA$ and initial condition $z=(x,m,\gamma)\in\S$, the state process $Z^{u,h} = Z^{u,h;z}$ with $Z^{u,h}=(X^u,m^{u,h},\gamma^h)$ is defined as the unique $\S$-valued strong solution of
\begin{equation}\begin{split}\label{eq:state-aux}
		\rd Z_{t}^{u,h}
		=\begin{pmatrix}
			m^{u,h}_{t} + u_{t}\\
			\lambda\bigl(\bar{\mu} - m^{u,h}_{t}\bigr) \\
			f(\gamma_{t}^{h},h_{t})
		\end{pmatrix}\rd t
		+\begin{pmatrix}
			\sigma_{1} & 0 \\
			\bar{\sigma}_{1}\gamma_{t}^{h} & \sqrt{h_{t}}\gamma_{t}^{h}
			\\ 0 & 0
		\end{pmatrix}\rd I^{u,h}_{t},\quad
		Z_{0}^{u,h}=z,
		\quad t\in[0,\infty).
\end{split}\end{equation}
Here, existence of a solution is guaranteed by Lemma~\ref{lem:gamma-bounded} and the linearity of the drift coefficient of $Z^{u,h}$ in the first two components.
We furthermore note that, despite of what the dynamics specified in \eqref{eq:state-aux} suggest, the first component $X^u$ of $Z^{u,h}$ does not depend on the information acquisition rate $h$ since
\begin{equation*}
  \rd X^u_t = (m^{u,h}_{t} + u_{t})\rd t + \sigma_1\rd I^{1,u,h}_t
  = (\mu_{t} + u_{t})\rd t + \sigma_1\rd B^{1}_{t},\quad t\in[0,\infty),
\end{equation*}
by definition of $I^{1,u,h}$.
This justifies writing $X^u$ in place of $X^{u,h}$.

With this, the cost functional $\cJ:\cA\times\S \to[0,\infty)$ of the full information problem is defined as
\begin{align*}
	\cJ(u,h;z)=\E\Bigl[\int_{0}^{\infty}e^{-\delta t}\Bigl(
	\frac{1}{2}\big(\kappa |X^{u;z}_{t}|^{2} + \rho u_{t}^{2} \big)+ c(h_{t})\Bigr) \rd t \Bigr]
	\quad \text{subject to }\eqref{eq:state-aux}.
\end{align*}
and the value function $V:\S\to[0,\infty)$ of the {\it full information problem} is given by
\begin{equation}\label{eq:FI-value-function}
	V(z):=\inf_{(u,h)\in\cA}\cJ(u,h;z).
\end{equation}

\begin{remark}\label{rem:equivalent-problem}
The original partial information problem \eqref{eq:original-control-problem} can be embedded
into the full information problem \eqref{eq:FI-value-function} as the cost functionals are linked
via $\cJ((u,h);(x,m_{0},\gamma_{0}))=J(u,h)$ and the set of admissible controls $\cA$ is the same
for both problems.
Hence, solving the full information problem also solves the partial information problem.
\end{remark}

\section{Reduction to a Deterministic Control Problem}\label{sec:reduction}
We solve the full information problem by constructing a classical solution of the associated Hamilton--Jacobi--Bellman (HJB) equation.
The key observation in the construction of the classical solution is that the dimension of the space can be reduced to one dimension,
resulting in a deterministic control problem for the conditional variance $\gamma^h$.
The reduction is carried out in this section.

To begin with, observe that the infinitesimal generator of the state process takes the form
\begin{equation}\label{eq:differential-operator}
	\cL^{u,h}\phi
	:= (m + u)\phi_{x}
		+\lambda(\bar{\mu}-m)\phi_{m}
		+f(\gamma,h)\phi_{\gamma}
		+ \frac{\sigma_{1}^{2}}{2}\phi_{xx}
		+\frac{1}{2}\gamma^{2}(\bar{\sigma}_{1}^{2} + h)\phi_{mm}
		+\gamma \phi_{xm}\quad\text{on }\S
\end{equation}
for any $(u,h)\in\cU$ and any sufficiently regular function $\phi:\S\to\R$.
With this, the degenerate elliptic, fully nonlinear HJB equation of the full information problem reads
\begin{equation}\label{eq:HJB}\tag{$\text{HJB}_{\text{full}}$}\begin{split}
	-\delta V + \inf_{(u,h)\in\cU}\Bigl\{
	\cL^{u,h}V + \frac{1}{2}\bigl(\kappa x^{2} + \rho u^{2}\bigr) + c(h)\Bigr\} = 0\qquad\text{on }\S.
\end{split}\end{equation}
Formally minimizing with respect to the control variable $u$ leads to a candidate optimizer
\[
  u^{*}=-\frac{V_{x}}{\rho},
\]
and plugging this candidate optimizer back into the HJB equation leads to
\begin{align*}
	-\delta V + \inf_{h\in[0,\infty)}\bigg\{ -\frac{V_{x}^{2}}{2\rho} + mV_{x}
	&	+\lambda(\bar{\mu}-m)V_{m}
		+\frac{\sigma_{1}^{2}}{2}V_{xx}
		+\gamma V_{xm}+\frac{\kappa x^{2}}{2}\\
   &\qquad +f(\gamma,h)V_{\gamma} +\frac{1}{2}\gamma^{2}\bigl(\bar{\sigma}_{1}^{2} + h\bigr)V_{mm} + c(h)\bigg\} =0.
\end{align*}
This suggests to make an ansatz for the value function via the mapping $W:\S\to\R$ given by
\begin{equation}\label{eq:ansatz}
  W(x,m,\gamma) := a_{1}x^{2} + a_{2}m^{2} + a_{3}xm + b_{1}x + b_{2}m + \bar{w}(\gamma),
\end{equation}
for $a_{1},a_{2},a_{3},b_{1},b_{2}\in\R$ and a continuously differentiable function $\bar{w}:[0,\gamma_{\max}]\to[0,\infty)$ to be determined.
Plugging $W$ into the HJB equation and separating variables yields the system of equations
\begin{align*}
	& 0 = -\frac{2a_{1}^{2}}{\rho} + \frac{\kappa}{2}-\delta a_{1}, &
	& 0 = a_3 - \frac{a_{3}^{2}}{2\rho} - 2\lambda a_{2}-\delta a_{2}, &
	& 0 = 2a_{1} - \frac{2a_{1}a_{3}}{\rho} -\lambda a_{3}-\delta a_{3},\\
	& 0 = \lambda \bar{\mu} a_{3} - \frac{2a_{1}b_{1}}{\rho} -\delta b_{1}, &
	& 0 = b_1 - \frac{a_{3}b_{1}}{\rho} + 2\lambda \bar{\mu} a_{2}-\lambda b_{2}-\delta b_{2}.
\end{align*}
Here, the first three equations characterize $a_1$ to $a_3$, whereas the last two equations determine $b_1$ and $b_2$.
An explicit solution is given by
\begin{align}\label{eq:ansatz-coefficients}
	& a_{1}=-\frac{\delta \rho}{4}+\frac{\sqrt{\delta^{2} \rho^{2} + 4 \kappa \rho}}{4}, &
	& a_{2}=\frac{a_{3} (2 \rho-a_{3})}{2 \rho (2 \lambda + \delta )}, &
	& a_{3}=\frac{2 a_{1} \rho}{\delta \rho+\lambda \rho + 2 a_{1}}, \\
	& b_{1}=\frac{\lambda \bar{\mu} a_{3} \rho}{\delta \rho + 2 a_{1}}, &
	& b_{2}=\frac{2 \lambda \bar{\mu} a_{2} \rho-b_{1} a_{3}+b_{1} \rho}{\rho (\lambda + \delta)}.\notag
\end{align}
Let us point that the solution of the quadratic equation determining $a_1$ is chosen such that $a_1>0$. This guarantees in particular that $W$ is convex in $x$ as it should be.
In fact, the Hessian of $W$ with respect to $(x,m)$ is given by
\[
  A := \begin{pmatrix}2a_1 & a_3\\ a_3 & 2a_2\end{pmatrix}
\]
and since $a_1>0$ and
\[
  \det[A] = 4a_1a_2 - a_3^2 = \frac{(\delta \rho - \sqrt{\delta^2 \rho^2 + 4k\rho})^2
    \rho(\delta\rho + \sqrt{\delta^2\rho^2 + 4k\rho})}{2(\delta + 2\lambda)
    (\sqrt{\delta^2\rho^2 + 4k\rho} + (\delta + 2\lambda)\rho)^2} > 0,
\]
it follows that $A$ is positive definite and hence $W$ is strictly convex in $(x,m)$.
With the constants $a_1,a_2,a_3,b_1,b_2$ fixed and upon setting $C_1 := \sigma_{1}^{2}a_{1} + \lambda \bar{\mu} b_{2} -b_{1}^{2}/({2\rho})$, the HJB equation reduces to
\begin{equation}\label{eq:HJB-reduced-problem-unreduced}
	-\delta\bar{w} + \inf_{h\in[0,\infty)}\Bigl\{
	f(\gamma,h)\bar{w}' + C_1 + a_{3} \gamma
	+(\bar{\sigma}_{1}^{2} + h)a_{2}\gamma^{2}
	+c(h)\Bigr\}=0\qquad\text{on }[0,\gamma_{\max}],
\end{equation}
which is now formally a nonlinear first-order ordinary differential equation for $\bar{w}$.
Next, we propose another ansatz for $\bar{w}$ given by
\begin{equation}\label{eq:ansatz-w}
 \bar{w}(\gamma) = w(\gamma) + a_{2}\gamma
	+ \frac{1}{\delta}\bigl(C_{1} + a_{2}\sigma_{2}^{2}\bigr),\quad \gamma\in[0,\gamma_{\max}],
\end{equation}
for another continuously differentiable function $w:[0,\gamma_{\max}]\to[0,\infty)$ which
is to be determined.
Using this ansatz and setting
\begin{equation}\label{eq:relation-a2-a3}
	\bar{a} := a_{3} - a_{2}(2\lambda + \delta) = \frac{a_{3}^{2}}{2\rho} > 0,
\end{equation}
the HJB equation reduces further to
\begin{equation}\label{eq:HJB-reduced-problem}\tag{$\text{HJB}_{\text{det}}$}
	- \delta w + \inf_{h\in[0,\infty)}\bigl\{
	f(\gamma,h)w' + \bar{a}\gamma + c(h)\bigr\} = 0\quad\text{on }[0,\gamma_{\max}]
\end{equation}
to be solved for $w$.
Observe that \eqref{eq:HJB-reduced-problem} is a Hamilton--Jacobi--Bellman equation
connected to a deterministic control problem with the controlled state being
the conditional variance $\gamma^h$.
Using this control interpretation, we proceed to construct a classical solution
of \eqref{eq:HJB-reduced-problem} which, in light of \eqref{eq:ansatz} and \eqref{eq:ansatz-w},
induces a classical solution of the HJB equation \eqref{eq:HJB} of the full information problem.

We proceed by formulating the deterministic control problem.
For this, we first recall that the cost function $c$ is assumed to be strictly increasing and strictly convex (see Assumption \ref{ass:cost}).
This implies, in particular, that the derivative $c'$ is strictly increasing and hence invertible.
With this, it follows that the positive constants
\begin{equation}\label{eq:constants}
	L_{v} := \frac{\bar{a}}{\delta}, \qquad
	M_{0} := \gamma_{\max}^{2} L_{v}, \qquad
	h_{\max} :=(c')^{-1}(M_{0})
\end{equation}
are well-defined.
The constant $L_v$ will turn out to be a Lipschitz constant for the value function of the deterministic control problem,
whereas $h_{\max}$ will serve as an upper bound on the optimal information acquisition rate.
In light of this and since \eqref{eq:HJB-reduced-problem} suggests that the control problem is
deterministic, we subsequently work with a reduced set of admissible information acquisition rates given by
\[
    \cA^{obs}_{det}:=\Bigl\{h:[0,\infty)\to[0,h_{\max}] \;\Big|\;
	h \text{ is Borel-measurable}\Bigr\}.
\]
Observe that Lemma~\ref{lem:gamma-bounded} is still applicable and hence the state equation
\begin{equation}\label{eq:SEE}
  \rd \gamma_t^{h} = f(\gamma_t^{h},h_t)\rd t,
  \quad \gamma_0^{h} = \gamma,\quad t\in[0,\infty),
\end{equation}
admits an absolutely continuous solution $\gamma^{h}=\gamma^{h;\gamma}$ for all $h\in\cA^{obs}_{det}$
and $\gamma\in[0,\gamma_{\max}]$.
Next, we introduce the auxiliary cost function $k:[0,\gamma_{\max}]\times[0,h_{\max}]\to[0,\infty)$ given by
\[
  k(\gamma,h) := \bar{a}\gamma + c(h),
\]
and, with this, consider the cost functional $\cJ_{det}:\cA^{obs}_{det}\times[0,\gamma_{\max}]\to[0,\infty)$ with
\[
	\cJ_{det}(h;\gamma):= \int_{0}^{\infty}
	e^{-\delta t}k(\gamma^{h;\gamma}_{t},h_{t})\rd t\quad \text{subject to }\eqref{eq:SEE}.\\
\]
The value function $v:[0,\gamma_{\max}]\to[0,\infty)$ of this {\it reduced problem} is defined as
\begin{equation}\label{eq:def-reduced-value-function}
	v(\gamma):= \inf_{h\in\cA^{obs}_{det}}\cJ_{det}(h;\gamma).
\end{equation}
Hence, we subsequently consider a nonlinear deterministic control problem on the compact state space 
$[0,\gamma_{\max}]$ with compact control set $[0,h_{\max}]$ and regular coefficient and cost functions.

\section{Solution of the Reduced and Full Information Problem}\label{sec:reduced-control-problem}
In this section, we solve both the reduced and full information control problem.
We first establish existence of an optimal control for the reduced problem
and then study the properties of the value function
which eventually allow us to conclude that $v$ defined in \eqref{eq:def-reduced-value-function} is continuously differentiable.
This regularity leads us to define a candidate feedback map
which we prove to induce an optimal control in a verification theorem.

\subsection{Existence of Optimal Controls and Value Function Properties}
The existence of an optimal control considerably simplifies the regularity analysis.
As we have restricted the controls to take values in a compact interval and the cost function is sufficiently regular,
we can apply a classical result to obtain existence of an optimal control.

\begin{theorem}\label{thm:existence}
    For every initial condition $\gamma\in[0,\gamma_{\max}]$,
    there exists an optimal continuous control $h^{*}\in\cA^{obs}_{det}$ for $v(\gamma)$.
\end{theorem}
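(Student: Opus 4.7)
The plan is to apply the direct method in the calculus of variations, leveraging the compactness of both the state space $[0,\gamma_{\max}]$ (via Lemma~\ref{lem:gamma-bounded}) and the admissible control values $[0,h_{\max}]$, the affine dependence of $f$ on $h$, and the convexity of $c$. Together these ingredients put us in a standard compactness-plus-lower-semicontinuity setting.

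Concretely, I would fix $\gamma\in[0,\gamma_{\max}]$ and take a minimizing sequence $(h_n)\subset\cA^{obs}_{det}$ with $\cJ_{det}(h_n;\gamma)\to v(\gamma)$. The trajectories $\gamma^{h_n}$ remain in $[0,\gamma_{\max}]$ with uniformly bounded derivatives, so a diagonal Arzel\`a--Ascoli argument extracts a subsequence converging uniformly on compacta to some continuous $\gamma^*$. Boundedness of $(h_n)$ in $L^\infty$ and Banach--Alaoglu then yield a further subsequence converging weak-$*$ to some $h^*$ with values in $[0,h_{\max}]$ a.e. The affine form $f(\gamma,h)=-(\bar\sigma_1^2+h)\gamma^2-2\lambda\gamma+\sigma_2^2$ allows passage to the limit in the state equation: the only nonlinear product $h_n(s)(\gamma^{h_n}_s)^2$ converges against $L^1$ test functions by combining uniform convergence of the continuous factor $(\gamma^{h_n})^2$ with weak-$*$ convergence of $h_n$, identifying $\gamma^*=\gamma^{h^*}$. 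For the cost, the term $\int_0^\infty e^{-\delta t}\bar a\gamma^{h_n}_t\rd t$ converges by dominated convergence, while $\int_0^\infty e^{-\delta t}c(h_n(t))\rd t$ is weak-$*$ lower semicontinuous by convexity of $c$ (a standard Ioffe--Tonelli result). Combining these, $\cJ_{det}(h^*;\gamma)\leq\liminf_n\cJ_{det}(h_n;\gamma)=v(\gamma)$, and $h^*$ is optimal.

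The main obstacle is promoting the optimal $h^*$ from merely measurable to continuous. The strict convexity of $c$ and linearity of $f$ in $h$ render the Hamiltonian $h\mapsto-h\gamma^2 p+c(h)$ strictly convex, with unique minimizer
\[
  h_{\min}(\gamma,p)=\min\!\bigl\{h_{\max},\max\{0,(c')^{-1}(\gamma^2 p)\}\bigr\}
\]
depending continuously on $(\gamma,p)$. Invoking the infinite-horizon Pontryagin maximum principle (the discount $\delta>0$ supplying the necessary transversality at infinity) produces a continuous adjoint $p^*$ along $\gamma^*$, and any optimal control must coincide almost everywhere with the continuous function $t\mapsto h_{\min}(\gamma^*(t),p^*(t))$, yielding the continuous representative. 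An alternative route, perhaps cleaner for this setup, is to approximate by finite-horizon problems, where parabolic HJB regularity produces smooth feedback and hence continuous controls, and then pass to the limit using the same compactness machinery.
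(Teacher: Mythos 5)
Your argument is correct and in substance matches the paper's: the paper simply verifies the hypotheses of a packaged existence theorem (Bardi--Capuzzo-Dolcetta, Cor.~VI.1.4) whose proof is precisely the compactness-plus-convexity direct method you unpack (Arzel\`a--Ascoli for the state, weak-$*$ compactness for the controls, affinity of $f$ in $h$ to pass to the limit in the dynamics, convexity of $c$ for lower semicontinuity of the cost), and then cites Grass et al.\ for continuity of the optimal control via regularity of the Hamiltonian, which is exactly your Pontryagin/unique-continuous-minimizer argument. Your version is thus a self-contained expansion of the same route, not a different one.
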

\begin{proof}
    We check the assumptions of \cite[Corollary VI.1.4]{bardi1997optimal}.
    First, $f$ and $k$ are continuous, and the control set $[0,h_{\max}]$ is compact.
    The Lipschitz continuity of $f$ and $k$ in space uniformly in the control variable is immediate from 
    the differentiability of $f,k$ and the compactness of $[0,\gamma_{\max}]\times[0,h_{\max}]$.
    This also implies the boundedness of $k$.
    THe positivity of the discount rate $\delta$ is by definition.
    Next, $h\mapsto f(\gamma,h),k(\gamma,h)$ are continuous for all $\gamma\in[0,\gamma_{\max}]$ and thus both $f(\gamma,[0,h_{\max}])$ and $k(\gamma,[0,h_{\max}])$ are intervals.
    This shows that the set
    $f(\gamma,[0,h_{\max}]) \times k(\gamma,[0,h_{\max}])$
    is convex for all $\gamma\in[0,\gamma_{\max}]$.
    With this, the existence of an optimal control $h^*$ follows from 
    \cite[Corollary VI.1.4]{bardi1997optimal}.
    Finally, the continuity of $h^*$ is a consequence of the regularity of the
    Hamiltonian $(\gamma,h,p)\mapsto f(\gamma,h)p + k(\gamma,h)$ implied by the
    strict convexity and the twice continuous differentiability of $c$;
    see \cite[Definition 3.19, Proposition 3.22]{grass2008optimal}.
\end{proof}

Having established the existence of an optimal control,
we can now proceed to study several properties of the value function $v$.
In particular, we show that $v$ is strictly concave, strictly increasing,
and continuously differentiable.
The main tool in proving these results is the observation that the conditional variance is at least 
twice continuously differentiable with respect its initial value, as shown in the following lemma.

\begin{lemma}\label{lem:regularity-state}
Let $h\in\cA^{obs}_{det}$ be continuous and $\gamma_0\in[0,\gamma_{\max}]$.
Then
\[
 \beta_t := \frac{\partial}{\partial \gamma_0}\gamma^{h;\gamma_0}_t
 \quad\text{and}\quad
 \eta_t := \frac{\partial^2}{\partial \gamma_0^2}\gamma^{h;\gamma_0}_t
\]
exist for all $t\in[0,\infty)$.
Moreover, $\beta$ is strictly positive and bounded independently of $(t,\gamma_0,h)$,
whereas $\eta$ is strictly negative for $t>0$ and satisfies the lower bound
\[
  \eta_t \geq -\frac{R_1}{R_2}\Bigl(1 - e^{-R_2t}\Bigr),\quad t\in[0,\infty),
\]
with $R_1:=2(\bar\sigma_1^2+h_{\max})$ and $R_2:=2(\bar\sigma_1^2+h_{\max})\gamma_{\max} + \lambda$.
\end{lemma}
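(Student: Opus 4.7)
The plan is to obtain existence and properties of $\beta$ and $\eta$ from the standard variational (sensitivity) equations associated with the ODE $\dot\gamma^{h;\gamma_0}_t = f(\gamma^{h;\gamma_0}_t, h_t)$, and then analyze those linear scalar ODEs directly. First I would invoke the classical smooth-dependence-on-initial-data theorem for ODEs: since $f$ is polynomial in $\gamma$ and $h$ is continuous in $t$, the map $\gamma_0 \mapsto \gamma^{h;\gamma_0}_t$ is twice continuously differentiable for every $t \geq 0$, so $\beta_t$ and $\eta_t$ exist. Differentiating the state ODE once and twice in $\gamma_0$ and using $\partial_\gamma f(\gamma,h) = -2(\bar\sigma_1^2+h)\gamma - 2\lambda$ and $\partial_\gamma^2 f(\gamma,h) = -2(\bar\sigma_1^2+h)$, I obtain the linear scalar ODEs
\begin{align*}
  \dot\beta_t &= -\bigl(2(\bar\sigma_1^2+h_t)\gamma_t^h + 2\lambda\bigr)\beta_t, & \beta_0 &= 1,\\
  \dot\eta_t &= -\bigl(2(\bar\sigma_1^2+h_t)\gamma_t^h + 2\lambda\bigr)\eta_t - 2(\bar\sigma_1^2+h_t)\beta_t^2, & \eta_0 &= 0,
\end{align*}
both of which have non-positive linear coefficient, which is what makes the analysis tractable.

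Solving the $\beta$-equation explicitly and using Lemma \ref{lem:gamma-bounded} together with $h_s,\lambda\geq 0$ gives
\[
  \beta_t = \exp\Bigl(-\int_0^t \bigl[2(\bar\sigma_1^2+h_s)\gamma_s^h + 2\lambda\bigr]\,ds\Bigr) \in (0,1],
\]
which yields both the strict positivity and the uniform upper bound on $\beta$, independent of $(t,\gamma_0,h)$. Plugging this into the variation-of-parameters formula for $\eta$ (using $\eta_0 = 0$) yields the representation
\[
  \eta_t = -2\beta_t\int_0^t (\bar\sigma_1^2+h_s)\beta_s\,ds,
\]
and since the integrand is strictly positive for all $s$, this immediately gives $\eta_t < 0$ for $t > 0$.

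For the quantitative lower bound, I set $\zeta := -\eta \geq 0$, which solves
\[
  \dot\zeta_t = 2(\bar\sigma_1^2+h_t)\beta_t^2 - \bigl[2(\bar\sigma_1^2+h_t)\gamma_t^h + 2\lambda\bigr]\zeta_t, \quad \zeta_0 = 0,
\]
and apply the pointwise bounds $\beta_t^2 \leq 1$, $h_t \leq h_{\max}$, and $\gamma_t^h \leq \gamma_{\max}$ (the last from Lemma \ref{lem:gamma-bounded}) to produce a differential inequality of the form $\dot\zeta_t \leq R_1 - R_2 \zeta_t$. Comparing to the explicit solution $\xi_t = (R_1/R_2)(1-e^{-R_2 t})$ of the corresponding linear ODE $\dot\xi = R_1 - R_2\xi$, $\xi_0 = 0$, a standard Gronwall argument yields $\zeta_t \leq \xi_t$, which is exactly the claimed lower bound on $\eta_t$.

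The point I expect requires the most care is the bookkeeping in the final step: choosing where to apply the bound on $\gamma^h$ (which controls the linear coefficient in the $\zeta$-equation) versus where to apply the bound on $h$ (which controls the inhomogeneous source term), so that $R_2$ comes out in precisely the stated form. Everything else reduces to explicit integration of two scalar linear ODEs and one elementary comparison, so no genuine obstacle remains once the variational equations are in hand.
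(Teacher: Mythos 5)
Your proposal takes exactly the same route as the paper: classical smooth dependence on initial data for existence, the linear variational ODEs for $\beta$ and $\eta$, the explicit exponential formula for $\beta\in(0,1]$, the sign of the source in the $\eta$-equation (you instead write out the variation-of-parameters representation $\eta_t = -2\beta_t\int_0^t(\bar\sigma_1^2+h_s)\beta_s\,ds$, a nice explicit reformulation of the same observation), and a Gronwall/comparison argument for the quantitative bound. So structurally this is the paper's proof.

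The step you flag as "requires the most care" and then wave through is, however, a genuine gap, and it cannot be fixed by bookkeeping. Writing $a_t := 2(\bar\sigma_1^2+h_t)\gamma_t^h + 2\lambda$ for the damping coefficient, you have $\dot\zeta_t = 2(\bar\sigma_1^2+h_t)\beta_t^2 - a_t\zeta_t$ with $\zeta_t\ge 0$. To deduce $\dot\zeta_t \le R_1 - R_2\zeta_t$ you need the term $-a_t\zeta_t$ to be bounded \emph{above} by $-R_2\zeta_t$, which (since $\zeta_t\ge 0$) requires a \emph{lower} bound $a_t\ge R_2$. But the bounds you invoke, $\gamma_t^h\le\gamma_{\max}$ and $h_t\le h_{\max}$, only give an \emph{upper} bound on $a_t$, i.e.\ the direction is wrong; and the constant $R_2 = 2(\bar\sigma_1^2+h_{\max})\gamma_{\max}+\lambda$ is visibly of upper-bound type. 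The only uniform lower bound on $a_t$ is $a_t\ge 2\lambda$ (since $\gamma^h,h\ge 0$), so the sharpest bound this argument actually produces is $\eta_t\ge -\tfrac{R_1}{2\lambda}(1-e^{-2\lambda t})$ when $\lambda>0$, and only $\eta_t\ge -R_1t$ when $\lambda=0$. Indeed, taking $\lambda=0$, $\sigma_2=0$, $\gamma_0=0$, $h\equiv 0$ gives $\gamma^h\equiv 0$, $\beta\equiv 1$, and your own formula yields $\eta_t=-2\bar\sigma_1^2 t\to-\infty$, which violates the claimed bound $\eta_t\ge -R_1/R_2=-1/\gamma_{\max}$. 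The existence, positivity of $\beta$, and strict negativity of $\eta$ (the only parts of the lemma used downstream) are fine; but the quantitative lower bound as stated, and hence your comparison step, does not hold. You should either replace $R_2$ by $2\lambda$ (and treat $\lambda=0$ separately) or simply drop the quantitative bound.
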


\begin{proof}
The existence of $\beta$ and $\eta$ is a classical result; see e.g.\ \cite[Theorem V.3.1]{hartman2002ordinary}.
In fact, we have
\begin{equation}\begin{split}\label{eq:beta-ODE}
    \rd \beta_{t}
    =  f_{\gamma}(\gamma^{h;\gamma_0}_{t},h_{t})\beta_{t}\rd t
    =  -2\bigl((\bar{\sigma}_{1}^{2} + h_{t})\gamma^{h;\gamma_0}_{t}
    +\lambda\bigr)\beta_{t}\rd t,
    \quad \beta_{0}=1, \quad t\geq 0.
\end{split}\end{equation}
and
\begin{equation*}
    \rd \eta_{t}
    = \bigl[f_{\gamma\gamma}(\gamma^{h;\gamma_0}_{t},h_{t})\beta_{t}^{2}
    +f_{\gamma}(\gamma^{h;\gamma_0}_{t},h_{t})\eta_{t}\bigr]\rd t
    = -2\bigl[(\bar{\sigma}_{1}^{2} + h_{t})(\beta_{t}^{2} + \gamma^{h;\gamma_0}_t\eta_t)
    + \lambda\eta_{t}\bigr]\rd t, \quad
    \eta_{0} = 0, \quad t\geq 0.
\end{equation*}
Observe that the coefficient of $\beta_{t}$ in the ODE \eqref{eq:beta-ODE} is strictly negative
and bounded in $(\gamma_t,h_t)$ on $[0,\gamma_{\max}]\times[0,h_{\max}]$.
As a result, $\beta$ is $(0,1]$-valued, hence strictly positive and uniformly bounded.
Finally, since the coefficient in the ODE for $\eta$ is strictly negative,
it follows that $\eta_t$ is strictly negative for all $t>0$.
Moreover, we have
\[
  -2\bigl[(\bar{\sigma}_{1}^{2} + h_{t})(\beta_{t}^{2} + \gamma^{h;\gamma_0}_t\eta_t)
    + \lambda\eta_{t}\bigr] \geq -(R_1 + R_2\eta_t),\quad t\geq 0,
\]
from which we conclude by a standard comparison result that $\eta_t\geq\rho_t$, where $\rho_t$ solves
\[
  \rd \rho_t = -(R_1+R_2\rho_t)\rd t,
  \quad \rho_0 = 0,\quad t\geq 0,
\]
and is explicitly given by $\rho_t=-R_1(1 - e^{-R_2t})/R_2$.
\end{proof}
The combination of the differentiability of the states with respect to the initial condition
and the existence of an optimal continuous control
allows us to infer several properties of the value function.

\begin{theorem}\label{thm:regularity-value-function}
The value function $v$ of the reduced problem is strictly increasing, strictly concave,
and Lipschitz continuous with constant $L_v$.
\end{theorem}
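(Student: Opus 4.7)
The plan is to prove the three assertions separately, each by transferring a qualitative property of the state flow $\gamma \mapsto \gamma^{h;\gamma}_t$ to the value function $v$, leveraging the existence of continuous optimal controls from Theorem \ref{thm:existence} and the derivative estimates from Lemma \ref{lem:regularity-state}.

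For the Lipschitz bound with constant $L_v = \bar a/\delta$, I would fix any Borel measurable $h \in \cA^{obs}_{det}$ and two initial conditions $\gamma_1, \gamma_2 \in [0,\gamma_{\max}]$. The difference $\Delta_t := \gamma^{h;\gamma_1}_t - \gamma^{h;\gamma_2}_t$ satisfies an affine ODE whose coefficient $-[(\bar\sigma_1^2 + h_t)(\gamma^{h;\gamma_1}_t + \gamma^{h;\gamma_2}_t) + 2\lambda]$ is non-positive, so the Gr\"onwall representation of $\Delta_t$ gives $|\Delta_t| \leq |\gamma_1 - \gamma_2|$ for all $t \geq 0$. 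Substituting this bound into $\cJ_{det}(h;\cdot)$ and integrating $\bar a e^{-\delta t}$ over $[0,\infty)$ yields the Lipschitz estimate with constant $L_v$, uniformly in $h$; taking the infimum transfers the bound to $v$.

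For strict monotonicity, given $\gamma_1 < \gamma_2$, I would pick a continuous optimal control $h^*$ for the initial condition $\gamma_2$ (which exists by Theorem \ref{thm:existence}). Lemma \ref{lem:regularity-state} gives $\beta_t > 0$, so the flow $\gamma \mapsto \gamma^{h^*;\gamma}_t$ is strictly increasing, whence $\cJ_{det}(h^*;\gamma_1) < \cJ_{det}(h^*;\gamma_2) = v(\gamma_2)$; combined with $v(\gamma_1) \leq \cJ_{det}(h^*;\gamma_1)$, this yields $v(\gamma_1) < v(\gamma_2)$. Strict concavity follows by an analogous optimizer-at-the-center argument: for $\gamma_1 \neq \gamma_2$, $\theta \in (0,1)$ and $\gamma_\theta := \theta\gamma_1 + (1-\theta)\gamma_2$, pick a continuous optimal $h^*$ for $\gamma_\theta$; since $\eta_t < 0$ for $t > 0$ by Lemma \ref{lem:regularity-state}, the map $\gamma \mapsto \gamma^{h^*;\gamma}_t$ is strictly concave on $(0,\infty)$, and integrating against $\bar a e^{-\delta t}$ gives $\theta \cJ_{det}(h^*;\gamma_1) + (1-\theta)\cJ_{det}(h^*;\gamma_2) < \cJ_{det}(h^*;\gamma_\theta) = v(\gamma_\theta)$. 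The trivial bound $v(\gamma_i) \leq \cJ_{det}(h^*;\gamma_i)$ then completes the argument.

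The main delicate point is that Lemma \ref{lem:regularity-state} is phrased for continuous information acquisition rates, whereas admissible controls in $\cA^{obs}_{det}$ are merely Borel measurable. For the Lipschitz bound this is harmless, since the direct Gr\"onwall argument at the level of the Riccati ODE works for any Borel $h$ without invoking differentiability in the initial datum. For strict monotonicity and strict concavity the obstacle is resolved by Theorem \ref{thm:existence}: applying the flow-derivative bounds of Lemma \ref{lem:regularity-state} to the \emph{optimal} continuous control, while evaluating the cost only \emph{suboptimally} at the other initial conditions, is precisely what allows the pointwise inequalities for the flow to be transferred to strict inequalities for $v$.
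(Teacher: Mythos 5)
Your proposal is correct and follows essentially the same route as the paper: the key structural device is identical — invoke Theorem \ref{thm:existence} to obtain a \emph{continuous} optimal control at the reference point (the larger endpoint for monotonicity, the midpoint for concavity), apply the sign and boundedness information on $\beta_t$ and $\eta_t$ from Lemma \ref{lem:regularity-state} to $\cJ_{det}(h^*;\argdot)$, and then use optimality on one side and suboptimality ($v\le\cJ_{det}$) on the other to transfer the strict inequality to $v$. Your closing paragraph correctly identifies the one genuine subtlety (Lemma \ref{lem:regularity-state} requires $h$ continuous) and resolves it the same way the paper does.

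The one place you genuinely deviate is the Lipschitz bound. The paper differentiates $\cJ_{det}(h;\argdot)$ in $\gamma_0$ for continuous $h$, bounds $\partial_{\gamma_0}\cJ_{det}(h;\gamma_0)=\bar a\int_0^\infty e^{-\delta t}\beta_t\,\rd t\le\bar a/\delta$ using $\beta\le 1$, and then passes to $v$ by citing the fact that an infimum of uniformly Lipschitz functions is Lipschitz. You instead run a direct comparison argument on the Riccati flow itself: $\Delta_t:=\gamma^{h;\gamma_1}_t-\gamma^{h;\gamma_2}_t$ satisfies $\dot\Delta_t=-\bigl[(\bar\sigma_1^2+h_t)(\gamma^{h;\gamma_1}_t+\gamma^{h;\gamma_2}_t)+2\lambda\bigr]\Delta_t$ with nonpositive coefficient, whence $|\Delta_t|\le|\gamma_1-\gamma_2|$, and integrating the cost difference against $\bar a e^{-\delta t}$ gives the $L_v$-Lipschitz estimate uniformly in $h$. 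Your route is slightly more elementary: it applies to arbitrary Borel $h$ without invoking continuity or differentiability of the flow in the initial datum, so it avoids both the appeal to Lemma \ref{lem:regularity-state} and the restriction to continuous controls at this step (the paper's version implicitly relies on the infimum over Borel and over continuous $h$ coinciding). Both arguments are correct; yours is marginally more self-contained for this particular claim.
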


\begin{proof}
Fix $\gamma_0\in[0,\gamma_{\max}]$ and let $h\in\cA^{obs}_{det}$ be an arbitrary continuous control.
Just as in Lemma~\ref{lem:regularity-state}, denote by $\beta$ and $\eta$ the first- and second-order
derivatives of $\gamma^h:=\gamma^{h;\gamma_0}$ with respect to $\gamma_0$.
Using the Leibniz rule, see e.g.\ \cite[8.11.2]{dieudonne2011foundations}, we compute
\begin{align}\label{eq:J-first-derivative}
    \frac{\rd}{\rd \gamma_0}\cJ_{det}(h;\gamma_0)
    =\int_{0}^{\infty}e^{-\delta t}k_{\gamma}(\gamma_{t}^{h},h_{t})\beta_{t}\rd t
    =\bar{a}\int_{0}^{\infty} e^{-\delta t}\beta_{t}\rd t > 0,
\end{align}
where we have used that $\beta$ is strictly positive.
In particular, $\gamma_0\mapsto\cJ_{det}(h;\gamma_0)$ is strictly increasing.
Letting $\gamma_0,\bar\gamma_0\in[0,\gamma_{\max}], \gamma_0<\bar\gamma_0$, and assuming that $h$ is an
optimal continuous control for $\bar\gamma_0$, it follows that
\begin{align*}
    v(\bar\gamma_0)
    =\cJ_{det}(h;\bar\gamma_0)
    > \cJ_{det}(h;\gamma_0)
    \geq v(\gamma_0),
\end{align*}
and thus the value function $v$ is also strictly increasing.

Next, since $\beta$ is upper bounded by $1$, it follows that
\[
\frac{\rd}{\rd \gamma_0}\cJ_{det}(h;\gamma_0) = \bar{a}\int_{0}^{\infty} e^{-\delta t}\beta_{t}\rd t
\leq \bar{a}\int_{0}^{\infty} e^{-\delta t}\rd t = \frac{\bar{a}}{\delta} = L_v.
\]
Hence $\gamma_0\mapsto\cJ_{det}(h;\gamma_0)$ is Lipschitz continuous, uniformly in $h$.
But this implies that the value function $v$ is Lipschitz continuous with the same Lipschitz constant $L_v$ by \cite[Proposition 1.32]{weaver2018lipschitz}.

Finally, regarding the strict concavity of $v$, we first compute
\begin{equation}\label{eq:J-second-derivative}
    \frac{\rd^{2}}{\rd \gamma_0^{2}}\cJ_{det}(h;\gamma_0)
    =\bar{a}\int_{0}^{\infty} e^{-\delta t}\eta_{t}\rd t < 0,
\end{equation}
where we have used that $\eta_t<0$ for all $t>0$.
Hence $\gamma_0\mapsto\cJ_{det}(h;\gamma_0)$ is strictly concave.
Now let $\alpha\in(0,1)$ and define the convex combination
$\gamma^{\alpha}_0:=\alpha\gamma_0 + (1-\alpha)\bar\gamma_0$.
Let $h^{\alpha}$ be an optimal continuous control for $\gamma^{\alpha}_0$ and observe
that, by the strict concavity of $\gamma_0\mapsto\cJ_{det}(h^\alpha;\gamma_0)$,
\begin{align*}
    v(\gamma^{\alpha}_0)
    =\cJ_{det}(h^{\alpha};\gamma^{\alpha}_0)
    > \alpha \cJ_{det}(h^{\alpha};\gamma_0) + (1-\alpha) \cJ_{det}(h^{\alpha};\bar\gamma_0)
    \geq \alpha v(\gamma_0) + (1-\alpha) v(\bar\gamma_0),
\end{align*}
so the value function is strictly concave as claimed.
\end{proof}

\begin{remark}
\label{rmk:concavity}
Since the conditional variance is a measure for the uncertainty present in the partial information problem
(which we argue is even {\it the} measure for information in this problem),
the concavity of the value function $v$ means that increasing uncertainty
comes with an increase of the \emph{marginal} cost.
This finding is in line with the findings of \cite{radner1984nonconcavity},
and stronger for out specific setting.
There, non-concavity of the value function on a finite state space for a information structure maximization problem was observed (corresponding to non-convexity in a minimization problem).
\end{remark}

\subsection{Conditional Precision and Value Function Regularity}
Our next aim is to show that the value function $v$ of the reduced problem is continuously differentiable.
For this, it turns out to be convenient to reparametrize the problem in terms of the
{\it conditional precision}, i.e.\ the inverse of the conditional variance.
In what follows, whenever $x$ is a quantity parameterized in terms of conditional variance,
we denote its reparametrization in terms of conditional precision by $\check{x}$.
More precisely, for any initial variance $\gamma_0\in[0,\gamma_{\max}]$ and any control
$h\in\cA^{obs}_{det}$, we define
\footnote{We use the convention $1/0:=\infty$ here.
Moreover, we subsequently agree upon the convention $1/\infty=0$.}
the conditional precision as $\check{\gamma}^{h;\check{\gamma}_0}:=1/\gamma^{h;\gamma_0}$, where $\check{\gamma}_0:= 1/\gamma_0$ denotes the initial precision.
Writing $\check{\gamma}_{\min}:= 1/\gamma_{\max}>0$ for the minimum precision, we see that $\check{\gamma}^{h;\check{\gamma}_0}$ is $[\check{\gamma}_{\min},\infty]$-valued.
Moreover, a straightforward change of variables shows that $\check{\gamma}^{h;\check{\gamma}_0}_{t}$ is an absolutely continuous function satisfying
\begin{equation}\label{eq:SEE-reparameterized}
  \rd \check{\gamma}^{h;\check{\gamma}_0}_{t} = \check{f}(\check{\gamma}^{h;\check{\gamma}_0}_{t},h_{t})\rd t,
    \quad \check{\gamma}^{h;\check{\gamma}_0}_{0}=\check{\gamma}_0,
	\quad t\in[0,\infty),
\end{equation}
almost everywhere, where
\begin{equation*}
  \check{f}(\check{\gamma},h)
    := \bar{\sigma}_{1}^{2} + h + 2\lambda\check{\gamma} - \sigma_{2}^{2}\check{\gamma}^{2},
    \quad (\check{\gamma},h)\in [\check{\gamma}_{\min},\infty]\times[0,h_{\max}].
\end{equation*}
Introducing the reparametrized cost function
$\check{k}:[\check{\gamma}_{\min},\infty]\times[0,h_{\max}]\to[0,\infty)$ as 
\begin{equation*}
  \check{k}(\check{\gamma},h)
    := k\bigl(1/\check{\gamma},h\bigr)
	=\frac{\bar{a}}{\check{\gamma}} + c(h),
\end{equation*}
the reparametrized cost functional $\check{\cJ}_{det}: \cA^{obs}_{det} \times [\check{\gamma}_{\min},\infty] \to [0,\infty)$ and the reparametrized value function $\check{v}: [\check{\gamma}_{\min},\infty]\to[0,\infty)$ can be written as
\begin{equation}\label{eq:reparameterized-control-problem}
  \check{\cJ}_{det}(h;\check{\gamma}_0) := \int_{0}^{\infty}e^{-\delta t}
    \check{k}(\check{\gamma}^{h;\check{\gamma}_0}_{t}, h_{t})\rd t
  \quad\text{and}\quad
  \check{v}(\check{\gamma}_0) := \inf_{h\in\cA^{obs}_{det}}\check{\cJ}_{det}(h;\check{\gamma}_0).
\end{equation}
It is clear that $\cJ_{det}(h;\gamma)=\check{\cJ}_{det}(h;1/\gamma)$ for any $h\in\cA^{obs}_{det}$ and hence $v(\gamma)=\check{v}(1/\gamma)$.
In particular, a control is optimal for $v(\gamma)$ if and only if it is also optimal for $\check{v}(1/\gamma)$.

We proceed to show that $\check{v}$ is convex and semiconcave, and hence continuously differentiable.
From this, we conclude that $v$ must also be continuously differentiable.
As previously, the idea is to use that the state process is sufficiently regular,
which translates into regularity of the cost functional and the value function.

To formulate the regularity result, let us first fix some notation.
We write $\cC^0$ for the space of bounded continuous functions $h:[0,\infty)\to\R$ and equip this space with the norm
\[
  \|h\|_{\infty} := \sup_{t\in[0,\infty)}|h(t)|,\qquad h\in\cC^0.
\]
With this, we set $\cS:=\cC^0\times\R$ with norm
\[
  \|(h,\check{\gamma}_0)\|_{\cS} := \|h\|_{\infty} + |\check{\gamma}_{0}|,\qquad (h,\check{\gamma}_0)\in\cS.
\]
Given another normed space $(Y,\|\argdot\|_Y)$ and an operator $L:\cS\to Y$, we write
\begin{equation}\begin{aligned}\label{eq:direc-derivative}
  \rD L(h,\check{\gamma}_0)[h',\check{\gamma}_0'] &:= \frac{\rd}{\rd\varepsilon} L\bigl(h+\varepsilon h',\check{\gamma}_0+\varepsilon\check{\gamma}_0'\bigr)\Big|_{\varepsilon=0},\\
  \rD^2 L(h,\check{\gamma}_0)[h',\check{\gamma}_0'] &:= \frac{\rd^2}{\rd\varepsilon^2} L\bigl(h+\varepsilon h',\check{\gamma}_0+\varepsilon\check{\gamma}_0'\bigr)\Big|_{\varepsilon=0},
\end{aligned}\end{equation}
for the first- and second-order directional derivatives of $L$ at $(h,\check{\gamma}_0)$ in direction $(h',\check{\gamma}_0')$.

\begin{lemma}\label{lem:state_diffable}
Let $\nu\in(0,\bar{\sigma}_1^2)$.
Then the mapping
$(h,\check{\gamma}_0)\mapsto \check{\gamma}^{h;\check{\gamma}_0}_{t}$
is twice differentiable on the open set
\[
  \cS_\nu := \bigl\{h \in \cC^0 : -\bar{\sigma}_1^2 + \nu < h < h_{\max}+\nu\bigr\}
  \times (\check{\gamma}_{\min}-\nu,\infty) \subset \cS
\]
in any direction $(h',\check{\gamma_0}')\in\cS$.
Moreover, for any $(h;\check{\gamma}_0)\in\cS_\nu$, the mapping
$(h',\check{\gamma}_0')\mapsto \rD \check{\gamma}^{h;\check{\gamma}_0}_{t}[h',\check{\gamma}_0']$
is linear and bounded on $\cS$ and satisfies
$\rD \check{\gamma}_t^{h;\check{\gamma}_0}[h',\check{\gamma}_0']>0$ for all
$t\in[0,\infty)$, whereas
$\rD^2 \check{\gamma}_t^{h;\check{\gamma}_0}[h',\check{\gamma}_0']\leq 0$ for
all $t\in[0,\infty)$.
\end{lemma}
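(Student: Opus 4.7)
The plan is to apply the classical smooth-dependence theory for ordinary differential equations to \eqref{eq:SEE-reparameterized} and to read off the desired sign properties directly from the resulting variational equations. The key structural observations are that $\check{f}(\check{\gamma},h) = \bar{\sigma}_{1}^{2} + h + 2\lambda\check{\gamma} - \sigma_{2}^{2}\check{\gamma}^{2}$ is polynomial in $\check{\gamma}$ and affine in $h$, so $C^{\infty}$ in both, and that on $\cS_{\nu}$ one has $\bar{\sigma}_{1}^{2} + h \geq \nu > 0$ pointwise. The latter, together with the comparison argument of Remark \ref{rem:negative-h} applied in the $\gamma$-coordinate, produces locally uniform bounds on $\check{\gamma}^{h;\check{\gamma}_{0}}_{t}$ as $(h,\check{\gamma}_{0})$ ranges over bounded subsets of $\cS_{\nu}$ and $t\in[0,T]$. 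These bounds are the input needed to invoke a Banach-space implicit function theorem on the flow map (equivalently, the classical parameter-dependence theorem of Hartman, suitably lifted to the Banach setting), producing twice Gâteaux differentiability of $(h,\check{\gamma}_{0})\mapsto\check{\gamma}^{h;\check{\gamma}_{0}}_{t}$ in any direction $(h',\check{\gamma}_{0}')\in\cS$.

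Next I would derive the variational ODEs by differentiating \eqref{eq:SEE-reparameterized} in the parameter. Using $\check{f}_{h} = 1$, $\check{f}_{\check{\gamma}} = 2\lambda - 2\sigma_{2}^{2}\check{\gamma}$, $\check{f}_{\check{\gamma}\check{\gamma}} = -2\sigma_{2}^{2}$, and $\check{f}_{hh} = \check{f}_{\check{\gamma}h} = 0$, the directional derivatives $\check{\beta}_{t} := \rD\check{\gamma}^{h;\check{\gamma}_{0}}_{t}[h',\check{\gamma}_{0}']$ and $\check{\eta}_{t} := \rD^{2}\check{\gamma}^{h;\check{\gamma}_{0}}_{t}[h',\check{\gamma}_{0}']$ satisfy
\begin{align*}
  \rd\check{\beta}_{t} &= \bigl[(2\lambda - 2\sigma_{2}^{2}\check{\gamma}_{t})\check{\beta}_{t} + h'_{t}\bigr]\rd t, & \check{\beta}_{0} &= \check{\gamma}_{0}',\\
  \rd\check{\eta}_{t} &= \bigl[(2\lambda - 2\sigma_{2}^{2}\check{\gamma}_{t})\check{\eta}_{t} - 2\sigma_{2}^{2}\check{\beta}_{t}^{2}\bigr]\rd t, & \check{\eta}_{0} &= 0.
\end{align*}
Linearity of $(h',\check{\gamma}_{0}')\mapsto\check{\beta}_{t}$ is immediate from the linearity of the $\check{\beta}$-ODE in $(\check{\beta},h',\check{\gamma}_{0}')$. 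Setting $A_{t} := \int_{0}^{t}(2\lambda - 2\sigma_{2}^{2}\check{\gamma}_{r})\rd r$, variation of constants yields the explicit representations
\[
  \check{\beta}_{t} = \check{\gamma}_{0}' e^{A_{t}} + \int_{0}^{t} e^{A_{t}-A_{s}} h'_{s}\rd s, \qquad \check{\eta}_{t} = -2\sigma_{2}^{2}\int_{0}^{t} e^{A_{t}-A_{s}}\check{\beta}_{s}^{2}\rd s.
\]
Since $A_{t}$ is bounded on $[0,t]$ in view of the bound on $\check{\gamma}_{r}$, the $\check{\beta}$-formula gives boundedness of the linear operator $(h',\check{\gamma}_{0}')\mapsto\check{\beta}_{t}$ on $\cS$, and the $\check{\eta}$-formula delivers $\check{\eta}_{t}\leq 0$ for every direction. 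The strict positivity $\check{\beta}_{t}>0$ is then the statement that $\rD\check{\gamma}^{h;\check{\gamma}_{0}}_{t}$ is a strictly positive linear operator on the natural positive cone $\{(h',\check{\gamma}_{0}')\,:\, h'\geq 0,\ \check{\gamma}_{0}'\geq 0\}\setminus\{0\}$: both summands in the first formula are non-negative and at least one is strictly positive (for $t>0$ if the positivity originates from $h'$).

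The main technical obstacle I foresee is the rigorous verification of twice differentiability in the infinite-dimensional parameter $h\in\cC^{0}$. The formal variational equations are straightforward to write down, but showing convergence of the difference quotients $\varepsilon^{-1}(\check{\gamma}^{h+\varepsilon h';\check{\gamma}_{0}+\varepsilon\check{\gamma}_{0}'}_{t} - \check{\gamma}^{h;\check{\gamma}_{0}}_{t})\to\check{\beta}_{t}$ (and analogously at second order) in a form usable for subsequent integration against $e^{-\delta t}$ in \eqref{eq:reparameterized-control-problem} requires Gronwall-type estimates on each finite interval $[0,T]$ that are uniform in $\varepsilon$ small enough that $(h+\varepsilon h',\check{\gamma}_{0}+\varepsilon\check{\gamma}_{0}')$ stays in $\cS_{\nu}$, combined with the global-in-time boundedness of $\check{\gamma}_{t}$ inherited from Remark \ref{rem:negative-h}. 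Once this regularity step is carried out, the remaining sign and boundedness statements reduce to inspection of the explicit formulas above.
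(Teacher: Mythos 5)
Your proposal follows essentially the same route as the paper: both invoke Hartman's parameter-dependence theorem for ODEs, write down the first- and second-order variational equations for the directional derivatives, solve by variation of constants, and read off linearity, boundedness, and sign from the explicit formulas. The differences are presentational but worth noting. The paper first splits the directional derivative via the chain rule into partials $\beta^{h}[h']$ and $\beta^{\check\gamma_0}$, and the second derivative into three pieces $\eta^{(h,h)}, \eta^{(h,\check\gamma_0)}, \eta^{(\check\gamma_0,\check\gamma_0)}$, arguing each piece is non-positive. You instead track a single pair $(\check\beta,\check\eta)$ and solve one variational system. That is mildly cleaner for the second-order sign: your formula $\check\eta_t = -2\sigma_2^2\int_0^t e^{A_t-A_s}\check\beta_s^2\,\rd s$ makes the non-positivity unconditional via the square $\check\beta_s^2\geq 0$, whereas the decomposed argument (with a cross-term carrying a factor $2\check\gamma_0'$ of indeterminate sign, and with $\beta^h[h']$ not sign-definite for sign-indefinite $h'$) really needs exactly the completion-of-squares you make explicit. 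Two small remarks: (i) your concern about lifting Hartman to a Banach-space implicit function theorem is unnecessary here, because the lemma only asserts \emph{directional} (Gâteaux) differentiability, i.e. smoothness of the one-parameter map $\eps\mapsto\check\gamma^{h+\eps h';\check\gamma_0+\eps\check\gamma_0'}_t$, to which the finite-dimensional parameter-dependence theorem applies verbatim — this is exactly how the paper uses it; (ii) you are right that strict positivity $\check\beta_t>0$ only holds on the nonnegative cone of directions and, for directions originating purely from $h'\geq 0$, only for $t$ large enough that $h'$ has acted — being careful about that is a small improvement over the paper's phrasing.
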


\begin{proof}
First, note that by Remark~\ref{rem:negative-h} the conditional precision
$\check{\gamma}^{h;\check{\gamma}_0}$ is well-defined for all $(h,\check{\gamma}_0)\in\cS_\nu$,
even if $h$ becomes negative.
Using the chain rule twice, we find that
\begin{align*}
  \rD \check{\gamma}_t^{h;\check{\gamma}_0}[h',\check{\gamma}_0']
  = \frac{\rd}{\rd\varepsilon} \check{\gamma}_t^{h+\varepsilon h';\check{\gamma}_0+\varepsilon\check{\gamma}_0'}\Big|_{\varepsilon=0}
  &= \frac{\rd}{\rd\varepsilon}\check{\gamma}^{h+\varepsilon h';\check{\gamma}_0}_{t}\Big|_{\varepsilon=0}
    + \frac{\rd}{\rd\varepsilon}\check{\gamma}^{h;\check{\gamma}_0+\varepsilon \check{\gamma}_0'}_{t}\Big|_{\varepsilon=0}\\
  &= \frac{\rd}{\rd\varepsilon}\check{\gamma}^{h+\varepsilon h';\check{\gamma}_0}_{t}\Big|_{\varepsilon=0}
    + \check{\gamma}_0'\frac{\partial}{\partial\check{\gamma}_0}\check{\gamma}^{h;\check{\gamma}_0}_{t}
  =: \beta^{h}_t[h'] + \check{\gamma}_0'\beta^{\check{\gamma}_0}_t,
\end{align*}
and, as in the proof of Lemma~\ref{lem:regularity-state}, it follows from \cite[Theorem V.3.1]{hartman2002ordinary} that $\beta^{h}[h']$ and $\beta^{\check{\gamma}_0}$ exist and follow the dynamics
\begin{align*}
  \rd \beta^{\check{\gamma}_0}_{t}
    &= 2\bigl(\lambda - \sigma_{2}^{2}\check{\gamma}_{t}^{h;\check{\gamma}_0}\bigr)\beta^{\check{\gamma}_0}_{t}\rd t,
	& \beta^{\check{\gamma}_0}_{0} &= 1, & t&\in[0,\infty),\\
  \rd \beta^{h}_{t}[h']
    &= 2(\lambda - \sigma_{2}^{2}\check{\gamma}_{t}^{h;\check{\gamma}_0})\beta^{h}_{t}[h']\rd t + {h}_{t}'\rd t,
	& \beta^{h}_{0}[h'] &= 0, & t&\in[0,\infty).
\end{align*}
These differential equations can be solved explicitly yielding
\[
  \beta^{\check{\gamma}_0}_{t} = \exp\Bigl\{2\int_0^t \bigl(\lambda - \sigma_{2}^{2}\check{\gamma}_{s}^{h;\check{\gamma}_0}\bigr) \rd s\Bigr\}
  \quad\text{and}\quad
  \beta^{h}_{t}[h'] = \int_0^t h'_s \beta^{\check{\gamma}_0}_{t-s} \rd s,
  \qquad t\in[0,\infty).
\]
From this, we immediately find that $\rD \check{\gamma}^{h;\check{\gamma}_0}_{t}$ is linear,
bounded, and takes strictly positive values.

The argument for the second-order derivative is similar.
Using the chain rule, it follows that it takes the form
\[
  \rD^2\check{\gamma}_t^{h;\check{\gamma}_0}[h',\check{\gamma}_0']
  = \frac{\rd^2}{\rd\varepsilon^2} \check{\gamma}_t^{h+\varepsilon h';\check{\gamma}_0+\varepsilon\check{\gamma}_0'}\Big|_{\varepsilon=0}
  = \eta^{(h,h)}_t[h'] + 2\check{\gamma}_0'\eta_t^{h,\check{\gamma_0}}[h'] + (\check{\gamma}_0')^2\eta^{(\check{\gamma}_0,\check{\gamma}_0)}_t,
\]
where the derivatives $\eta^{(h,h)}[h']$, $\eta^{(h,\check{\gamma}_0)}[h']$, and $\eta^{(\check{\gamma}_0,\check{\gamma}_0)}$ satisfy
\begin{align*}
  \rd \eta^{(h,h)}_{t}[h']
    &= 2\bigl[(\lambda - \sigma_{2}^{2}\check{\gamma}_{t}^{h;\check{\gamma}_0})\eta^{(h,h)}_{t}[h'] - \sigma_{2}^{2}(\beta^{h}_{t}[h'])^{2}\bigr]\rd t,
    & \eta^{(h,h)}_{0}[h'] &= 0,
    & t&\in[0,\infty),\\
  \rd \eta^{(h,\check{\gamma}_0)}_{t}[h']
    &= 2\bigl[(\lambda - \sigma_{2}^{2}\check{\gamma}_{t}^{h;\check{\gamma}_0})\eta^{(h,\check{\gamma}_0)}_{t}[h'] - \sigma_{2}^{2}\beta_{t}^{h}[h']\beta^{\check{\gamma}_0}_t\bigr]\rd t,
    & \eta^{(h,\check{\gamma}_0)}_{0}[h'] &= 0,
    & t&\in[0,\infty),\\
  \rd\eta^{(\check{\gamma}_0,\check{\gamma}_0)}_{t}
    &= 2\bigl[(\lambda - \sigma_{2}^{2}\check{\gamma}_{t}^{h;\check{\gamma}_0})\eta^{(\check{\gamma}_0,\check{\gamma}_0)}_{t} - \sigma_{2}^{2}(\beta_{t}^{\check{\gamma}_0})^{2}\bigr]\rd t,
	& \eta^{(\check{\gamma}_0,\check{\gamma}_0)}_{0} &= 0,
    & t&\in[0,\infty).
\end{align*}
Once again, these equations can be solved explicitly, yielding
\begin{align*}
  \eta^{(h,h)}_{t}[h'] &= - 2\sigma_{2}^{2}\int_0^t (\beta^{h}_{t}[h'])^{2}\beta^{\check{\gamma}_0}_{t-s} \rd s,\\
  \eta^{(h,\check{\gamma}_0)}_{t}[h'] &= - 2\sigma_{2}^{2}\int_0^t \beta_{t}^{h}[h']\beta^{\check{\gamma}_0}_t \beta^{\check{\gamma}_0}_{t-s} \rd s,\\
  \eta^{(\check{\gamma}_0,\check{\gamma}_0)}_{t} &= - 2\sigma_{2}^{2}\int_0^t (\beta_{t}^{\check{\gamma}_0})^{2} \beta^{\check{\gamma}_0}_{t-s} \rd s,
\end{align*}
for all $t\in[0,\infty)$. Since $\beta^h[h'], \beta^{\check{\gamma}_0}\geq 0$, we conclude that $\rD^2 \check{\gamma}_t^{h;\check{\gamma}_0}[h',\check{\gamma}_0']\leq 0$ for all $t\in[0,\infty)$.
\end{proof}

With the second order directional differentiability of the state process at hand,
it is straighforward to show that the reparametrized cost functional is strictly convex.

\begin{proposition}
The cost functional $\check{\cJ}_{det}$ is strictly convex on
$(\cA^{obs}_{det}\times[\check{\gamma}_{\min},\infty))\cap\cS$.
\end{proposition}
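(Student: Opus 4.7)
The plan is to show that the second directional derivative of $\check{\cJ}_{det}$ is strictly positive in every non-zero direction; this yields strict convexity along every line segment in the (convex) domain, hence global strict convexity. Fix an interior point $(h,\check{\gamma}_0)$ in $(\cA^{obs}_{det}\times[\check{\gamma}_{\min},\infty))\cap\cS$ and a direction $(h',\check{\gamma}_0')\in\cS\setminus\{0\}$. By Lemma~\ref{lem:state_diffable} the state $t\mapsto\check{\gamma}^{h;\check{\gamma}_0}_t$ is twice directionally differentiable, and combining the chain rule with an interchange of differentiation and integration (justified by the explicit bounds on the first- and second-order state perturbations together with $e^{-\delta t}$) and observing that $\check{k}(\check{\gamma},h)=\bar{a}/\check{\gamma}+c(h)$ carries no mixed term yields
\[
 \rD^2\check{\cJ}_{det}(h,\check{\gamma}_0)[h',\check{\gamma}_0']
 = \int_0^\infty e^{-\delta t}\Bigl\{\check{k}_{\check{\gamma}\check{\gamma}}\,\bigl(\rD\check{\gamma}^{h;\check{\gamma}_0}_t[h',\check{\gamma}_0']\bigr)^2 + c''(h_t)(h_t')^{2} + \check{k}_{\check{\gamma}}\,\rD^2\check{\gamma}^{h;\check{\gamma}_0}_t[h',\check{\gamma}_0']\Bigr\}\rd t,
\]
with the partial derivatives of $\check{k}$ evaluated at $(\check{\gamma}^{h;\check{\gamma}_0}_t,h_t)$.

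Each of the three integrand terms is non-negative: $\check{k}_{\check{\gamma}\check{\gamma}}=2\bar{a}/\check{\gamma}^{3}>0$; $c''\geq 0$ by convexity of $c$; and $\check{k}_{\check{\gamma}}=-\bar{a}/\check{\gamma}^{2}<0$ multiplied by $\rD^2\check{\gamma}^{h;\check{\gamma}_0}_t\leq 0$ from Lemma~\ref{lem:state_diffable} is non-negative. Thus $\rD^2\check{\cJ}_{det}\geq 0$ unconditionally, and it remains to upgrade this to a strict inequality whenever $(h',\check{\gamma}_0')\neq 0$. I split into two cases. If $\check{\gamma}_0'\neq 0$, then at $t=0$ one has $\rD\check{\gamma}^{h;\check{\gamma}_0}_0[h',\check{\gamma}_0']=\check{\gamma}_0'\neq 0$, so by continuity in $t$ the first-term integrand is bounded below by a positive constant on a neighborhood of $0$ and its integral is strictly positive. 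If $\check{\gamma}_0'=0$ but $h'\not\equiv 0$, then $\beta^h_t[h']:=\rD\check{\gamma}^{h;\check{\gamma}_0}_t[h',0]$ cannot vanish identically: were it zero, substitution into the inhomogeneous linear ODE for $\beta^h[h']$ derived in Lemma~\ref{lem:state_diffable} would force $h'\equiv 0$, a contradiction; by continuity $\beta^h[h']$ is then non-zero on an open interval and the first-term integral is again strictly positive.

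Applying this with direction $(h_1-h_0,\check{\gamma}_{0,1}-\check{\gamma}_{0,0})\neq 0$ for any two distinct points in the domain shows that $\alpha\mapsto\check{\cJ}_{det}((1-\alpha)(h_0,\check{\gamma}_{0,0})+\alpha(h_1,\check{\gamma}_{0,1}))$ has strictly positive second derivative on $[0,1]$, giving strict convexity. The main obstacle is that Assumption~\ref{ass:cost} only guarantees $c''\geq 0$ and not $c''>0$ (for instance, $c(x)=\zeta x^{1+\epsilon}$ with $\epsilon>1$ satisfies $c''(0)=0$), so strict positivity cannot be extracted from the $c''(h_t)(h_t')^2$ term alone, and simultaneously the third term is only non-negative by Lemma~\ref{lem:state_diffable}. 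Hence strict positivity must be extracted from the state-derivative contribution $\check{k}_{\check{\gamma}\check{\gamma}}(\rD\check{\gamma}_t)^2$, and it is precisely the non-degeneracy of the linear ODE for $\beta^h[h']$ (together with the boundary value $\rD\check{\gamma}_0=\check{\gamma}_0'$) that guarantees this whenever the perturbation direction is non-trivial.
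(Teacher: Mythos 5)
Your proof is correct and follows the same overall strategy as the paper's: compute the second directional derivative $\rD^2\check{\cJ}_{det}$ via the chain rule and the Leibniz rule, decompose it into the three contributions $\check{k}_{\check{\gamma}\check{\gamma}}(\rD\check{\gamma})^2$, $\check{k}_{\check{\gamma}}\rD^2\check{\gamma}$, and $c''(h)(h')^2$, use the sign information from Lemma~\ref{lem:state_diffable} to make each non-negative, and finally obtain strict convexity along every line segment. Where you diverge---and in fact sharpen the argument---is the strict-positivity step. The paper invokes Lemma~\ref{lem:state_diffable}'s assertion that $\rD\check{\gamma}_t^{h;\check{\gamma}_0}[h',\check{\gamma}_0']>0$ for all $t$ and \emph{all} directions $(h',\check{\gamma}_0')\in\cS$, and concludes $(\rD\check{\gamma}_t)^2>0$ pointwise. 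But that assertion, taken literally, is incompatible with the linearity of $(h',\check{\gamma}_0')\mapsto\rD\check{\gamma}_t[h',\check{\gamma}_0']$ established in the same lemma, since $\rD\check{\gamma}_t[-h',-\check{\gamma}_0']=-\rD\check{\gamma}_t[h',\check{\gamma}_0']$; indeed for a direction like $(h',\check{\gamma}_0')=(-h'',0)$ with $h''\geq 0$ the explicit formula $\beta^h_t[h']=\int_0^t h'_s\beta^{\check{\gamma}_0}_{t-s}\,\rd s$ is non-positive. What the argument actually requires is only that $\rD\check{\gamma}_t[h',\check{\gamma}_0']$ not vanish identically in $t$ for any non-zero direction, and this is precisely what your two-case argument delivers: when $\check{\gamma}_0'\neq 0$ you use the boundary value $\rD\check{\gamma}_0=\check{\gamma}_0'$, and when $\check{\gamma}_0'=0$, $h'\not\equiv 0$ you observe that $\beta^h[h']\equiv 0$ would force $h'\equiv 0$ through the inhomogeneous linear ODE. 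This is the cleaner version of the step, and your remark that Assumption~\ref{ass:cost} only gives $c''\geq 0$ (so strictness cannot come from the cost term alone) correctly pinpoints why the state-derivative contribution must carry the strict inequality---something the paper leaves implicit. The concluding passage from strict positivity of the second directional derivative to strict convexity is equivalent to the paper's Taylor-expansion argument, just phrased as strict convexity of the restriction to each segment.
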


\begin{proof}
Let $\nu\in(0,\bar{\sigma}_1^2)$ and $\cS_\nu$ as in Lemma~\ref{lem:state_diffable}.
We begin by showing that for any $(h,\check{\gamma}_0)\in\cS_\nu$, the cost
functional $\check{\cJ}_{det}(h;\check{\gamma}_0)$ is
twice differentiable in any direction $(h',\check{\gamma}_0')\in\cS$.
Moreover, we show that the first-order derivative
$\rD\check{\cJ}_{det}(h;\check{\gamma}_0)[h',\check{\gamma}_0']$ is linear and
continuous in $(h',\check{\gamma}_0')\in\cS$.
Note that we formally defined the cost function $\check{k}$ only on
$[\check{\gamma}_{\min},\infty]\times[0,h_{\max}]$, so that we need to
extend $\check{k}$ to a slightly larger domain to make all quantities well-defined.
The same applies to $\check{\cJ}_{det}$, which we only defined on
$\cA^{obs}_{det}\times[\check{\gamma}_{\min},\infty]$, but which can be extended
to $\cS_\nu$ in an obvious way.
Regarding the differentiability of $\check{\cJ}_{det}$,
we use the Leibniz rule \cite[8.11.2]{dieudonne2011foundations} and
Lemma~\ref{lem:state_diffable} to compute
\begin{align*}
  \rD \check{\cJ}_{det}(h;\check{\gamma}_0)[h',\check{\gamma}_0']
  &= \frac{\rd}{\rd\varepsilon}\check{\cJ}_{det}\bigl(h+\varepsilon h';\check{\gamma}_0 + \varepsilon\check{\gamma}_0'\bigr)\Big|_{\varepsilon=0}\\
  &= \int_0^\infty e^{-\delta t}\frac{\rd}{\rd\varepsilon}\check{k}\bigl(\check{\gamma}_t^{h+\varepsilon h';\check{\gamma}_0+\varepsilon\check{\gamma}_0'},h_t+\varepsilon h_t'\bigr)\Big|_{\varepsilon=0}\rd t\\
  &= -\int_0^\infty e^{-\delta t}\Bigl(\bar{a}\bigl(\check{\gamma}_t^{h;\check{\gamma}_0}\bigr)^{-2}\rD\check{\gamma}_t^{h;\check{\gamma}_0}[h',\check{\gamma}_0'] + c'(h_t)h'_t\Bigr) \rd t.
\end{align*}
Since $(h',\check{\gamma}_0')\mapsto\rD\check{\gamma}_t^{h;\check{\gamma}_0}[h',\check{\gamma}_0']$
is linear by Lemma~\ref{lem:state_diffable}, we see that also
$(h',\check{\gamma}_0')\mapsto\rD \check{\cJ}_{det}(h;\check{\gamma}_0)[h',\check{\gamma}_0']$ is linear.
Regarding the continuity, we use that
\[
  \bigl\|\rD\check{\gamma}^{h;\check{\gamma}_0}[h',\check{\gamma}_0']\bigr\|_\infty\leq M \|(h',\check{\gamma}_0')\|_{\cS},\qquad (h,\check{\gamma}_0),(h',\check{\gamma}_0')\in \bigl(\cA^{obs}_{det}\times[\check{\gamma}_{\min},\infty)\bigr)\cap\cS,
\]
for a constant $M>0$ by Lemma~\ref{lem:state_diffable} and that $c'$ is non-decreasing to estimate
\begin{align*}
  \bigl|\rD \check{\cJ}_{det}(h;\check{\gamma}_0)[h',\check{\gamma}_0']\bigr|
  &\leq \int_0^\infty e^{-\delta t} \Bigl(\bar{a}M\bigl(\check{\gamma}_{\min}-\nu\bigr)^{-2}\|(h',\check{\gamma}_0')\|_{\cS} + c'(h_{\max}+\nu)|h'_t|\Bigr) \rd t\\
  &\leq \frac{\bar{a}(\check{\gamma}_{\min}-\nu)^{-2}M + c'(h_{\max}+\nu)}{\delta}\|(h',\check{\gamma}_0')\|_{\cS},
\end{align*}
implying that $(h',\check{\gamma}_0')\mapsto\rD \check{\cJ}_{det}(h;\check{\gamma}_0)[h',\check{\gamma}_0']$ is a linear and bounded, and hence continuous, operator on $\cS$.
For the second-order derivative, we find that
\begin{align*}
  \rD^2 \check{\cJ}_{det}(h;\check{\gamma}_0\bigr)[h',\check{\gamma}_0']
  &= \frac{\rd^2}{\rd\varepsilon^2}\check{\cJ}_{det}\bigl(h+\varepsilon h';\check{\gamma}_0 + \varepsilon\check{\gamma}_0'\bigr)\Big|_{\varepsilon=0}\\
  &= \int_0^\infty e^{-\delta t}\frac{\rd^2}{\rd\varepsilon^2}\check{k}\bigl(\check{\gamma}_t^{h+\varepsilon h';\check{\gamma}_0+\varepsilon\check{\gamma}_0'},h_t+\varepsilon h_t'\bigr)\Big|_{\varepsilon=0}\rd t\\
  &= 2\bar{a}\int_0^\infty e^{-\delta t}\bigl(\check{\gamma}_t^{h+\varepsilon h';\check{\gamma}_0+\varepsilon\check{\gamma}_0'}\bigr)^{-3}\bigl(\rD \check{\gamma}_t^{h;\check{\gamma}_0}[h',\check{\gamma}_0']\bigr)^2\rd t\\
  &\qquad - \bar{a}\int_0^\infty e^{-\delta t}\bigl(\check{\gamma}_t^{h+\varepsilon h';\check{\gamma}_0+\varepsilon\check{\gamma}_0'}\bigr)^{-2}\rD^2 \check{\gamma}_t^{h;\check{\gamma}_0}[h',\check{\gamma}_0']\rd t  + \int_0^\infty e^{-\delta t}c''(h_t)(h'_t)^2\rd t.
\end{align*}
Since $\rD \check{\gamma}^{h;\check{\gamma}_0}[h',\check{\gamma}_0']>0$, $\rD^2 \check{\gamma}^{h;\check{\gamma}_0}[h',\check{\gamma}_0']\leq 0$ and $c''\geq 0$, it follows that $\rD^2 \check{\cJ}_{det}(h;\check{\gamma}_0)[h',\check{\gamma}_0']> 0$.

Let us now turn to the proof of convexity.
For this, we fix
$z^i := (h^i,\check{\gamma}_0^i)\in (\cA^{obs}_{det}\times[\check{\gamma}_{\min},\infty))\cap\cS$
for $i=1,2$.
Note that there exists $\Theta>0$ such that $z^1+\theta z^2\in\cS_\nu$
for all $\theta\in[0,\Theta]$.
As such, the function $f:[0,\Theta]\to\R$ given by
\[
  f(\theta) := \check{\cJ}_{det}(z^{1} + \theta z^{2}),\quad \theta\in[0,\Theta],
\]
is well-defined.
Moreover, we observe that for any $\theta\in(0,\Theta)$ we have
\[
  f'(\theta)
  = \lim_{\varepsilon\to 0}\frac{f(\theta+\varepsilon) - f(\theta)}{\varepsilon}
  = \lim_{\varepsilon\to 0}\frac{\check{\cJ}_{det}(z^{1} + (\theta+\varepsilon)z^{2})
    - \check{\cJ}_{det}(z^{1} + \theta z^{2})}{\varepsilon}
  = \rD \check{\cJ}_{det}(z^{1} + \theta z^{2})[z^{2}],
\]
and similarly for the second-order derivative
\[
  f''(\theta) = \rD^{2}\check{\cJ}_{det}(z^{1} + \theta z^{2})[z^{2}].
\]
A Taylor expansion of $f$ now yields the existence of $\theta_0\in[0,\Theta]$
such that
\begin{align*}
  \check{\cJ}_{det}(z^{1} + \Theta z^{2})
  = f(\Theta)
  &= f(0) + f'(0)\Theta + \frac{1}{2}f''(\theta_{0})\Theta^2\\
  &= \check{\cJ}_{det}(z^{1})
	+ \rD \check{\cJ}_{det}(z^{1})[z^{2}]\Theta
	+ \frac{1}{2} \rD^{2} \check{\cJ}_{det}(z^{1} + \theta_{0}z^{2})[\theta_{0}z^{2}]\Theta^2.
\end{align*}
Using $\rD^{2} \check{\cJ}_{det}(z^{1} + \theta_{0}z^{2})[\theta_{0}z^{2}]>0$, we obtain
\begin{equation}\label{eq:strict-convexity-det}
  \check{\cJ}_{det}(z^{2} + \Theta z^{1})
	- \check{\cJ}_{det}(z^{1})
	> \rD \check{\cJ}_{det}(z^{1})[z^{2}]\Theta.
\end{equation}
Let now $\bar{z}^{i}\in(\cA^{obs}_{det}\times[\check{\gamma}_{\min},\infty))\cap\cS$,
$\alpha\in(0,1)$ and set $\bar{z}^{\alpha} := \alpha\bar{z}^{2} + (1-\alpha)\bar{z}^{1}$.
We evaluate \eqref{eq:strict-convexity-det} with
\[
  (z^{1},z^{2}) = \bigl(\bar{z}^{\alpha},\alpha(\bar{z}^{1} - \bar{z}^{2})\bigr)
  \quad\text{and}\quad
  (z^{1},z^{2}) = \bigl(\bar{z}^{\alpha},(1-\alpha)(\bar{z}^{2} - \bar{z}^{1})\bigr).
\]
In both cases, one verifies that for any $\theta\in[0,1]$, the
linear combination $z^1+\theta z^2$ is a convex combination of $\bar{z}^{1}$
and $\bar{z}^{2}$, so that we may furthermore take $\Theta = 1$ in \eqref{eq:strict-convexity-det}.
But then
\begin{align*}
  \check{\cJ}_{det}(\bar{z}^{1})
    - \check{\cJ}_{det}(\bar{z}^{\alpha})
  &> \rD \check{\cJ}_{det}(\bar{z}^{\alpha})[\alpha(\bar{z}^{1} - \bar{z}^{2})],\\
  \check{\cJ}_{det}(\bar{z}^{2})
    - \check{\cJ}_{det}(\bar{z}^{\alpha})
  &> \rD \check{\cJ}_{det}(\bar{z}^{\alpha})[(1-\alpha)(\bar{z}^{2} - \bar{z}^{1})].
\end{align*}
Multiplying the first inequality by $1-\alpha$, the second inequality by $\alpha$,
then adding up the two inequalities, and finally using linearity
we conclude that
\[
  (1-\alpha)\check{\cJ}_{det}(\bar{z}^{1})
  + \alpha\check{\cJ}_{det}(\bar{z}^{2})
  - \check{\cJ}_{det}(\bar{z}^{\alpha})
  > 0,
\]
which establishes strict convexity of $\check{\cJ}_{det}$.
\end{proof}

Now, we will conclude that $\check{v}$ is continuously differentiable.
We do so by showing that $\check{v}$ is strictly convex and semiconcave with linear modulus,
where we recall that the latter means that there is a constant $C>0$ such that
\[
  \check{v}(\check{\gamma}_0+h) + \check{v}(\check{\gamma}_0-h) - 2\check{v}(\check{\gamma}_0) \leq C|h|^2
\]
for all $\check{\gamma}_0,h\in\R$ such that $\check{\gamma}_0-h \geq \check{\gamma}_{\min}$; see \cite[Definition 1.1.1]{cannarsa2004semiconcave}.

\begin{theorem}
The reparametrized value function $\check{v}$ is strictly convex and semiconcave with linear modulus,
and hence continuously differentiable on $(\check{\gamma}_{\min},\infty)$ with Lipschitz derivative.
\end{theorem}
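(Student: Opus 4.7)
The plan is to prove strict convexity and semiconcavity with linear modulus of $\check{v}$ separately, after which the claimed $C^{1,1}$ regularity will be a classical consequence of the two properties taken together.

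Strict convexity will be inherited from the strict convexity of $\check{\cJ}_{det}$ just established. For distinct $\check{\gamma}_0^1, \check{\gamma}_0^2 \in [\check{\gamma}_{\min},\infty)$ and $\alpha\in(0,1)$, Theorem~\ref{thm:existence} (transferred via $\check{v}(\check{\gamma}_0) = v(1/\check{\gamma}_0)$) supplies continuous optimal controls $h^1, h^2 \in \cA^{obs}_{det}$; the convex combination $(h^\alpha,\check{\gamma}_0^\alpha) := \alpha(h^1,\check{\gamma}_0^1) + (1-\alpha)(h^2,\check{\gamma}_0^2)$ is admissible because $[0,h_{\max}]$ is convex and bounded continuous $[0,h_{\max}]$-valued functions lie in $\cC^0$. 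Suboptimality of $h^\alpha$ at $\check{\gamma}_0^\alpha$ combined with the strict joint convexity of $\check{\cJ}_{det}$ then yields
\[
  \check{v}(\check{\gamma}_0^\alpha) \leq \check{\cJ}_{det}(h^\alpha;\check{\gamma}_0^\alpha) < \alpha \check{\cJ}_{det}(h^1;\check{\gamma}_0^1) + (1-\alpha) \check{\cJ}_{det}(h^2;\check{\gamma}_0^2) = \alpha\check{v}(\check{\gamma}_0^1) + (1-\alpha)\check{v}(\check{\gamma}_0^2).
\]

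The key geometric observation for semiconcavity is that the reparametrization $\check{\gamma}_0 \mapsto 1/\check{\gamma}_0$ is strictly convex on $(0,\infty)$: for $\check{\gamma}_0 \pm h \geq \check{\gamma}_{\min}$ the identity
\[
  \frac{1}{2}\Bigl(\frac{1}{\check{\gamma}_0+h} + \frac{1}{\check{\gamma}_0-h}\Bigr) - \frac{1}{\check{\gamma}_0} = \frac{h^2}{\check{\gamma}_0(\check{\gamma}_0^2 - h^2)}
\]
holds, with the right-hand side non-negative and of order $h^2$ uniformly on $[\check{\gamma}_{\min},\infty)$, since $(\check{\gamma}_0-h)(\check{\gamma}_0+h) \geq \check{\gamma}_{\min}^2$. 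Using $\check{v}(\check{\gamma}_0) = v(1/\check{\gamma}_0)$, Jensen's inequality for the concave $v$, and then its $L_v$-Lipschitz bound from Theorem~\ref{thm:regularity-value-function}, I would convert this geometric gap into the semiconcave estimate
\[
  \check{v}(\check{\gamma}_0+h) + \check{v}(\check{\gamma}_0-h) - 2\check{v}(\check{\gamma}_0) \leq 2\Bigl[v\bigl(\tfrac{1}{2}(1/(\check{\gamma}_0+h) + 1/(\check{\gamma}_0-h))\bigr) - v(1/\check{\gamma}_0)\Bigr] \leq \frac{2 L_v}{\check{\gamma}_{\min}^3}\, h^2.
\]

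Once both ingredients are in place, the $C^{1,1}$ regularity follows from classical results in semiconcavity theory: convexity provides monotone one-sided derivatives $\check{v}'_-\le \check{v}'_+$, and the semiconcave bound forces $\check{v}'_+ = \check{v}'_-$ at every point while yielding a Lipschitz estimate on the resulting derivative; see, e.g., \cite[Chapter 3]{cannarsa2004semiconcave}. The main obstacle I anticipate is the semiconcavity step, where one has to upgrade the merely concave and Lipschitz regularity of $v$ into a quadratic upper bound for $\check{v}$; the explicit curvature of the map $1/\cdot$ does exactly the work needed, and the remainder of the argument, including the passage from the $C^{1,1}$ regularity of $\check{v}$ back to that of $v$ via the smooth change of variables on $(\check{\gamma}_{\min},\infty)$, is comparatively routine.
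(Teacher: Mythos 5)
Your proposal is correct and its architecture matches the paper's: strict convexity is pulled back from the strict convexity of $\check{\cJ}_{det}$ using optimal controls from Theorem~\ref{thm:existence}, and the $C^{1,1}$ conclusion is drawn from the combination of convexity and semiconcavity via the Cannarsa--Sinestrari theory. The one place you genuinely diverge from the paper is in establishing semiconcavity. The paper observes that $\check{v} = v \circ \mathrm{inv}$ with $\mathrm{inv}(\check{\gamma}_0)=1/\check{\gamma}_0$ being $C^{1,1}$ on $[\check{\gamma}_{\min},\infty)$ and $v$ concave (hence semiconcave with linear modulus), and then invokes the abstract composition result \cite[Proposition 2.1.12 (ii)]{cannarsa2004semiconcave} to conclude that $\check{v}$ is semiconcave with linear modulus. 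You instead prove the second-difference bound directly: the explicit identity
\[
  \tfrac{1}{2}\Bigl(\tfrac{1}{\check{\gamma}_0+h} + \tfrac{1}{\check{\gamma}_0-h}\Bigr) - \tfrac{1}{\check{\gamma}_0} = \tfrac{h^2}{\check{\gamma}_0(\check{\gamma}_0^2-h^2)} \leq \tfrac{h^2}{\check{\gamma}_{\min}^3},
\]
Jensen's inequality for the concave $v$, and the $L_v$-Lipschitz estimate from Theorem~\ref{thm:regularity-value-function} yield the quadratic bound $\check{v}(\check{\gamma}_0+h) + \check{v}(\check{\gamma}_0-h) - 2\check{v}(\check{\gamma}_0) \leq 2L_v h^2/\check{\gamma}_{\min}^3$. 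This is a more elementary and self-contained route that makes the semiconcavity constant fully explicit, at the cost of tracking the algebra of the reciprocal map by hand; the paper's composition lemma buys brevity and reusability but is less transparent about where the linear modulus comes from. Both are valid, and the remaining bookkeeping (Lipschitz bound uniform on $[\check{\gamma}_{\min},\infty)$, symmetry in $h$, and well-definedness of the midpoint inside $[0,\gamma_{\max}]$) checks out.
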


\begin{proof}
Semiconcavity of $\check{v}$ follows from concavity of $v$.
Indeed, recall that $\check{v} = v\circ\mathrm{inv}$, where $\mathrm{inv}:[\check{\gamma}_{\min},\infty]\to [0,\gamma_{\max}]$ is given by
$\mathrm{inv}(\check{\gamma}_0):= 1/\check{\gamma}_0$.
Note that $\mathrm{inv}$ is continuously differentiable with Lipschitz derivative
on $[\check{\gamma}_{\min},\infty)$.
Moreover, $v$ is concave and hence in particular semiconcave with linear modulus.
But then \cite[Proposition 2.1.12 (ii)]{cannarsa2004semiconcave} implies that
$\check{v} = v\circ\mathrm{inv}$ is semiconcave with linear modulus.
Regarding strict convexity, let $\check{\gamma}_0,\check{\gamma}_0'\in[\check{\gamma}_{\max},\infty)$
with $\check{\gamma}_0\neq\check{\gamma}_0'$ and fix $\alpha\in(0,1)$.
Moreover, define $\check{\gamma}_0^\alpha := \alpha\check{\gamma}_0 + (1-\alpha)\check{\gamma}_0'$.
Denote by $h,h',h^\alpha$ continuous optimal controls for 
$\check{\gamma}_0,\check{\gamma}_0'$ and $\check{\gamma}_0^\alpha$,
respectively, which exist by Theorem~\ref{thm:existence}.
Then we conclude that
\begin{align*}
  \check{v}(\check{\gamma}_0^{\alpha})
  = \check{\cJ}_{det}\bigl(h^{\alpha},\check{\gamma}_0^{\alpha}\bigr)
  &\leq \check{\cJ}_{det}\bigl(\alpha h + (1-\alpha)h',\check{\gamma}_0^{\alpha}\bigr)
  \\
 & < \alpha\check{\cJ}_{det}\bigl(h,\check{\gamma}_0\bigr)
    + (1-\alpha)\check{\cJ}_{det}\bigl(h',\check{\gamma}_0'\bigr)
  = \alpha\check{v}(\check{\gamma}_0) + (1-\alpha)\check{v}(\check{\gamma}_0')
\end{align*}
by the strict convexity of $\check{\cJ}_{det}$.
Thus $\check{v}$ is strictly convex on $[\check{\gamma}_{\min},\infty)$,
which extends to $[\check{\gamma}_{\min},\infty]$ by the continuity of $\check{v}$.
Finally, the continuous differentiability of $\check{v}$ with Lipschitz continuous derivative
is a consequence of the semiconcavity with linear modulus and the convexity of $\check{v}$;
see \cite[Theorem 3.3.7]{cannarsa2004semiconcave}.
\end{proof}

The following two corollaries are immediate from the strict convexity of
$h\mapsto\check{\cJ}_{det}(h;\check{\gamma}_0)$ for $\check{\gamma}_0\in[\check{\gamma}_{\min},\infty)$
and the fact that \eqref{eq:reparameterized-control-problem} is a reparametrization of
\eqref{eq:def-reduced-value-function}.

\begin{corollary}\label{cor:unique-optimal-control}
For every initial condition $\gamma_0\in(0,\gamma_{\max}]$, there exists a unique continuous control 
$h^*\in\cA^{obs}_{det}$ for the reduced control problem \eqref{eq:def-reduced-value-function}
with value function $v(\gamma_0)$.
\end{corollary}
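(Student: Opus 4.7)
The plan is to deduce Corollary \ref{cor:unique-optimal-control} by combining two ingredients already available in the excerpt: existence of a continuous optimizer from Theorem \ref{thm:existence}, and strict convexity of the reparametrized cost functional $\check{\cJ}_{det}$ in the control argument from the preceding proposition. The reparametrization $\check{\gamma}_0 = 1/\gamma_0$ is a bijection between the reduced problem on $(0,\gamma_{\max}]$ and the reparametrized problem on $[\check{\gamma}_{\min},\infty)$ with $\cJ_{det}(h;\gamma_0) = \check{\cJ}_{det}(h;1/\gamma_0)$, so the two problems have identical sets of minimizers for any fixed initial condition; this was already recorded immediately after the introduction of \eqref{eq:reparameterized-control-problem}.

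First I would invoke Theorem \ref{thm:existence} to produce a continuous optimal $h^*\in\cA^{obs}_{det}$ for $v(\gamma_0)$, which by the bijection is also optimal for $\check{v}(1/\gamma_0)$. For uniqueness, suppose there exist two continuous optimal controls $h_1,h_2\in\cA^{obs}_{det}$ for $v(\gamma_0)$ with $h_1\neq h_2$. Both are then optimal for $\check{v}(\check{\gamma}_0)$ with $\check{\gamma}_0 = 1/\gamma_0\in[\check{\gamma}_{\min},\infty)$, and the pairs $(h_i,\check{\gamma}_0)$ lie in $(\cA^{obs}_{det}\times[\check{\gamma}_{\min},\infty))\cap \cS$ because $h_i$ is continuous and bounded by $h_{\max}$, so $h_i\in\cC^0$. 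Consider the midpoint $\bar h := (h_1+h_2)/2$. Since $[0,h_{\max}]$ is convex, $\bar h\in\cA^{obs}_{det}$, and $\bar h$ is continuous. By the strict convexity of $\check{\cJ}_{det}$ proved in the preceding proposition, applied to the distinct points $(h_1,\check{\gamma}_0)$ and $(h_2,\check{\gamma}_0)$ with $\alpha = 1/2$,
\[
  \check{\cJ}_{det}(\bar h;\check{\gamma}_0)
  < \tfrac{1}{2}\check{\cJ}_{det}(h_1;\check{\gamma}_0) + \tfrac{1}{2}\check{\cJ}_{det}(h_2;\check{\gamma}_0)
  = \check{v}(\check{\gamma}_0),
\]
contradicting optimality. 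Translating back through the reparametrization, $h_1 = h_2$ for $v(\gamma_0)$.

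There is essentially no obstacle here beyond bookkeeping: the nontrivial work (existence, strict convexity) is done, and one only needs to verify that the midpoint of two continuous admissible controls is again a continuous admissible control, and that both controls give cost functionals lying in the domain on which strict convexity was established. The restriction $\gamma_0\in(0,\gamma_{\max}]$ corresponds to $\check{\gamma}_0\in[\check{\gamma}_{\min},\infty)$, which is precisely the range where strict convexity of $\check{\cJ}_{det}$ in the first argument applies; the boundary case $\gamma_0 = 0$ (equivalently $\check{\gamma}_0 = \infty$) is excluded because the reparametrized cost involves $\bar a/\check{\gamma}$ and the strict-convexity argument above requires $\check{\gamma}_0<\infty$. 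This explains the half-open interval in the statement.
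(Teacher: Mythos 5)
Your proof is correct and follows the same route the paper intends: existence via Theorem~\ref{thm:existence}, and uniqueness by passing to the conditional-precision parametrization and running the midpoint argument off the strict convexity of $\check{\cJ}_{det}$ on $(\cA^{obs}_{det}\times[\check{\gamma}_{\min},\infty))\cap\cS$. The paper states the corollary is "immediate" from precisely these two ingredients; you have simply written out the bookkeeping, including the correct observation that the restriction $\gamma_0>0$ is what keeps $\check{\gamma}_0=1/\gamma_0$ finite and hence inside the domain on which strict convexity was established.
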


\begin{corollary}\label{cor:regularity-original-value-function}
The value function $v$ is continuously differentiable on $(0,\gamma_{\max})$.
\end{corollary}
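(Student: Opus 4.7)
The plan is to leverage the reparametrization identity $v(\gamma) = \check{v}(1/\gamma)$ together with the regularity of $\check{v}$ established in the preceding theorem. Since we have just shown that $\check{v}$ is continuously differentiable on $(\check{\gamma}_{\min},\infty)$ with Lipschitz derivative, and since the inversion map $\mathrm{inv}:\gamma\mapsto 1/\gamma$ is infinitely differentiable on $(0,\infty)$, the chain rule immediately produces continuous differentiability of the composition $v = \check{v}\circ\mathrm{inv}$ on any open set where both factors are smooth.

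The concrete steps I would carry out are as follows. First, I would verify that $\mathrm{inv}$ maps $(0,\gamma_{\max})$ bijectively onto $(\check{\gamma}_{\min},\infty)$, so that the composition $\check{v}\circ\mathrm{inv}$ is well-defined on $(0,\gamma_{\max})$ and agrees with $v$ there by the identity $v(\gamma) = \check{v}(1/\gamma)$ established in Section~\ref{sec:reduced-control-problem}. Second, I would apply the classical chain rule: for any $\gamma\in(0,\gamma_{\max})$, the derivative of $v$ at $\gamma$ exists and is given by
\[
  v'(\gamma) = -\frac{1}{\gamma^{2}}\check{v}'(1/\gamma).
\]
Since $\check{v}'$ is continuous on $(\check{\gamma}_{\min},\infty)$ and $\gamma\mapsto -1/\gamma^{2}$ is continuous on $(0,\gamma_{\max})$, the right-hand side is continuous in $\gamma$, establishing the claim.

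I do not expect any genuine obstacle here: the only mild point is to check that the open interval $(0,\gamma_{\max})$ is precisely the preimage under $\mathrm{inv}$ of the open interval $(\check{\gamma}_{\min},\infty)$ on which the preceding theorem guarantees continuous differentiability of $\check{v}$. The boundary points $\gamma=0$ and $\gamma=\gamma_{\max}$ correspond to $\check{\gamma}_{0}=\infty$ and $\check{\gamma}_{0}=\check{\gamma}_{\min}$ respectively, which lie outside the open interval where $\check{v}$ is known to be continuously differentiable, so they are appropriately excluded from the statement of the corollary.
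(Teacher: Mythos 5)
Your proof is correct and takes essentially the same approach as the paper: the paper states the corollary is immediate from the reparametrization $v(\gamma) = \check v(1/\gamma)$ together with the preceding theorem, and your argument simply spells out the chain rule and the identification of $(0,\gamma_{\max})$ with $(\check\gamma_{\min},\infty)$ under inversion, which is exactly what the paper leaves implicit.
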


\subsection{Characterization of the Optimal Control and Verification Theorems}\label{sec:verification}
With the differentiability of $v$ and the existence of a unique continuous control eastablished,
the next step is to identify the optimal control in terms of a feedback function.
We begin by introducing a transform similar to the Legendre--Fenchel transformation
of the cost function $c$.
Define $c^{*}:[0,M_{0}]\to[0,\infty)$ as
\begin{equation*}
	c^{*}(x):=\max_{h\in[0,h_{\max}]}\bigl\{hx-c(h)\bigr\}.
\end{equation*}
The following proposition gathers some useful properties of $c^*$.

\begin{proposition}\label{prop:convex-conjugate}
For $x\in[0,M_0]$ and with $\fh:[0,M_0]\to[0,h_{\max}]$ defined as
\[
	\fh(x) :=\arg\max_{h\in[0,h_{\max}]}\bigl\{hx-c(h)\bigr\}
		=\begin{cases}
			0, & \text{if }x< c'(0),\\
			(c')^{-1}(x), & \text{if }x\geq c'(0),
		\end{cases}
\]
it holds that
 \begin{align*}
		c^{*}(x) & =\begin{cases}
			-c(0), & \text{if }x< c'(0),\\
			(c')^{-1}(x) x - c\bigl((c')^{-1}(x)\bigr),
			& \text{if }x\geq c'(0),
		\end{cases}\\
		(c^{*})'(x)	& =\begin{cases}
			0, & \text{if }x< c'(0),\\
			(c')^{-1}(x), & \text{if }x\geq c'(0),
		\end{cases}\\
		\fh'(x)&= \begin{cases}
			0, & \text{if }x< c'(0),\\
			\frac{1}{c''((c')^{-1}(x))}, & \text{if }x\geq c'(0).
		\end{cases}
	\end{align*}
Furthermore, $-c^{*}$ is concave and even strictly concave on $[c'(0),M_{0}]$
and $\fh$ is increasing and even strictly increasing on $[c'(0),M_{0}]$.
\end{proposition}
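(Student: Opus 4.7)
The plan is to reduce everything to a first-order analysis of the concave function $h\mapsto hx-c(h)$ and then to invoke standard facts about Legendre--Fenchel transforms. The starting point is that, by Assumption~\ref{ass:cost}, $c$ is strictly convex and $C^2$, so $c'$ is a continuous strictly increasing bijection from $[0,h_{\max}]$ onto $[c'(0),c'(h_{\max})]=[c'(0),M_0]$. Hence, for fixed $x\in[0,M_0]$, the derivative with respect to $h$ of $hx-c(h)$ equals $x-c'(h)$, which is strictly decreasing in $h$ and vanishes at $h=(c')^{-1}(x)$ whenever $x\geq c'(0)$. Combined with the constraint $h\in[0,h_{\max}]$, this yields the two-case formula for the unique maximizer $\fh(x)$: the boundary $0$ is selected when $x<c'(0)$, and the interior point $(c')^{-1}(x)$ is selected when $x\geq c'(0)$, which automatically lies in $[0,h_{\max}]$ since $x\leq M_0=c'(h_{\max})$. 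Substituting $\fh(x)$ back into $hx-c(h)$ gives the piecewise formula for $c^*(x)$ directly.

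Next, I would compute $(c^*)'$ and $\fh'$. For $x<c'(0)$, $c^*\equiv -c(0)$ is constant, so $(c^*)'=0=\fh'$, and the same sign analysis shows that the left derivative at $c'(0)$ also vanishes. For $x>c'(0)$, I would apply the envelope theorem (or differentiate $c^*(x)=\fh(x)x-c(\fh(x))$ and use $c'(\fh(x))=x$) to obtain $(c^*)'(x)=\fh(x)=(c')^{-1}(x)$. Differentiating once more using the inverse function theorem, which is applicable because $c''>0$ on $[0,h_{\max}]$ by strict convexity of $c$, gives $\fh'(x)=1/c''((c')^{-1}(x))$. Right-continuity at $x=c'(0)$ of these expressions gives the stated formulas.

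For the concavity of $-c^*$, the cleanest route is to observe that $c^*$ is the pointwise supremum of the family $\{x\mapsto hx-c(h)\}_{h\in[0,h_{\max}]}$ of affine functions in $x$, hence convex; equivalently, $-c^*$ is concave. For strict concavity of $-c^*$ on $[c'(0),M_0]$, it suffices to show that $(c^*)'$ is strictly increasing there: this follows directly from the formula $(c^*)'(x)=(c')^{-1}(x)$ together with the strict monotonicity of $c'$ (which makes $(c')^{-1}$ strictly increasing on $[c'(0),M_0]$). Monotonicity of $\fh$ is immediate from the formula, with the strict monotonicity on $[c'(0),M_0]$ coming from the same strict monotonicity of $(c')^{-1}$.

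The routine calculations are the piecewise substitutions and the inverse function theorem step; the only point requiring minor care is ensuring the boundary values at $x=c'(0)$ match up, so that the formulas for $c^*,(c^*)',\fh,\fh'$ are consistent across the two pieces. This is handled by checking that $(c')^{-1}(c'(0))=0$, so the interior expression continuously reduces to the boundary expression at $x=c'(0)$, and the value of $(c^*)'$ there is $0$ from both sides. No step is genuinely difficult once the first-order analysis of $h\mapsto hx-c(h)$ is in place.
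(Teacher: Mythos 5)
Your proof is correct and follows essentially the same route as the paper's: identify $c'([0,h_{\max}]) = [c'(0),M_0]$, solve the first-order condition to obtain $\fh$, substitute back for $c^*$, and use the inverse function theorem for $(c^*)'$ and $\fh'$. The only cosmetic difference is that you justify concavity of $-c^*$ via the pointwise-supremum-of-affine-functions argument before upgrading to strict concavity using $(c^*)'=(c')^{-1}$, whereas the paper simply declares these properties immediate from the monotonicity and convexity of $c$; both are sound.
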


\begin{proof}
First observe that, by definition of $h_{\max}$, it holds that
\[
    c'(h_{\max}) = c'\bigl((c')^{-1}(M_0)\bigr) = M_0.
\]
Since $c'$ is (strictly) increasing, it follows that $c'([0,h_{\max}]) = [c'(0),M_0]$. But as
\[
  0 = \frac{\rd}{\rd h}\bigl[hx-c(h)\bigr] = x - c'(h),\quad h\in[0,\infty),
\]
we conclude that the maximizer in $c^{*}$ can be computed using the first-order condition
if $x\in[c'(0),M_0]$ and is equal to zero otherwise.
This yields the desired representation of $\fh$.
The representation of $c^*$ is immediate by plugging in the optimizer $\fh$.
The representation of $(c^*)'$ follows from differentiation and the inverse function theorem.
Finally, the expression for $\fh'$ is again obtained using the inverse function theorem
and the properties of $-c^*$ and $\fh$ are a immediate from the monotonicity
and convexity assumptions imposed on $c$. 
\end{proof}

We are now ready for the first of two verification theorems.
To begin with, let us recall that the HJB equation for the reduced problem is given by
\[
	- \delta w + \inf_{h\in[0,\infty)}\bigl\{
	f(\argdot,h)w' + \bar{a}\gamma + c(h)\bigr\} = 0\quad\text{on }[0,\gamma_{\max}]
\]
We will shortly show that $v$ solves the HJB equation.
Once this has been done, using the optimizer $\fh$ of the Legendre--Fenchel transform
given by Proposition~\ref{prop:convex-conjugate}, it follows that the infimum is attained
by $H^*:[0,\gamma_{\max}]\to[0,h_{\max}]$ given by
\begin{equation}\label{eq:feedback-map}
  H^{*}(\gamma_0)
  := \fh\bigl(\gamma_0^{2}v'(\gamma_0)\bigr),
  \quad \gamma_0\in [0,\gamma_{\max}],
\end{equation}
where $v'(0):=v'(0+)$ and $v'(\gamma_{\max}):=v'(\gamma_{\max}-)$ are taken to be the
right and left derivatives at $0$ and $\gamma_{\max}$, respectively.
These one-sided derivatives exist since $v$ is concave.
The function $H^*$ is our candidate for the optimal feedback control.
We begin by constructing a control and a controlled state from $H^*$.

\begin{lemma}\label{lem:optimal-state}
For any $\gamma_0\in[0,\gamma_{\max}]$, the ordinary differential equation
\[
  \rd\gamma^{*;\gamma_0}_t = f\bigl(\gamma^{*;\gamma_0}_{t},H^*(\gamma^{*;\gamma_0}_{t})\bigr)\rd t,\quad t\in[0,\infty),\quad \gamma^{*;\gamma_0}_0 = \gamma_0,
\]
admits a unique $[0,\gamma_{\max}]$-valued solution.
Moreover, $h^{*;\gamma_0} := H^*(\gamma^{*;\gamma_0})\in\cA^{obs}_{det}$ is continuous
and $\gamma^{*;\gamma_0}=\gamma^{h^{*;\gamma_0};\gamma_0}$.
\end{lemma}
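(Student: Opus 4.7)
The plan breaks into four steps: establishing continuity of $H^*$, proving existence of the closed-loop ODE via Peano plus an invariance argument, addressing uniqueness (the main obstacle), and finally verifying the identification $\gamma^{*;\gamma_0} = \gamma^{h^{*;\gamma_0};\gamma_0}$ by a standard open-loop uniqueness argument.

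Continuity of $H^*$ would follow from Corollary \ref{cor:regularity-original-value-function} combined with the fact that $v$ is concave and $L_v$-Lipschitz (Theorem \ref{thm:regularity-value-function}): concavity gives monotonicity of $v'$, so the one-sided limits $v'(0+)$ and $v'(\gamma_{\max}-)$ exist, are finite, and extend $v'$ continuously to all of $[0,\gamma_{\max}]$. Moreover $v' \in [0,L_v]$ (using that $v$ is strictly increasing), so $\gamma \mapsto \gamma^2 v'(\gamma) \in [0,M_0]$ is continuous, and composing with $\fh$, which is continuous on $[0,M_0]$ by Proposition \ref{prop:convex-conjugate}, yields continuity of $H^*$ on $[0,\gamma_{\max}]$.

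For existence, continuity of $F(\gamma) := f(\gamma, H^*(\gamma))$ and Peano's theorem deliver a local solution, extended globally by invariance of $[0,\gamma_{\max}]$: at the left boundary $F(0) = \sigma_2^2 \geq 0$, while at the right boundary monotonicity of $h \mapsto f(\gamma_{\max},h)$ and the strict inequality $\gamma_{\max} > \gamma^{0}_{\infty}$ from Lemma \ref{lem:gamma-bounded} give $F(\gamma_{\max}) \leq f(\gamma_{\max},0) < 0$. The main obstacle is uniqueness, since $v'$ is only known to be continuous, not Lipschitz, so $F$ need not be Lipschitz on all of $[0,\gamma_{\max}]$. The crucial observation that resolves this is the identity $\gamma^2 v'(\gamma) = -\check{v}'(1/\gamma)$, obtained from the reparametrization $v(\gamma) = \check{v}(1/\gamma)$ together with the chain rule. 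Because $\check{v}'$ is Lipschitz on $[\check{\gamma}_{\min},\infty)$ by the theorem preceding Corollary \ref{cor:unique-optimal-control}, the map $\gamma \mapsto -\check{v}'(1/\gamma)$, and hence $H^*$ and $F$, are locally Lipschitz on $(0,\gamma_{\max}]$, so Picard--Lindel\"of yields uniqueness on any compact subinterval bounded away from $0$. The only potential failure is a trajectory reaching $\gamma = 0$ in finite time; this is ruled out by the sign of $F$ near $0$: when $\sigma_2 > 0$ the vector field at $0$ is strictly positive and repels trajectories from the boundary, while when $\sigma_2 = 0$ one has $F(\gamma) < 0$ on $(0,\gamma_{\max}]$ with $F(0) = 0$, so trajectories in the interior decay monotonically toward $0$ without reaching it.

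The identification $\gamma^{*;\gamma_0} = \gamma^{h^{*;\gamma_0};\gamma_0}$ is then routine. Setting $h^{*;\gamma_0}_t := H^*(\gamma^{*;\gamma_0}_t)$ produces a continuous, $[0,h_{\max}]$-valued function of $t$, hence a member of $\cA^{obs}_{det}$. Both $\gamma^{*;\gamma_0}$ and $\gamma^{h^{*;\gamma_0};\gamma_0}$ satisfy the same open-loop ODE $\dot\gamma_t = f(\gamma_t, h^{*;\gamma_0}_t)$ with the same initial condition $\gamma_0$, and since $f$ is Lipschitz in $\gamma$ uniformly in $h$ on the compact product $[0,\gamma_{\max}] \times [0,h_{\max}]$, this open-loop equation has a unique solution, forcing the two trajectories to coincide.
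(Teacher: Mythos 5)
Your proof is correct and takes a more careful route than the paper's own argument. The paper simply asserts that the right-hand side $F(\gamma) = f(\gamma, H^*(\gamma))$ is locally Lipschitz in $\gamma$, then invokes Picard--Lindel\"of and the same invariance argument as in Lemma~\ref{lem:gamma-bounded} to get global existence and uniqueness in one step; it offers no justification for the Lipschitz claim, which is not obvious since $v'$ is only known to be continuous. You instead separate existence (Peano via continuity of $H^*$, which you correctly trace through Corollary~\ref{cor:regularity-original-value-function}, the concavity and Lipschitz bounds from Theorem~\ref{thm:regularity-value-function}, and Proposition~\ref{prop:convex-conjugate}) from uniqueness, and you supply the missing substance for the Lipschitz claim by using the identity $\gamma^2 v'(\gamma) = -\check{v}'(1/\gamma)$ together with the Lipschitz continuity of $\check{v}'$. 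This is exactly the right tool --- the paper does use this identity, but only later in Lemma~\ref{lem:feedback-map}, not here where it is actually needed. Two small remarks. First, the composition of $\fh$ with $\gamma \mapsto -\check{v}'(1/\gamma)$ is Lipschitz only because $\fh$ itself is Lipschitz on $[0,M_0]$, which by the formula for $\fh'$ in Proposition~\ref{prop:convex-conjugate} requires $c''>0$ on $[0,h_{\max}]$; this is implicit in the paper's statement of that proposition but not in Assumption~\ref{ass:cost}, so it is a shared background hypothesis of both your proof and the paper's. Second, for the initial condition $\gamma_0 = 0$ with $\sigma_2 > 0$ you remark that the vector field ``repels trajectories from the boundary'', which establishes that the trajectory enters $(0,\gamma_{\max}]$ but does not by itself give uniqueness of the escape; what closes this is the standard fact that a scalar autonomous ODE with a continuous, nonvanishing vector field near a point has a unique solution through that point (via separation of variables $t(\gamma) = \int_0^\gamma \rd s/F(s)$), after which your Picard--Lindel\"of argument on the interior takes over. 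With those two points noted, the argument is sound, and the identification $\gamma^{*;\gamma_0} = \gamma^{h^{*;\gamma_0};\gamma_0}$ by uniqueness for the open-loop ODE matches what the paper does.
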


\begin{proof}
With the right-hand side of the differential equation being locally Lipschitz continuous in $\gamma$,
it follows that it admits at least a local solution.
Moreover, since $H^*$ is $[0,h_{\max}]$-valued, it follows as in the proof of 
Lemma~\ref{lem:gamma-bounded} that this local solution is non-negative and upper bounded
by the uncontrolled conditional variance $\gamma^0$, hence $[0,\gamma_{\max}]$-valued.
Thus, in particular, $\gamma^{*;\gamma_0}$ exists $[0,\infty)$ and is uniquely determined.
With this, it is obvious that $h^* := H^*(\gamma^{*;\gamma_0})\in\cA^{obs}_{det}$,
and continuity follows from the continuity of both $\fh$ and $v'$.
The identity $\gamma^{*;\gamma_0}=\gamma^{h^*;\gamma_0}$ follows directly from definition of $h^*$ and uniqueness.
\end{proof}

We now complete the discussion of the reduced problem by showing that the value function $v$
solves the associated HJB equation \eqref{eq:HJB-reduced-problem}
and that the feedback control $h^*$ constructed in Lemma~\ref{lem:optimal-state}
is the unique continuous optimal control.

\begin{theorem}
\label{th:verification-deterministic}
The value function $v$ solves the reduced HJB equation \eqref{eq:HJB-reduced-problem}.
Moreover, for any initial condition $\gamma_0\in[0,\gamma_{\max}]$,
the feedback control $h^{*;\gamma_0}$ constructed in Lemma~\ref{lem:optimal-state}
is the unique continuous optimal control.
\end{theorem}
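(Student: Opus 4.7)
The plan is to first establish that $v$ satisfies the HJB equation \eqref{eq:HJB-reduced-problem} in the classical sense, and then to carry out a verification argument identifying the feedback control $h^{*;\gamma_0}$ from Lemma~\ref{lem:optimal-state} as optimal.

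For the first step, I would verify the dynamic programming principle
\[
  v(\gamma_0) = \inf_{h\in\cA^{obs}_{det}}\left\{\int_0^T e^{-\delta t} k(\gamma_t^{h;\gamma_0}, h_t)\,\rd t + e^{-\delta T} v(\gamma_T^{h;\gamma_0})\right\},
\]
which follows by standard splitting arguments for deterministic infinite-horizon problems with bounded, Lipschitz data; see e.g.\ \cite{bardi1997optimal}. Since $v$ is continuously differentiable on $(0,\gamma_{\max})$ by Corollary~\ref{cor:regularity-original-value-function}, I would combine the DPP with constant controls (to obtain the sub-solution inequality) and with an optimal continuous control from Theorem~\ref{thm:existence} (to obtain the super-solution inequality), and then pass to the limit $T\downarrow 0$ to obtain the HJB equation pointwise on the interior. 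Continuity of $v$ together with the existence of one-sided derivatives at the endpoints, guaranteed by concavity, extends the equation to the closed interval $[0,\gamma_{\max}]$.

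For the second step I would perform the verification. For an arbitrary $h\in\cA^{obs}_{det}$, the map $t\mapsto e^{-\delta t} v(\gamma_t^{h;\gamma_0})$ is absolutely continuous, and the chain rule yields
\[
  e^{-\delta T} v(\gamma_T^{h;\gamma_0}) - v(\gamma_0) = \int_0^T e^{-\delta t}\bigl(-\delta v(\gamma_t^{h;\gamma_0}) + f(\gamma_t^{h;\gamma_0},h_t)\, v'(\gamma_t^{h;\gamma_0})\bigr)\,\rd t.
\]
By the HJB equation and the definition \eqref{eq:feedback-map} of $H^*$ via the Legendre--Fenchel optimizer $\fh$, the integrand is bounded below by $-e^{-\delta t} k(\gamma_t^{h;\gamma_0}, h_t)$, with equality whenever $h_t = H^*(\gamma_t^{h;\gamma_0})$ almost everywhere. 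Using that $v$ is bounded on $[0,\gamma_{\max}]$ by Theorem~\ref{thm:regularity-value-function}, sending $T\to\infty$ yields $v(\gamma_0) \leq \cJ_{det}(h;\gamma_0)$ for every admissible $h$. Evaluating the identity at $h=h^{*;\gamma_0}$ produces equality throughout, hence $\cJ_{det}(h^{*;\gamma_0}; \gamma_0) = v(\gamma_0)$. Uniqueness of $h^{*;\gamma_0}$ among continuous admissible controls is then immediate from Corollary~\ref{cor:unique-optimal-control}.

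I expect the main technical obstacle to be handling the verification and the HJB at the boundary points $\gamma\in\{0,\gamma_{\max}\}$, where $v$ is only known to be continuous with one-sided derivatives. I would address this by approximating from interior initial data, using continuous dependence of the trajectories on the initial condition together with continuity of $v$ and of $H^*$, and by exploiting that $f(0,h)=\sigma_2^2\geq 0$ so that the optimal trajectory either leaves $\gamma=0$ immediately (when $\sigma_2>0$) or stays at zero (when $\sigma_2=0$), both of which can be verified directly. A secondary subtlety is that the chain rule above requires $v$ to be $C^1$ along the trajectory, which I would resolve by a localization argument ensuring the trajectory stays in the open interior except possibly at the endpoints, combined with the monotone convergence of the integral.
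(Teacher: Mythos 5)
Your proposal follows essentially the same route as the paper's proof: both invoke the dynamic programming principle from Bardi--Capuzzo-Dolcetta, use the $C^1$ regularity of $v$ (Corollary~\ref{cor:regularity-original-value-function}) to differentiate $s\mapsto e^{-\delta s}v(\gamma_s^{h;\gamma_0})$ along trajectories, send the horizon to zero to obtain the HJB equation pointwise with equality along the optimal control, and then complete the verification using the feedback map from Lemma~\ref{lem:optimal-state} together with uniqueness from Corollary~\ref{cor:unique-optimal-control}. The only cosmetic difference is that you make the $T\to\infty$ verification step explicit (which the paper dismisses as ``standard arguments'') and you flag the boundary behavior at $\gamma\in\{0,\gamma_{\max}\}$, which the paper handles silently by defining the one-sided derivatives in \eqref{eq:feedback-map}.
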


\begin{proof}
Following \cite[Proposition III.2.5]{bardi1997optimal},
the value function satisfies the dynamic programming principle (DPP)
\[
  v(\gamma_0) = \inf_{h\in\cA^{obs}_{det}} \Bigl[\int_0^t e^{-\delta s} k\bigl(\gamma_s^{h;\gamma_0},h_s\bigr)\rd s + e^{-\delta t}v\bigl(\gamma_s^{h;\gamma_0}\bigr)\Bigr],\quad t\in[0,\infty).
\]
Now let $h$ be any continuous control.
The DPP implies
\[
  \int_{0}^{t}e^{-\delta s}k(\gamma_s^{h;\gamma_0},h_s)\rd s
  \geq -\bigl(e^{-\delta t}v(\gamma^{h;\gamma_0}_{t}) - v(\gamma_0)\bigr)\\
  = \int_0^te^{-\delta s}\bigl(\delta v(\gamma^{h;\gamma_0}_{s}) - v'(\gamma^{h;\gamma_0}_{s})f(\gamma^{h;\gamma_0}_{s},h_s) \bigr)\rd s
\]
or, equivalently,
\[
  0 \leq \frac{1}{t}\int_0^t e^{-\delta s}\bigl( - \delta v(\gamma^{h;\gamma_0}_{s}) + f(\gamma^{h;\gamma_0}_{s},h_s)v'(\gamma^{h;\gamma_0}_{s}) + k(\gamma_s^{h;\gamma_0},h_s)\bigr) \rd s,\quad t>0,
\]
with equality if $h=h^*$ is the optimal continuous control.
By the mean value theorem and upon sending $t\downarrow 0$, it follows that
\begin{equation}\label{eq:feedback}
 0 \leq - \delta v(\gamma_0) + f(\gamma_0,h_0)v'(\gamma_0) + k(\gamma_0,h_0)
\end{equation}
with equality if $h_0 = h_0^*$ is the initial value of the optimal control.
We have therefore argued that
\[
  0 = - \delta v(\gamma_0) + \inf_{h\in[0,h_{\max}]}\bigl\{f(\gamma_0,h)v'(\gamma_0) + k(\gamma_0,h)\bigr\},\quad \gamma_0\in[0,\gamma_{\max}],
\]
which is to say that $v$ indeed solves the HJB equation.
Finally, notice that the feedback control $h^{*;\gamma_0}$ is constructed precisely
such that we have equality in \eqref{eq:feedback},
from which we conclude by standard arguments that $h^{*;\gamma_0}$ is optimal.
\end{proof}

Having solved the reduced problem, we can now return to the full information problem.
First, we observe that $v$ does indeed satisfy the HJB equation 
\[
  0 = - \delta v(\gamma_0) + \inf_{h\in[0,\infty)}\bigl\{f(\gamma_0,h)v'(\gamma_0) + k(\gamma_0,h)\bigr\},\quad \gamma_0\in[0,\gamma_{\max}],
\]
where $h$ is not restricted to the compact set $[0,h_{\max}]$.
This follows from the fact that
\[
  h \mapsto \frac{\rd}{\rd h} \bigl[ f(\gamma_0,h)v'(\gamma_0) + k(\gamma_0,h) \bigr]
  = -\gamma_0^2v'(\gamma_0) + c'(h)
\]
is strictly increasing on $[0,\infty)$ with its unique root contained in $[0,h_{\max}]$.
Next, recall that, in the full information case, the HJB equation takes the form
\[
  -\delta W + \inf_{(u,h)\in\cU}\Bigl\{\cL^{u,h}W + \frac{1}{2}\bigl(\kappa x^{2} + \rho u^{2}\bigr) + c(h)\Bigr\} = 0\qquad\text{on }\S=\R^2\times[0,\gamma_{\max}],
\]
where $\cU=\R\times[0,\infty)$.
From the discussion in Section~\ref{sec:reduction} and the fact that $v$ solves the HJB equation of the reduced problem,
we conclude that a classical solution of the HJB equation in the full information case is given by the function $W:\S\to\R$ defined as
\begin{equation}\label{eq:hjb-solution}
  W(x,m,\gamma) = a_1 x^2 + a_2m^2 + a_3 xm + b_1 x + b_2m + a_2\gamma + \frac{1}{\delta}\bigl(C_1 + a_2\sigma_2^2\bigr) + v(\gamma),\quad (x,m,\gamma)\in\S;
\end{equation}
see \eqref{eq:ansatz} and \eqref{eq:ansatz-w}.
With this, the optimizers of the infimum in the full information HJB equation are given
by the feedback functions $U^*:\R^2\to\R$ and $H^*:[0,\infty)\to[0,\infty)$ defined as
\begin{equation}\label{eq:optimal-feedback-maps}
  U^*(x,m) := - \frac{2a_1 x + a_3 m + b_1}{\rho}
  \quad\text{and}\quad
  H^*(\gamma) := \fh\bigl(\gamma^2v'(\gamma)\bigr),
  \qquad (x,m,\gamma)\in\S,
\end{equation}
where the function $\hat h$ is defined in Proposition \ref{prop:convex-conjugate}.
Observe that $H^*$ is the feedback map in \eqref{eq:feedback}.

\begin{lemma}\label{lem:optimal-state-full}
For any $z=(x,m,\gamma)\in\S$, the stochastic differential equation
\[
  \rd Z^{*;z}
  = \begin{pmatrix}
      m^{*;z}_{t} + U^*(X^{*;z}_t,m^{*;z}_t)\\
      \lambda(\bar{\mu} - m^{*;z}_{t})\\
      f\bigl(\gamma^{*;z}_{t},H^*(\gamma^{*;z}_t)\bigr)
	\end{pmatrix}\rd t
  + \begin{pmatrix}
      \sigma_{1} & 0\\
      \bar{\sigma}_{1}\gamma^{*;z}_{t} & \sqrt{H^*(\gamma^{*;z}_t)}\gamma^{*;z}_t\\
      0 & 0
    \end{pmatrix}\rd I^{*;z}_{t},
  \quad Z^{*;z}_0 = z,
  \quad t\in[0,\infty),
\]
with
\[
  \rd I^{*;z}_t
  = - \begin{pmatrix}
      \bar{\sigma}_{1}(m_{t}^{*;z} - \mu_{t})\\
      (m_{t}^{*;z} - \mu_{t})\sqrt{H^*(\gamma^{*;z}_t)}
    \end{pmatrix} \rd t
  + \begin{pmatrix}
      \rd B^{1}_{t}\\
      \rd B^{3}_{t}
    \end{pmatrix},
  \quad I^{*;z}_0 = 0,
  \quad t\in[0,\infty),
\]
admits a unique square-integrable solution $Z^{*;z} = (X^{*;z}_t,m^{*;z}_t,\gamma^{*;z}_t)$
taking values in $\S$.
Moreover, setting $u^{*;z} := U^*(X^{*;z},m^{*;z})$ and $h^{*;z}:= H^*(\gamma^{*;z})$
defines a pair of admissible controls $(u^*,h^*)\in\cA$.
Finally, $(u^*,h^*)$ is progressively measurable with respect to the filtration
$\cY^{0,h^*}\subseteq \cY^{u^*,h^*}$, implying that $\cY^{0,h^*}=\cY^{u^*,h^*}$.
\end{lemma}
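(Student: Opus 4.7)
The plan is to exploit the fact that the third component of $Z^{*;z}$ is fully deterministic, which decouples the system and reduces the problem to a linear SDE for $(X^{*;z},m^{*;z})$ driven by $B^1, B^3$ and forced by the exogenous Ornstein--Uhlenbeck process $\mu$. Concretely, Lemma~\ref{lem:optimal-state} already provides a deterministic $\gamma^{*;z}\colon[0,\infty)\to[0,\gamma_{\max}]$ solving the $\gamma$-ODE, so $h^{*;z}:=H^*(\gamma^{*;z})$ is a deterministic, continuous, $[0,h_{\max}]$-valued function of time.

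Since $h^{*;z}$ is deterministic and bounded, the pair $(0,h^{*;z})$ is easily verified to be pre-admissible. I then apply the Kalman--Bucy filter from Section~\ref{sec:transformation} to the observations $(X^0,Y^{h^{*;z}})$ to obtain the conditional mean $m^{*;z}:=m^{0,h^{*;z}}$, which is $\cY^{0,h^{*;z}}$-adapted; uniqueness of the Riccati ODE identifies its conditional variance with $\gamma^{*;z}$. Setting $u^{*;z}:=U^*(X^0,m^{*;z})$, which is affine and hence $\cY^{0,h^{*;z}}$-progressively measurable, and $X^{u^{*;z}}:=X^0+\int_0^{\cdot}u^{*;z}_s\,\rd s$, a direct computation plugging the filter dynamics and the state equation into the definition of $I^{*;z}$ shows that $(X^{u^{*;z}},m^{*;z},\gamma^{*;z})$ solves the SDE in the statement with the innovations $I^{*;z}$ coinciding with $I^{0,h^{*;z}}$ from \eqref{eq:def-innovations-process}.

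For admissibility, standard moment bounds for $X^0$ together with the observation that $m^{*;z}$, being a conditional expectation of the $L^2$-bounded Gaussian process $\mu$, is itself bounded in $L^2$ uniformly in $t$, and the affinity of $U^*$, yield at most polynomial growth of $\E[(X^{u^{*;z}}_t)^2]$; the conditions in \eqref{eq:pre-admissible} then follow from $\delta>0$. Since $u^{*;z}$ is $\cY^{0,h^{*;z}}$-progressively measurable and $X^{u^{*;z}}_t-X^0_t=\int_0^tu^{*;z}_s\,\rd s$, we have $\cY^{u^{*;z},h^{*;z}}\subseteq\cY^{0,h^{*;z}}$, and combined with the reverse inclusion from Remark~\ref{remark:filtration} this gives equality of the two filtrations and hence $(u^{*;z},h^{*;z})\in\cA$. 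Uniqueness of $Z^{*;z}$ reduces, via the unique deterministic $\gamma^{*;z}$, to strong uniqueness of a linear SDE for $(X,m)$ with bounded, deterministic, time-dependent coefficients forced by $\mu$, which follows from Gronwall's inequality.

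The main obstacle I anticipate is disentangling the circular dependence between the unknown $m^{*;z}$ and its own innovation process $I^{*;z}$ as written in the statement: the innovations serve as the driving noise but are themselves defined through $m^{*;z}$ and $\mu$. My construction resolves this circularity by first producing $m^{*;z}$ through the classical strong-formulation Kalman--Bucy filter against the pre-specified deterministic $h^{*;z}$, where the filter, its innovations, and the observation filtration $\cY^{0,h^{*;z}}$ are built simultaneously, and only then identifying, a posteriori, that the constructed innovations indeed agree with $I^{*;z}$.
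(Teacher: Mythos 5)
The overall architecture of your argument — solve the deterministic $\gamma$-ODE and thereby make $h^{*;z}$ a deterministic function of time, invoke the Kalman--Bucy filter under the zero state control to produce $m^{*;z}$, derive admissibility from moment bounds, and compare the filtrations $\cY^{0,h^{*;z}}$ and $\cY^{u^{*;z},h^{*;z}}$ using Remark~\ref{remark:filtration} in one direction and adaptedness of $u^{*;z}$ in the other — is essentially the paper's route. However, there is a genuine error in how you build the first component.

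You set $u^{*;z}:=U^*(X^0,m^{*;z})$ and then $X^{u^{*;z}}:=X^0+\int_0^\cdot u^{*;z}_s\,\rd s$. Plugging in, this process satisfies
\[
  \rd X^{u^{*;z}}_t = \bigl(m^{*;z}_t + U^*(X^0_t,m^{*;z}_t)\bigr)\rd t + \sigma_1\,\rd I^{1,*;z}_t,
\]
whereas the SDE in the statement requires the drift to contain $U^*(X^{*;z}_t,m^{*;z}_t)$, with the feedback evaluated at the \emph{controlled} state. Since $X^0_t\neq X^{u^{*;z}}_t$ for $t>0$ (the control is non-zero), your triple does \emph{not} solve the SDE in the lemma, and the subsequent verification theorem would not apply to the control you construct. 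The correct step, and what the paper in fact does, is to observe that $U^*$ is affine in $(x,m)$ and hence the equation
\[
  \rd X^{*;z}_t = \bigl(\mu_t + U^*(X^{*;z}_t,m^{*;z}_t)\bigr)\rd t + \sigma_1\,\rd B^1_t
\]
is a linear SDE (with $m^{*;z}$ already constructed and plugged in as an exogenous forcing term), admitting a unique strong solution; one then defines $u^{*;z}:=U^*(X^{*;z},m^{*;z})$ a posteriori. Your phrasing of the feedback as a function of $X^0$ skips this closed-loop step, which is precisely where the linearity of $U^*$ is needed. A secondary, smaller gap is that your appeal to ``standard moment bounds'' for admissibility glosses over the fact that $m^{*;z}$ and $X^{*;z}$ need explicit estimates in terms of the bounded $\gamma^{*;z}$ and $h^{*;z}$; the paper computes these explicitly, obtaining $\sup_t\E[(m^{*;z}_t)^2]<\infty$ and $\E[(X^{*;z}_t)^2]\le D(1+t)$, and it is worth spelling this out since the discounting condition in \eqref{eq:pre-admissible} hinges on it.
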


\begin{proof}
Throughout this proof, we suppress the initial condition $z$ to simplify the notation.
The existence of the third component of the solution $\gamma^{*}$ has already been established
in Lemma~\ref{lem:optimal-state}.
Using $h^{*}= H^*(\gamma^{*})$ and the definition of $I^{*}$,
the equations for the first two components become
\[
  \rd \begin{pmatrix}X^{*}_t\\ m^{*}_t\end{pmatrix}
  = \begin{pmatrix}
      \mu_{t} + U^*(X^{*},m^{*})\\
      \lambda(\bar{\mu} - m^{*}_{t}) - \gamma^{*}_{t}(m_{t}^{*} - \mu_{t})(\bar{\sigma}_{1}^2 + h^{*}_t)
	\end{pmatrix}\rd t
  + \begin{pmatrix}
      \sigma_{1} & 0\\
      \bar{\sigma}_{1}\gamma^{*}_{t} & \sqrt{h^{*}_t}\gamma^{*}_t
    \end{pmatrix}
    \begin{pmatrix}
      \rd B^{1}_{t}\\
      \rd B^{3}_{t}
    \end{pmatrix}.
\]
With $U^*(X^{*},m^{*})$ being linear in $(X^{*},m^{*})$,
we conclude that a unique solution in the filtration $\F$ generated by $\mu_0$
and the Brownian motion $B$ exists.
However, rewriting the dynamics of $Z^*$ again in terms of the innovations process $I^*$,
it follows that $Z^*$ and $(u^*,h^*)$ are also adapted to the filtration generated by $I^*$,
which is a sub-filtration of $\cY^{u^*,h^*}$ as $I^*$ is $\cY^{u^*,h^*}$-adapted.
But $I^*$ does not depend on the control $u^*$, and hence $(Z^*,u^*,h^*)$
is also adapted to $\cY^{0,h^*}$.
As $\cY^{u^*,h^*}$ is the smallest filtration making $Z^*$ adapted we conclude that $\cY^{0,h^*}=\cY^{u^*,h^*}$.

To show that $(u^*,h^*)$ is admissible, it remains to verify that $e^{-\delta t}\E[(X_t^*)^2]\to 0$
as $t\to\infty$.
For this, we fix $t\in[0,\infty)$ and observe that $m^{*}_t$ is given explicitly by
\[
  m_t^{*} = e^{-\lambda t}\Bigl[m + \bar{\mu}\bigl(e^{\lambda t}-1\bigr)
    + \bar{\sigma}_1\int_0^t e^{\lambda s}\gamma^{*}_s\rd I^{1,*}_s
    + \int_0^t e^{\lambda s}\sqrt{h^{*}_s}\gamma^{*}_s\rd I^{2,*}_s \Bigr].
\]
Using this, a direct computation shows that
\[
  \E\bigl[(m_t^*)^2\bigr]
  = e^{-2\lambda t}\Bigl[\Bigl(m + \bar{\mu}\bigl(e^{\lambda t}-1\bigr)\Bigr)^2
    + \bar{\sigma}_1^2\int_0^t e^{2\lambda s}\bigl(\gamma^{*}_s\bigr)^2\rd s
    + \int_0^t e^{2\lambda s}h^{*}_s\bigl(\gamma^{*}_s\bigr)^2\rd s
    \Bigr].
\]
In particular, since both $h^*$ and $\gamma^*$ are bounded, there exists a constant $C>0$ such that
\[
  \sup_{t\in[0,\infty)}\E\bigl[(m_t^*)^2\bigr] \leq C.
\]
Similarly, $X^*_t$ is given explicitly by
\[
  X^*_t = e^{-\frac{2a_1}{\rho}t}\Bigl[x - \frac{1}{\rho}\int_0^t \bigl((a_3-\rho)m^*_s + b_1\bigr)e^{\frac{2a_1}{\rho}s}\rd s + \sigma_1\int_0^t e^{\frac{2a_1}{\rho}s} \rd I^{1,*}_s\Bigr],
\]
so that the Cauchy-Schwarz inequality yields 
\begin{align*}
  \E\bigl[(X_t^*)^2\bigr]
  &\leq 3 e^{-\frac{4a_1}{\rho}t}\Bigl[
    x^2 + \frac{2t}{\rho^2}\int_0^t \bigl((a_3-\rho)^2\E\bigl[(m^*_s)^2\bigr] + b_1^2\bigr)e^{\frac{4a_1}{\rho}s}\rd s
    + \sigma_1^2\int_0^t e^{\frac{4a_1}{\rho}s} \rd s
  \Bigr].
\end{align*}
Since the second moments of $m^*$ are uniformly bounded,
we conclude that there exists a constant $D>0$ such that
\[
  \E\bigl[(X_t^*)^2\bigr] \leq D(1+t),\quad t\in[0,\infty).
\]
In particular, it follows that
\[
  \lim_{t\to\infty}e^{-\delta t}\E\bigl[(X_t^*)^2\bigr] = 0,
\]
which concludes the proof.
\end{proof}

Having constructed a classical solution of the HJB equation and a candidate optimal feedback control,
the solution of the full information problem follows by standard arguments.

\begin{theorem}
The solution $W$ of the full information HJB equation \eqref{eq:HJB} given by \eqref{eq:hjb-solution} 
coincides with the value function $V$ of the full information problem given by
\eqref{eq:FI-value-function}.
Moreover, for any initial condition $z\in\S$,
the feedback control $(u^{*;z},h^{*;z})$ constructed in Lemma~\ref{lem:optimal-state-full} is optimal.
As a consequence, $(u^{*;z},h^{*;z})$ is also optimal for the original partial information problem
\eqref{eq:original-control-problem}.
\end{theorem}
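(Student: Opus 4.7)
The plan is a classical verification argument: apply Itô's formula to $t\mapsto e^{-\delta t}W(Z^{u,h;z}_t)$ for an arbitrary admissible control $(u,h)\in\cA$, use the fact that $W$ satisfies \eqref{eq:HJB} by construction to obtain a lower bound on the running cost, and then pass to the limit $T\to\infty$. Since the feedback maps $U^*$ and $H^*$ in \eqref{eq:optimal-feedback-maps} are the pointwise minimizers of the Hamiltonian in \eqref{eq:HJB}, all inequalities turn into equalities along $(u^{*;z},h^{*;z})$, yielding $V(z)=W(z)$ together with optimality.

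Concretely, $W$ is polynomial and hence $C^2$ in $(x,m)$, and $C^1$ in $\gamma$ by Corollary \ref{cor:regularity-original-value-function}. Since the $\gamma$-component of $Z^{u,h;z}$ has finite variation, the generalized Itô formula applies and gives
\[
  e^{-\delta T}W(Z^{u,h;z}_T)
  = W(z)+\int_0^T e^{-\delta t}\bigl(-\delta W+\cL^{u_t,h_t}W\bigr)(Z^{u,h;z}_t)\,\rd t+M_T,
\]
where $M$ is a continuous local martingale. Localizing by a sequence $\tau_n\nearrow\infty$ to remove $M$, taking expectations, bounding the integrand below via \eqref{eq:HJB} by $-\tfrac12(\kappa(X^u_t)^2+\rho u_t^2)-c(h_t)$, and passing $n\to\infty$ using monotone convergence on the nonnegative running cost yields
\[
  e^{-\delta T}\E\bigl[W(Z^{u,h;z}_T)\bigr]
  \geq W(z)-\E\int_0^T e^{-\delta t}\Bigl[\tfrac12\bigl(\kappa(X^u_t)^2+\rho u_t^2\bigr)+c(h_t)\Bigr]\rd t.
\]

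The main technical obstacle is the transversality condition $\lim_{T\to\infty}e^{-\delta T}\E[W(Z^{u,h;z}_T)]=0$ for every admissible $(u,h)$. Using the explicit form \eqref{eq:hjb-solution} together with $\gamma^h_T\in[0,\gamma_{\max}]$ and the boundedness of $v$, this reduces to controlling the second moments of $X^u_T$ and $m^{u,h}_T$ and their cross term. The $X^u$-part is handled directly by the admissibility condition \eqref{eq:pre-admissible}. For $m^{u,h}$, applying Itô's formula to $(m^{u,h}_t)^2$ and using $\gamma^h_t\leq\gamma_{\max}$, one obtains a bound of the form $\E[(m^{u,h}_T)^2]\leq C(1+T+\E\int_0^T h_s\,\rd s)$, so that the admissibility conditions $e^{-\delta T}T\to 0$ and $e^{-\delta T}\E\int_0^T h_s\rd s\to 0$ provide the required decay. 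The cross term is controlled by Cauchy--Schwarz. This step is where the precise form of admissibility enters nontrivially and is where I expect the most care to be needed.

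Letting $T\to\infty$ in the inequality and applying monotone convergence on the right-hand side then yields $\cJ(u,h;z)\geq W(z)$, hence $V(z)\geq W(z)$. For the reverse inequality, repeat the argument along the admissible pair $(u^{*;z},h^{*;z})$ constructed in Lemma \ref{lem:optimal-state-full}: by the very definition of $U^*$ and $H^*$, the Hamiltonian inequality is saturated pointwise, so every inequality above becomes an equality and one concludes $\cJ(u^{*;z},h^{*;z};z)=W(z)$. Combining both bounds gives $W=V$ and the optimality of $(u^{*;z},h^{*;z})$ for the full information problem. The analogous statement for the original partial information problem \eqref{eq:original-control-problem} is then immediate from the identification $\cJ((u,h);(x,m_0,\gamma_0))=J(u,h)$ noted in Remark \ref{rem:equivalent-problem}.
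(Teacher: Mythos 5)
Your proposal is correct and follows the same overall strategy as the paper: it is a classical verification argument that reduces to checking a transversality condition at $T\to\infty$, which in turn reduces to moment bounds on $(X^u,m^{u,h})$. The paper is more concise only because it invokes \cite[Theorem III.9.1]{fleming2006controlled}, which packages the It\^o/localization/monotone-convergence machinery and reduces the whole argument to verifying
\[
  \liminf_{t\to\infty} e^{-\delta t}\E\bigl[W(Z^{u,h}_t)\bigr]\leq 0\quad\text{for all }(u,h)\in\cA,
  \qquad
  \limsup_{t\to\infty} e^{-\delta t}\E\bigl[W(Z^{*}_t)\bigr]\geq 0,
\]
the second of which is trivial since $W\geq 0$. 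You rebuild exactly that machinery from scratch, which is fine but requires one extra care point you gloss over: after localizing, passing $\tau_n\to\infty$ in $\E[e^{-\delta(T\wedge\tau_n)}W(Z_{T\wedge\tau_n})]$ needs some uniform integrability of $W(Z_{T\wedge\tau_n})$ (e.g.\ via $\sup_{t\leq T}\E[W(Z_t)^{1+\eps}]<\infty$, which holds here because the moments of $X^u$ and $m^{u,h}$ can be controlled on compacts), not just monotone convergence on the running cost. The only other small difference is how you derive the bound on $\E[(m^{u,h}_T)^2]$: you apply It\^o to $(m^{u,h})^2$, whereas the paper uses the explicit variation-of-constants solution of the linear filtering SDE and the It\^o isometry. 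Both yield a bound of the form $C\bigl(1+T+\E\int_0^T h_s\,\rd s\bigr)$, which combined with the admissibility condition $e^{-\delta T}\E\int_0^T h_s\,\rd s\to 0$ gives the required decay, so the two routes are interchangeable.
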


\begin{proof}
This is a standard verification result and follows along classical arguments.
More precisely, according to \cite[Theorem III.9.1]{fleming2006controlled},
it suffices to show that
\begin{equation}\label{eq:liminf-proof-verification}
  \liminf_{t\to\infty} e^{-\delta t}\E\bigl[W(Z^{u,h}_t)\bigr] \leq 0
	\quad\text{for all }(u,h)\in\cA
\end{equation}
and
\begin{equation}\label{eq:limsup-proof-verification}
  \limsup_{t\to\infty} e^{-\delta t}\E\bigl[W(Z^{*}_t)\bigr] \geq 0,
\end{equation}
where $Z^{*}$ denotes the candidate optimal state process under the control $(u^{*;z},h^{*;z})$
and where we have suppressed the dependence of the state processes $Z^{u,h}$ and $Z^*$
on the initial condition $z$ in our notation.
First, observe that \eqref{eq:limsup-proof-verification} is trivial as $W\geq 0$.
Hence, we only have to verify \eqref{eq:liminf-proof-verification}.
For this, fix $(u,h)\in\cA$.
Note that $\gamma^{h}$ is bounded by Lemma~\ref{lem:gamma-bounded}, and
\[
  \lim_{t\to\infty} e^{-\delta t}\E\bigl[(X^{u}_t)^2\bigr] = 0
\]
by the admissibility of $(u,h)$; see \eqref{eq:pre-admissible}. Since $W$ is quadratic in $(x,m)$, it therefore suffices to show that
\[
  \lim_{t\to\infty} e^{-\delta t}\E\bigl[(m^{u,h}_t)^2\bigr] = 0.
\]
Just as in the proof of Lemma~\ref{lem:optimal-state-full}, one finds that
\[
  \E\bigl[(m_t^{u,h})^2\bigr]
  = e^{-2\lambda t}\biggl[\Bigl(m + \bar{\mu}\bigl(e^{\lambda t}-1\bigr)\Bigr)^2
    + \bar{\sigma}_1^2\E\Bigl[\int_0^t e^{2\lambda s}\bigl(\gamma^{h}_s\bigr)^2\rd s\Bigr]
    + \E\Bigl[\int_0^t e^{2\lambda s}h_s\bigl(\gamma^{h}_s\bigr)^2\rd s\Bigr]
    \biggr].
\]
Now $\gamma^{h}$ is bounded, so we conclude that there exists a constant $C>0$ such that
\[
  e^{-\delta t}\E\bigl[(m_t^{u,h})^2\bigr] \leq e^{-\delta t}C\Bigl(1 + \E\Bigl[\int_0^t h_s \rd s\Bigr]\Bigr).
\]
But the right-hand side vanishes as $t\to\infty$ by the admissibility of $h$, hence concluding the proof.
\end{proof}

\section{Properties of the Optimal Information Acquisition Rate}\label{sec:properties}
We now turn to the study of the qualitative properties of the reduced and full information problem.
Specifically, we are interested in the properties of the feedback map $H^{*}$
and the value function $v$ of the reduced problem.
In addition, we analyze the behavior of the dynamical system consisting of the optimal control $h^{*}$
and the optimally controlled conditional variance $\gamma^{*}$.
In particular, we show that $\gamma^*$ converges to a unique equilibrium $\equi$
and study the behavior of $h^*$ and $\gamma^*$ near $\equi$.
We begin in Section~\ref{sec:properties-reduced} by restricting our considerations
to the reduced problem \eqref{eq:def-reduced-value-function}.
In Section~\ref{sec:properties-original}, we then study the implications
for the full information problem \eqref{eq:FI-value-function}.

\subsection{Qualitative Properties of the Reduced Problem}\label{sec:properties-reduced}
We begin our study of the optimal information acquisition rate by taking a closer look
at the properties feedback map $H^*$.
\begin{lemma}\label{lem:feedback-map}
Defining $\gamma_{\cD}\in[0,\gamma_{\max}]$ as
\begin{equation}
    \gamma_{\cD} := \begin{cases}
        1/(\check{v}')^{-1}(-c'(0))\quad \textup{ if } -c'(0)\in\check{v}'([0,\gamma_{\max}]),\\
        \gamma_{\max} \quad \textup{ else},
    \end{cases}
\end{equation}
we have that $H^{*} = 0$ on $[0,\gamma_{\cD}]$ and $H^*$ is strictly increasing on $[\gamma_{\cD},\gamma_{\max}]$.
\end{lemma}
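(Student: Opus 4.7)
The plan is to exploit the explicit compositional form $H^*(\gamma_0) = \fh(\gamma_0^2 v'(\gamma_0))$ from \eqref{eq:feedback-map}, together with Proposition~\ref{prop:convex-conjugate}, which tells us that $\fh$ vanishes on $[0,c'(0)]$ and is strictly increasing on $[c'(0),M_0]$. Consequently, $H^*(\gamma_0)=0$ precisely when the argument $\gamma_0^2 v'(\gamma_0)$ lies at or below the threshold $c'(0)$, while on the set where this argument exceeds $c'(0)$, $H^*$ inherits strict monotonicity from $\fh$ provided $\gamma_0\mapsto \gamma_0^2 v'(\gamma_0)$ itself is strictly increasing.

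The core computation is to recast the argument of $\fh$ in terms of the reparametrized value function $\check{v}$. Using $v(\gamma_0)=\check{v}(1/\gamma_0)$ and the chain rule, one finds
\[
  \gamma_0^2 v'(\gamma_0) = -\check{v}'(1/\gamma_0),\qquad \gamma_0\in(0,\gamma_{\max}],
\]
which is legitimate because $v\in C^1((0,\gamma_{\max}))$ by Corollary~\ref{cor:regularity-original-value-function} and thus $\check{v}\in C^1((\check{\gamma}_{\min},\infty))$. Since $\check{v}$ is strictly convex, $\check{v}'$ is strictly increasing, so $\gamma_0\mapsto -\check{v}'(1/\gamma_0)$ is strictly increasing on $(0,\gamma_{\max}]$. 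Moreover, the Lipschitz bound $v'\leq L_v$ from Theorem~\ref{thm:regularity-value-function} ensures $\gamma_0^2 v'(\gamma_0)\leq \gamma_{\max}^2 L_v = M_0$, so the argument of $\fh$ always lies in its domain.

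The remaining step is to identify the cutoff. Rewriting the condition $\gamma_0^2 v'(\gamma_0)\leq c'(0)$ as $\check{v}'(1/\gamma_0)\geq -c'(0)$ and using the strict monotonicity of $\check{v}'$, one sees that if $-c'(0)$ lies in the range of $\check{v}'$ then there is a unique precision $\check{\gamma}_{\cD}^*=(\check{v}')^{-1}(-c'(0))$ at which equality holds, and the inequality switches direction at $\gamma_{\cD}:=1/\check{\gamma}_{\cD}^*$. One then reads off $H^*=0$ on $[0,\gamma_{\cD}]$ and, on $[\gamma_{\cD},\gamma_{\max}]$, strict monotonicity of $H^*$ as the composition of the two strictly increasing maps $\fh$ and $\gamma_0\mapsto \gamma_0^2 v'(\gamma_0)$. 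If $-c'(0)$ is not in the range, then by the established monotonicity $\gamma_0^2 v'(\gamma_0)\leq c'(0)$ everywhere (the alternative case is excluded because the argument vanishes at $\gamma_0=0$), so $H^*\equiv 0$ and the formula $\gamma_{\cD}=\gamma_{\max}$ trivially satisfies the conclusion.

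The main obstacle I anticipate is handling the boundary cases cleanly — in particular, verifying that the one-sided derivatives $v'(0+)$ and $v'(\gamma_{\max}-)$, together with the factor $\gamma_0^2$, interact correctly to guarantee that the threshold defined by the $(\check{v}')^{-1}$ formula falls within $[0,\gamma_{\max}]$ whenever the alternative branch is not triggered, and to rule out the degenerate scenario where $\gamma_0^2 v'(\gamma_0)$ exceeds $c'(0)$ for all $\gamma_0$, which is precluded by continuity at $\gamma_0=0$.
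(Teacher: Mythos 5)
Your proposal is correct and follows essentially the same route as the paper's proof: both rest on the identity $\gamma_0^2 v'(\gamma_0) = -\check{v}'(1/\gamma_0)$, the strict monotonicity of $\check{v}'$ coming from strict convexity of $\check{v}$, and the dichotomy for $\fh$ established in Proposition~\ref{prop:convex-conjugate}. You spell out the edge cases (the bound $\gamma_0^2 v'(\gamma_0)\le M_0$ and the vanishing of the argument at $\gamma_0=0$) in slightly more detail than the paper, but the core argument is the same.
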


\begin{proof}
From the relation $v(\gamma) = \check{v}(1/\gamma)$ we infer that $\gamma^2 v'(\gamma) = -\check{v}'(1/\gamma)$.
With this, it follows that $\gamma_{\cD}$ is the unique real number in
$[0,\gamma_{\max}]$ satisfying
\[
  \gamma_{\cD}^2 v'(\gamma_{\cD}) = -\check{v}'(1/\gamma_{\cD}) = c'(0).
\]
Moreover, since $\check{v}'$ is strictly increasing, we conclude that also
$\gamma\mapsto \gamma^2 v'(\gamma)$ is strictly increasing.
Finally, it was shown in Proposition~\ref{prop:convex-conjugate} that the mapping
$\hat{h}$ in the definition of $H^*$ is zero on $[0,c'(0)]$ and strictly increasing on $[c'(0),M_0]$.
Since $H^*(\gamma) = \hat{h}(\gamma^2v'(\gamma))$ the result follows.
\end{proof}

\begin{remark}
\label{rmk:power_threshhold}
    Note that $\gamma_{\cD} = 0$ in Lemma \ref{lem:feedback-map} for a power cost
    $c(x) = \zeta x^{1+\epsilon}$ with $\epsilon >0$.
    Indeed, since $c'(0) = 0$ and $\check{v}$ is strictly convex,
    we see that $-\check{v}'<0 = c'(0)$ on $(\check{\gamma}_{\min},\infty)$.
    Moreover, $\gamma^2 v'(\gamma) = 0$ if $\gamma = 0$,
    so we conclude that $\gamma_{\cD} = 0$ in that case.
\end{remark}

\begin{lemma}%[Existence and Uniqueness of Equilibrium Value]
\label{lem:equilibrium-functional}
There exists a unique number $\equi\in[0,\gamma_{\infty}^0]$ with $f(\equi,H^{*}(\equi))=0$,
where $\gamma_\infty^0$ is defined in \eqref{eq:gamma-infty}.
Furthermore, $f(\gamma,H^{*}(\gamma))<0$ for $\gamma\in(\equi,\gamma_{\max}]$
and $f(\gamma,H^{*}(\gamma))>0$ for $\gamma\in[0,\equi)$.
\end{lemma}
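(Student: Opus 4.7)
My plan is to study the scalar function $g:[0,\gamma_{\max}]\to\R$ defined by $g(\gamma):=f(\gamma,H^{*}(\gamma))$, and show that it is continuous and strictly decreasing with $g(0)\geq 0$ and $g(\gamma_{\infty}^{0})\leq 0$. The statement then follows from the intermediate value theorem together with strict monotonicity.

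The first step is continuity of $g$. Since $f$ is a polynomial, it suffices to verify continuity of $H^{*}(\gamma)=\fh(\gamma^{2}v'(\gamma))$ on $[0,\gamma_{\max}]$. The map $\fh$ is continuous on $[0,M_{0}]$ by Proposition~\ref{prop:convex-conjugate} (the two branches agree at $c'(0)$ since $(c')^{-1}(c'(0))=0$), and $\gamma\mapsto \gamma^{2}v'(\gamma)$ is continuous on $(0,\gamma_{\max})$ by Corollary~\ref{cor:regularity-original-value-function}, with one-sided limits at the endpoints provided by the existence of $v'(0+)$ and $v'(\gamma_{\max}-)$ from the concavity in Theorem~\ref{thm:regularity-value-function}. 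Continuity at $0$ uses in addition the factor $\gamma^{2}$ which vanishes as $\gamma\downarrow 0$.

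The second step is strict monotonicity of $g$. Writing $g(\gamma)=-(\bar{\sigma}_{1}^{2}+H^{*}(\gamma))\gamma^{2}-2\lambda\gamma+\sigma_{2}^{2}$, I use that $H^{*}$ is non-negative and non-decreasing on $[0,\gamma_{\max}]$ by Lemma~\ref{lem:feedback-map}. For $\gamma_{1}<\gamma_{2}$, the factor $\bar{\sigma}_{1}^{2}+H^{*}(\gamma_{1})$ is strictly positive and non-decreasing, while $\gamma_{1}^{2}<\gamma_{2}^{2}$; together with $-2\lambda\gamma$ being non-increasing, this shows $g(\gamma_{1})>g(\gamma_{2})$, regardless of whether $\lambda=0$.

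The third step evaluates $g$ at the endpoints relevant to the claim. Directly, $g(0)=\sigma_{2}^{2}\geq 0$, and since $H^{*}(\gamma_{\infty}^{0})\geq 0$ we have $g(\gamma_{\infty}^{0})\leq f(\gamma_{\infty}^{0},0)=0$ by the characterization of $\gamma_{\infty}^{0}$ in Lemma~\ref{lem:gamma-bounded}. If $\sigma_{2}>0$ the intermediate value theorem yields $\equi\in[0,\gamma_{\infty}^{0}]$ with $g(\equi)=0$, and strict monotonicity gives both uniqueness in $[0,\gamma_{\max}]$ and the sign claims $g>0$ on $[0,\equi)$, $g<0$ on $(\equi,\gamma_{\max}]$. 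The degenerate case $\sigma_{2}=0$ forces $\gamma_{\infty}^{0}=0$ by \eqref{eq:gamma-infty}, so $\equi=0$ with $g(0)=0$, and $g(\gamma)=-(\bar{\sigma}_{1}^{2}+H^{*}(\gamma))\gamma^{2}-2\lambda\gamma<0$ for $\gamma>0$ since $\bar{\sigma}_{1}^{2}>0$. No step presents a real obstacle; the only mild subtlety is bookkeeping the continuity of $H^{*}$ at the endpoints, where one-sided derivatives of $v$ must be used.
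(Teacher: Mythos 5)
Your proof is correct and follows essentially the same approach as the paper's: using Lemma~\ref{lem:feedback-map} to establish that $\gamma\mapsto f(\gamma,H^{*}(\gamma))$ is strictly decreasing, evaluating it at $0$ and $\gamma_{\infty}^{0}$, and applying the intermediate value theorem. You supply more detail on the continuity of $H^{*}$ and treat the degenerate case $\sigma_{2}=0$ separately, but these are elaborations of, not departures from, the paper's argument.
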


\begin{proof}
	As the optimal feedback map $H^{*}$ is nonnegative and increasing in
    $[0,\gamma_{\max}]$	by Lemma \ref{lem:feedback-map},
    we deduce from \eqref{eq:def-f} that the state coefficient function along the optimal control
	$\gamma\mapsto f(\gamma,H^{*}(\gamma))$ is strictly decreasing and hence has at most one root.
    As $f(0,H^{*}(0))\geq0$ and $f(\gamma_\infty,H^{*}(\gamma_\infty)) \leq f(\gamma_\infty,0) = 0$,
    continuity and the intermediate value theorem imply existence of a root in $[0,\gamma_\infty]$.
\end{proof}
Having established these properties of the optimal feedback function $H^*$ and the coefficient function
$f(\argdot,H^*(\argdot))$ of the optimal state,
we now study the implications for the optimal control and the optimal conditional variance.

\begin{proposition}
\label{prop:properties-equilibrium-state-control}
Let $\gamma_0\in[0,\gamma_{\max}]$ and denote by $h^*$ and $\gamma^*$ the associated optimal control
and optimal conditional variance, respectively.
Moreover, recall the constant $\gamma_{\cD}$ defined in Lemma~\ref{lem:feedback-map}.
\begin{enumerate}[label=\roman*),ref=\roman*)]
\item\label{item:proposition-properties-equilibrium-state-control-1}
  It holds that $\lim_{t\to\infty}\gamma^{*}_t = \equi$ and $\lim_{t\to\infty} h^*_t = \hEqui$
  for some $\equi\in [0,\gamma_{\max}]$ and $\hEqui\in[0,h_{\max}]$ independent of $\gamma_0$.
\item\label{item:proposition-properties-equilibrium-state-control-2}
  It holds that $\equi\to 0$ as $\sigma_{2}\to 0$ and $\equi = 0$ if and only if $\sigma_2 = 0$.
\item\label{item:proposition-properties-equilibrium-state-control-3}
  Both $\gamma^{*}$ and $h^{*}$ are monotone.
  Moreover, if $c'(0)\in[0,\gamma_{\max}]$, it holds that $h^{*}_{t}=0$ if $\gamma^{*}_{t}\in[0,\gamma_{\cD}]$ and $h^{*}_{t}>0$ if $\gamma^{*}_{t}\in(\gamma_{\cD},\gamma_{\max}]$.
\end{enumerate}
\end{proposition}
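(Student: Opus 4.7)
The plan is to reduce everything to the autonomous scalar ODE
\[
  \dot\gamma^{*}_t = g(\gamma^{*}_t), \qquad g(\gamma) := f(\gamma,H^{*}(\gamma)),\qquad \gamma^{*}_0=\gamma_0,
\]
whose right-hand side has already been analyzed in Lemma \ref{lem:equilibrium-functional}. Recall from that lemma that $g$ is continuous and strictly decreasing on $[0,\gamma_{\max}]$, with a unique root $\equi\in[0,\gamma^{0}_{\infty}]$, $g>0$ on $[0,\equi)$, and $g<0$ on $(\equi,\gamma_{\max}]$.

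For part \ref{item:proposition-properties-equilibrium-state-control-1}, I would first argue that $\gamma^{*}$ is monotone: by the strict monotonicity of $g$, a crossing of the level $\equi$ would violate uniqueness for the ODE, so the sign of $\gamma^{*}_t-\equi$ is constant, and hence also the sign of $g(\gamma^{*}_t)=\dot\gamma^{*}_t$. Together with the uniform bound $\gamma^{*}_t\in[0,\gamma_{\max}]$ from Lemma \ref{lem:gamma-bounded}, monotone convergence gives $\gamma^{*}_t\to\gamma_{\infty}\in[0,\gamma_{\max}]$. Then $\dot\gamma^{*}_t=g(\gamma^{*}_t)\to g(\gamma_{\infty})$, and a standard argument (a nonzero limiting derivative would contradict boundedness of $\gamma^{*}$) forces $g(\gamma_{\infty})=0$, so $\gamma_{\infty}=\equi$ by uniqueness of the root. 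Continuity of $H^{*}$ (established after Lemma \ref{lem:feedback-map} via Proposition \ref{prop:convex-conjugate} and the continuous differentiability of $v$) then yields $h^{*}_t=H^{*}(\gamma^{*}_t)\to H^{*}(\equi)=:\hEqui$. Crucially, both limits $\equi$ and $\hEqui$ depend only on $g$ and $H^{*}$, not on $\gamma_0$.

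For part \ref{item:proposition-properties-equilibrium-state-control-2}, I would read off from the definition \eqref{eq:def-f} that $f(0,h)=\sigma_{2}^{2}$ for every $h\geq0$. Hence if $\sigma_{2}>0$ then $g(0)=\sigma_{2}^{2}>0$, so by uniqueness of the root and Lemma \ref{lem:equilibrium-functional} we have $\equi>0$; if instead $\sigma_{2}=0$ then $g(0)=0$, and uniqueness forces $\equi=0$. The limit $\equi\to0$ as $\sigma_{2}\to0$ then follows from the bound $0\le\equi\le\gamma^{0}_{\infty}=-\lambda\sigma_{1}^{2}+\sqrt{\lambda^{2}\sigma_{1}^{4}+\sigma_{1}^{2}\sigma_{2}^{2}}$ of Lemma \ref{lem:equilibrium-functional}, whose right-hand side tends to $0$ as $\sigma_{2}\to0$.

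For part \ref{item:proposition-properties-equilibrium-state-control-3}, monotonicity of $\gamma^{*}$ is exactly what was proved in part \ref{item:proposition-properties-equilibrium-state-control-1}. Monotonicity of $h^{*}$ is then immediate because $h^{*}_t=H^{*}(\gamma^{*}_t)$ and $H^{*}$ is non-decreasing on $[0,\gamma_{\max}]$ by Lemma \ref{lem:feedback-map}. The final dichotomy is a direct translation of Lemma \ref{lem:feedback-map}: if $c'(0)\in[0,\gamma_{\max}]$ then $\gamma_{\cD}\in[0,\gamma_{\max}]$ is the threshold at which $H^{*}$ leaves the value $0$, so $h^{*}_t=H^{*}(\gamma^{*}_t)=0$ iff $\gamma^{*}_t\in[0,\gamma_{\cD}]$ and $h^{*}_t>0$ otherwise.

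I do not expect any serious obstacle; the whole argument is a scalar-ODE phase-line analysis exploiting the strict monotonicity of $g$ from Lemma \ref{lem:equilibrium-functional}, plus the structural statement about $H^{*}$ from Lemma \ref{lem:feedback-map}. The only mildly delicate point is justifying that $\dot\gamma^{*}_t$ must vanish in the limit, which is why I would explicitly invoke boundedness of $\gamma^{*}$ together with continuity of $g$ to rule out $g(\gamma_{\infty})\neq0$.
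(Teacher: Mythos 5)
Your proposal is correct, and it is worth noting two ways it deviates from the paper's own argument. For part~\ref{item:proposition-properties-equilibrium-state-control-1}, the paper invokes a Lyapunov function $\ell(\gamma) = (\gamma - \equi)^2$ together with the Lyapunov theorem to get convergence, whereas you run a self-contained phase-line argument: uniqueness of solutions prevents crossing the equilibrium, monotone boundedness gives a limit, and the limit must be a zero of $g$ because otherwise the trajectory would be unbounded. Both are standard and correct, but your route is more elementary and does not require citing the stability result from \cite{feichtinger2011optimale}; on the other hand, it spends a few more lines re-deriving what the Lyapunov theorem delivers in one blow. The two arguments for $\gamma^*$ being monotone are essentially the same (constant sign of $f(\argdot,H^*(\argdot))$, justified via the phase line), and for $h^*$ and the $\gamma_{\cD}$-dichotomy both you and the paper translate Lemma~\ref{lem:feedback-map} directly.

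For part~\ref{item:proposition-properties-equilibrium-state-control-2} your argument is actually slightly more complete than the paper's. The paper deduces all three statements from $0 \leq \equi \leq \gamma^0_\infty$ and the fact that they hold for $\gamma^0_\infty$; the squeezing argument gives $\equi \to 0$ as $\sigma_2 \to 0$ and the implication $\sigma_2 = 0 \Rightarrow \equi = 0$, but it does \emph{not} by itself give the converse $\equi = 0 \Rightarrow \sigma_2 = 0$, since the bound $\equi \leq \gamma^0_\infty$ is silent about the case $\gamma^0_\infty > 0$. Your observation that $g(0) = f(0,H^*(0)) = \sigma_2^2$, combined with the strict monotonicity of $g$ and uniqueness of the root from Lemma~\ref{lem:equilibrium-functional}, cleanly supplies exactly this missing direction. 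So the proposal is not merely an alternative proof; it also closes a small gap in the published argument.
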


\begin{proof}
The function $\ell(\gamma):=(\gamma-\equi)^{2}$ satisfies $\ell(\gamma)>0$ for $\gamma\neq\equi$
and $\ell'(\gamma)f(\gamma,H^*(\gamma)) \leq 0$ for all $\gamma\in[0,\gamma_{\max}]$.
Hence $\ell$ is a Lyapunov function, and the convergence of $\gamma^*$ to $\equi$
follows from Lemma~\ref{lem:equilibrium-functional} and the Lyapunov theorem;
see e.g.\ \cite[Satz 5.6]{feichtinger2011optimale}.
Since $h^*$ is a function of $\gamma^*$, this also implies asymptotic stability of $h^*$,
i.e.\ its convergence to some equilibrium value $\hEqui\in[0,h_{\max}]$
independently of the initial condition $\gamma_0$.

Next, we recall that $\gamma^*\leq\gamma^0$, where $\gamma^0$ is the uncontrolled conditional variance
with corresponding equilibrium $\gamma_\infty^0$ defined in \eqref{eq:gamma-infty}.
Thus $\equi\leq\gamma_\infty^0$ and the statements $\equi\to 0$ as $\sigma_{2}\to 0$ and $\equi = 0$
if and only if $\sigma_2 = 0$ follow from the fact that the same is already true
if we replace $\equi$ by $\gamma_\infty^0$.

Finally, monotonicity of the state $\gamma^*$ is immediate from the fact that
$f(\gamma,H^{*}(\gamma))>0$ for $\gamma<\equi$
and $f(\gamma,H^{*}(\gamma))<0$ for $\gamma>\equi$.
Monotonicity of the control $h^*$ is immediate from monotonicity of the corresponding
controlled state and monotonicity of the feedback map $H^*$ as shown in Lemma~\ref{lem:feedback-map}.
The claim for the control being zero in $[0,\gamma_{\cD}]$ and strictly positive in the complement
is another direct consequence of Lemma~\ref{lem:feedback-map}.
\end{proof}

Proposition~\ref{prop:properties-equilibrium-state-control} shows that,
independently of the initial condition, the optimally controlled state converges
to the unique equilibrium $\equi$.
The same is true for the optimal control with the equilibrium value $\hEqui$.
Moreover, the uncertainty of the system, as measured by $\gamma$, converges to zero
as $t\to\infty$ if and only if there is no noise present in the hidden state $\mu$,
that is, if $\sigma_2=0$.
Next, regarding the convergence $\equi\to 0$ as $\sigma_2\to 0$, we shall see
in Section~\ref{subsec:sensitivity} that the convergence is even monotone.
Finally, Proposition~\ref{prop:properties-equilibrium-state-control} furthermore
shows that if $c'(0)\in[0,\gamma_{\max}]$, there is a critical level $\gamma_{\cD}$
such that it is optimal not to conduct any additional experiments ($h^*=0$) if
the conditional variance $\gamma^*$ is below this threshold.

Our next aim is to study the properties of the optimal control and of the value
function near the equilibrium.

\begin{proposition}\label{prop:value-equilibrium}
At the equilibrium $\equi$, the value function is given by
\begin{equation}\label{eq:value-function-at-equilibrium}
  \vEqui :=v(\equi)
  = \frac{1}{\delta}\bigl(\bar{a}\equi+ c(\hEqui)\bigr).
\end{equation}
Furthermore, if $\equi\in[0,\gamma_{\cD}]$, we have $\hEqui=0$ and,
if we even have $\equi\in(0,\gamma_{\cD})$,
\begin{equation}\label{eq:value-function-derivative-at-equilibrium-in-cD}
  v'(\equi)
  = \frac{\bar{a}}{2\bar{\sigma}_{1}^{2}\equi+2\lambda+\delta}.
\end{equation}
Conversely, if $\equi\in(\gamma_{\cD},\gamma_{\max}]$, we have
\begin{align}
  v'(\equi) &= \frac{1}{\equi^{2}} c'\left(
    -\frac{1}{\equi^{2}} (2\lambda\equi - \sigma_{2}^{2})
    -\bar{\sigma}_{1}^{2}
  \right),\label{eq:value-function-derivative-at-equilibrium-in-cDc}\\
  \hEqui &= -\frac{1}{\equi^{2}} (2\lambda\equi - \sigma_{2}^{2})
    - \bar{\sigma}_{1}^{2}.\label{eq:control-at-equilibrium-in-cDc}
\end{align}
\end{proposition}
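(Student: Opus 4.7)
The plan rests on two observations: (i) by the definition $\hEqui = H^*(\equi)$, Lemma~\ref{lem:equilibrium-functional} guarantees that $(\equi, \hEqui)$ is a steady state of the optimally controlled dynamics, so $f(\equi, \hEqui) = 0$; and (ii) the HJB equation \eqref{eq:HJB-reduced-problem} together with the characterization of the feedback map \eqref{eq:feedback-map} encodes all the first-order information needed at this steady state.

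First I would establish \eqref{eq:value-function-at-equilibrium}. Starting the reduced problem at $\gamma_0 = \equi$ with the constant control $h \equiv \hEqui$, the state equation \eqref{eq:SEE} has the constant solution $\gamma_t \equiv \equi$ since $f(\equi,\hEqui)=0$, so by Theorem~\ref{th:verification-deterministic} the resulting feedback control is optimal and
\[
  v(\equi) = \int_0^\infty e^{-\delta t}\bigl(\bar{a}\equi + c(\hEqui)\bigr)\rd t = \frac{1}{\delta}\bigl(\bar{a}\equi + c(\hEqui)\bigr).
\]
Equivalently, evaluating the HJB equation \eqref{eq:HJB-reduced-problem} at $\gamma=\equi$ and using $f(\equi,\hEqui)=0$ yields the same identity. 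The claim $\hEqui = 0$ for $\equi \in [0,\gamma_{\cD}]$ is immediate from Lemma~\ref{lem:feedback-map}.

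For $\equi \in (\gamma_{\cD},\gamma_{\max}]$, Lemma~\ref{lem:feedback-map} gives $\hEqui>0$, so the inner infimum in the HJB equation is attained at an interior point and the interior-branch formula from Proposition~\ref{prop:convex-conjugate} yields $c'(H^*(\gamma)) = \gamma^2 v'(\gamma)$ near $\equi$; in particular $v'(\equi) = c'(\hEqui)/\equi^2$. Solving the steady-state equation $f(\equi,\hEqui)=-(\bar\sigma_1^2+\hEqui)\equi^2 - 2\lambda\equi + \sigma_2^2 = 0$ explicitly for $\hEqui$ gives \eqref{eq:control-at-equilibrium-in-cDc}, and substituting into $v'(\equi)=c'(\hEqui)/\equi^2$ produces \eqref{eq:value-function-derivative-at-equilibrium-in-cDc}.

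The remaining case $\equi \in (0,\gamma_{\cD})$ is the main subtlety, since the first-order condition is inactive and $v'(\equi)$ cannot be read off the HJB algebraically. I would exploit that $\hEqui = 0$ combined with $f(\equi,0)=0$ forces $\equi = \gamma^0_\infty$, and that $H^* \equiv 0$ on all of $[0,\gamma_{\cD}]$ by Lemma~\ref{lem:feedback-map}. Consequently, for every $\gamma_0$ in an open neighborhood $U \subset (0,\gamma_{\cD})$ of $\equi$, the uncontrolled trajectory $\gamma^0$ stays in $U$ by Lemma~\ref{lem:gamma-bounded}, so $h\equiv 0$ is optimal on $U$ and $v|_U = \cJ_{det}(0;\argdot)|_U$. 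Applying the sensitivity formula \eqref{eq:J-first-derivative} gives $v'(\equi) = \bar{a}\int_0^\infty e^{-\delta t}\beta_t\,\rd t$, where by \eqref{eq:beta-ODE} with $h\equiv 0$ and $\gamma^0_t \equiv \equi$ the sensitivity satisfies the linear ODE $\dot\beta_t = -(2\bar\sigma_1^2 \equi + 2\lambda)\beta_t$ with $\beta_0 = 1$. Integrating $\beta_t = e^{-(2\bar\sigma_1^2 \equi + 2\lambda)t}$ against $e^{-\delta t}$ produces \eqref{eq:value-function-derivative-at-equilibrium-in-cD}.
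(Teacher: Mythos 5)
Your argument is correct and, for two of the three parts, coincides with what the paper does: the identity $v(\equi)=\tfrac{1}{\delta}(\bar a\equi + c(\hEqui))$ is obtained in both cases by observing that the equilibrium trajectory is constant, and in the regime $\equi\in(\gamma_{\cD},\gamma_{\max}]$ both you and the paper combine the interior first-order condition $c'(\hEqui)=\equi^{2}v'(\equi)$ (from $H^*=\hat h$ and Proposition~\ref{prop:convex-conjugate}) with the steady-state relation $f(\equi,\hEqui)=0$. Where you genuinely diverge is the subcase $\equi\in(0,\gamma_{\cD})$. Both arguments begin by observing that $v=\cJ_{det}(0;\argdot)$ on $[0,\gamma_{\cD}]$ (because $H^*\equiv 0$ there and, by the monotone convergence from Lemma~\ref{lem:gamma-bounded} and Proposition~\ref{prop:properties-equilibrium-state-control}, the feedback trajectory never leaves that interval). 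But the paper then invokes the extra smoothness of $\cJ_{det}(0;\argdot)$ to differentiate the reduced HJB equation in $\gamma$ and exploits the degeneracy $f(\equi,0)=0$ to annihilate the $v''$ term, whereas you instead apply the sensitivity identity \eqref{eq:J-first-derivative} directly to the frozen-in-time problem: with $h\equiv 0$ and $\gamma^0_t\equiv\equi$, \eqref{eq:beta-ODE} gives $\beta_t=e^{-(2\bar\sigma_1^{2}\equi+2\lambda)t}$ and the integral evaluates to the claimed expression. Your route is slightly more economical — it needs only $C^1$ regularity of $v$ and never touches $v''$ — and it makes the constant appear as a discounted time-integral, which is arguably more transparent. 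One small imprecision worth fixing: you attribute the localization of the uncontrolled trajectory in $U$ solely to Lemma~\ref{lem:gamma-bounded}, which only provides the one-sided bound \eqref{eq:gamma-max}; you should also invoke the monotone convergence of $\gamma^{0}$ to $\gamma^{0}_{\infty}=\equi$ (or Proposition~\ref{prop:properties-equilibrium-state-control}) to keep the trajectory bounded away from $0$ and below $\gamma_{\cD}$.
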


\begin{proof}
At the equilibrium, the optimal control $h^*=\hEqui$ and the corresponding optimal
state $\gamma^* = \equi$ are constant, from which we immediately find
\[
    \vEqui=v(\equi) = \cJ_{det}(\hEqui;\equi)
    = \frac{1}{\delta}\bigl(\bar{a}\equi+ c(\hEqui)\bigr).
\]
Now assume that $\equi\in(0,\gamma_{\cD})$, so that in particular $\gamma_{\cD}>0$.
We first observe that Lemma~\ref{lem:feedback-map} and
Proposition~\ref{prop:properties-equilibrium-state-control} imply that,
for any $\gamma_0\in[0,\gamma_{\cD}]$, the optimal control is $h^* = 0 = \hEqui$
and hence $v(\gamma_0) = \cJ_{det}(0;\gamma_0)$ for all $\gamma_0\in[0,\gamma_{\cD}]$.
Now $\gamma_0 \mapsto \cJ_{det}(0;\gamma_0)$ is twice continuously differentiable
(as seen in the proof of Theorem~\ref{thm:regularity-value-function}), and hence $v$
is twice continuously differentiable on $(0,\gamma_{\cD})$.
Next, with $h^*=0$, the HJB equation on $(0,\gamma_{\cD})$ simplifies to
\[
  0 = \bigl(-\bar{\sigma}_1^2\gamma_0^2 - 2\lambda\gamma_0 + \sigma_2^2\bigr)v'(\gamma_0)
    + \bar{a}\gamma_0 + c(0) - \delta v(\gamma_0),
  \quad \gamma_0\in(0,\gamma_{\cD}).
\]
Differentiating with respect to $\gamma_0$ and setting $\gamma_0 = \equi$, it
follows that
\[
  0 = \bigl(-\bar{\sigma}_1^2\equi^2 - 2\lambda\equi + \sigma_2^2\bigr)v''(\equi)
    - \bigl(2\bar{\sigma}_1^2\equi + 2\lambda + \delta\bigr)v'(\equi)
    + \bar{a}.
\]
Now the coefficient preceding $v''(\equi)$ is equal to $f(\equi,0) = 0$, so
that solving for $v'(\equi)$ yields
\eqref{eq:value-function-derivative-at-equilibrium-in-cD} as claimed.
For the case $\equi\in[\gamma_{\cD},\gamma_{\max}]$, the identity
\eqref{eq:control-at-equilibrium-in-cDc} follows directly from
\[
  0 = f(\equi,\hEqui) = -(\bar{\sigma}_1^2+\hEqui)\equi^2 - 2\lambda\equi + \sigma_2^2
\]
by solving for $\hEqui$.
Finally, observe that
\[
  \hEqui = H^*(\equi) = \hat{h}\bigl(\equi^2v'(\equi)\bigr)
  = (c')^{-1}\bigl(\equi^2v'(\equi)\bigr),
\] 
where the last identity is a consequence of $\equi\in[\gamma_{\cD},\gamma_{\max}]$.
Solving for $v'(\equi)$ and using \eqref{eq:control-at-equilibrium-in-cDc}
yields \eqref{eq:value-function-derivative-at-equilibrium-in-cDc} as claimed.
\end{proof}

We conclude the discussion in the reduced problem by studying the asymptotic
behaviour as $\gamma\to 0$ and as $\gamma\to\infty$.
For this, let us first observe that the solution
of the reduced problem does not depend on the particular choice of $\gamma_{\max}$,
provided that it is chosen sufficiently large.
More precisely, if
$\gamma_{\max} < \hat{\gamma}_{\max}$ are two different choices
for the upper bound on the conditional variance and if $v$ and $\hat{v}$ denote the
corresponding value functions, then $v = \hat{v}$ on $[0,\gamma_{\max}]$ and the
optimal controls for any initial condition $\gamma_0\in[0,\gamma_{\max}]$
coincide.
In light of this, with a slight abuse of notation, we can subsequently
assume that the value function $v$ of the reduced problem is defined on all of $[0,\infty)$.
Note that $v$ is still Lipschitz continuous on $[0,\infty)$ with constant $L_v = \bar{a}/\delta$.

\begin{theorem}
\label{eq:quadratic_asymptotic}
The feedback map $H^*$ satisfies
\[
  0 \leq H^*(\gamma) \leq \hat{h}\bigl(\gamma^2 L_v\bigr)
  \leq (c')^{-1}\bigl(\gamma^2 L_v\bigr),
  \quad\gamma\in[0,\infty).
\]
If, moreover, $c$ is a quadratic cost function of the form $c(x) = \zeta x^2$
for some $\zeta>0$, then
\[
  \lim_{\gamma\to\infty} \frac{H^*(\gamma)}{\gamma^{1/2}}
  = \frac{\sqrt{\bar{a}}}{\sqrt{\zeta}}.
\]
\end{theorem}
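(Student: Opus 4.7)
My plan is to address the two parts separately. The first chain of inequalities is essentially a book-keeping argument. By Theorem~\ref{thm:regularity-value-function} and Corollary~\ref{cor:regularity-original-value-function}, $v$ is strictly increasing, concave and $L_v$-Lipschitz, and its derivative lies in $[0,L_v]$. Combined with the monotonicity of $\hat h$ from Proposition~\ref{prop:convex-conjugate}, this yields $H^{*}(\gamma)=\hat h(\gamma^{2}v'(\gamma))\le \hat h(\gamma^{2} L_{v})$, and the bound $\hat h(y)\le (c')^{-1}(y)$ is immediate from the piecewise formula for $\hat h$ (interpreting $(c')^{-1}(y):=0$ for $y<c'(0)$).

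For the quadratic case $c(x)=\zeta x^{2}$, the identity $c'(0)=0$ simplifies the feedback map to $H^{*}(\gamma)=\gamma^{2}v'(\gamma)/(2\zeta)$ on all of $[0,\infty)$. Plugging this back into the reduced HJB equation \eqref{eq:HJB-reduced-problem} and simplifying yields the nonlinear first-order identity
\[
  \delta v(\gamma) + \frac{\gamma^{4}\,v'(\gamma)^{2}}{4\zeta} + \bar\sigma_{1}^{2}\gamma^{2} v'(\gamma) + 2\lambda\gamma\, v'(\gamma) - \sigma_{2}^{2} v'(\gamma) = \bar a\,\gamma.
\]
My strategy is to read off the leading-order asymptotics of $v'$ directly from this ODE. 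Setting $X(\gamma) := \gamma v'(\gamma)$ and dividing by $\gamma$ gives
\[
  \frac{\gamma X(\gamma)^{2}}{4\zeta} + \bar\sigma_{1}^{2} X(\gamma) + 2\lambda v'(\gamma) + \frac{\delta v(\gamma)}{\gamma} - \frac{\sigma_{2}^{2} v'(\gamma)}{\gamma} = \bar a.
\]

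Since $v, v'\ge 0$, the four terms $\gamma X^{2}/(4\zeta), \bar\sigma_{1}^{2}X, 2\lambda v', \delta v/\gamma$ are all non-negative, while $\sigma_{2}^{2}v'/\gamma$ is $O(1/\gamma)$ by the Lipschitz bound. Consequently $\gamma X(\gamma)^{2}$ is uniformly bounded, so $X(\gamma)=O(\gamma^{-1/2})$ and $v'(\gamma)=O(\gamma^{-3/2})\to 0$; a short concavity argument (using $v(\gamma)\le v(\gamma_{0})+v'(\gamma_{0})(\gamma-\gamma_{0})$ for $\gamma\ge\gamma_{0}$) then gives $v(\gamma)/\gamma\to 0$. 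Passing to the limit in the displayed equation, every term other than $\gamma X^{2}/(4\zeta)$ vanishes, yielding $\gamma X(\gamma)^{2}\to 4\zeta\bar a$. Because $H^{*}(\gamma)/\sqrt\gamma = \gamma X(\gamma)/(2\zeta\sqrt\gamma) = \sqrt{\gamma X(\gamma)^{2}}/(2\zeta)$, I conclude $H^{*}(\gamma)/\sqrt\gamma \to \sqrt{4\zeta\bar a}/(2\zeta) = \sqrt{\bar a}/\sqrt\zeta$.

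The main obstacle is the passage from the nonlinear ODE to the precise leading-order decay of $v'$. The crucial leverage is the algebraic structure of the HJB at each fixed $\gamma$: the Lipschitz bound on $v$ together with the non-negativity of the individual terms automatically forces $\gamma v'(\gamma)^{2}$ to stay bounded, after which the quadratic-in-$v'$ term dominates and pins down the rate exactly. No comparison principles or further PDE-theoretic input beyond this sign-and-magnitude analysis are needed.
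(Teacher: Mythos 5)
Your proof is correct, and the overall architecture matches the paper's: both plug the optimizer $H^{*}(\gamma)=\gamma^{2}v'(\gamma)/(2\zeta)$ back into the reduced HJB equation, both extract the decay rate of $v'$ from the resulting identity, both need $v(\gamma)/\gamma\to 0$, and both finish by passing to the limit. The difference is in how the decay of $v'$ is extracted. The paper treats the HJB identity as a quadratic in $\gamma^{2}v'(\gamma)$, inverts it with the quadratic formula (using $v'>0$ to select the root), reads off $\limsup_{\gamma\to\infty}\gamma^{3/2}v'(\gamma)\leq 2\sqrt{\zeta\bar a}$ from $v\geq 0$, and then integrates this bound to show $v$ is in fact bounded. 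You instead divide the HJB identity by $\gamma$ and observe that, up to the $O(1/\gamma)$ correction $\sigma_{2}^{2}v'(\gamma)/\gamma$, the left-hand side is a sum of non-negative terms equal to $\bar a$; boundedness of each term is immediate, giving $\gamma^{3}v'(\gamma)^{2}=O(1)$ without ever solving the quadratic or choosing a root. This is a genuinely more elementary extraction — it relies only on signs and magnitudes, not on the explicit algebraic inversion — and it buys a shorter and somewhat more robust argument. You then establish $v(\gamma)/\gamma\to 0$ by a concavity estimate combined with $v'(\gamma_{0})\to 0$ (weaker than the paper's boundedness of $v$, but sufficient). One small point worth making explicit if you write this up: the HJB identity used here is the one established in Theorem~\ref{th:verification-deterministic}, extended to all of $[0,\infty)$ by taking $\gamma_{\max}$ arbitrarily large, and the conclusion $\gamma^{3}v'(\gamma)^{2}\to 4\zeta\bar a$ from $\gamma^{3}v'(\gamma)^{2}=O(1)$ does require noting that each of the other non-negative terms actually tends to $0$, which your $O(\gamma^{-1/2})$ and $O(\gamma^{-3/2})$ estimates provide.
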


\begin{proof}
The first claim is immediate from the definition of the feedback map $H^*$,
the monotonicity of $\hat{h}$, the Lipschitz continuity of $v$, and the inequality
$\hat{h} \leq (c')^{-1}$.
Regarding the second claim, let us suppose that $c$ is quadratic
and recall that in this case $\gamma_{\cD} = 0$, meaning that
\[
  H^*(\gamma) = \hat{h}\bigl(\gamma^2 v'(\gamma)\bigr)
  = (c')^{-1}\bigl(\gamma^2 v'(\gamma)\bigr)
  = \frac{\gamma^2 v'(\gamma)}{2\zeta},\quad \gamma\in[0,\infty).
\]
Plugging this optimizer into the HJB equation \eqref{eq:HJB-reduced-problem} yields
\[
  0 = - \frac{1}{4\zeta}\gamma^4 v'(\gamma)^2
    + \Bigl(-\bar{\sigma}_1^2-\frac{2\lambda}{\gamma}+\frac{\sigma_2^2}{\gamma^2}\Bigr)\gamma^2v'(\gamma)
    + \bar{a}\gamma - \delta v(\gamma),\quad \gamma\in[0,\infty).
\]
Treating this equation as a quadratic equation in $\gamma^2v'(\gamma)$, the
quadratic formula shows that
\begin{equation}\label{eq:quadratic}
  \gamma^2v'(\gamma) = 2\zeta\Bigl(-\bar{\sigma}_1^2-\frac{2\lambda}{\gamma}+\frac{\sigma_2^2}{\gamma^2}\Bigr)
    + 2\sqrt{\zeta^2\Bigl(-\bar{\sigma}_1^2-\frac{2\lambda}{\gamma}+\frac{\sigma_2^2}{\gamma^2}\Bigr)^2 + \zeta(\bar{a}\gamma - \delta v(\gamma))},
\end{equation}
where we have used $v'(\gamma)>0$ to identify the correct root.
This along with $v\geq 0$ implies
\begin{equation*}
  \limsup_{\gamma\to \infty}\gamma^{3/2}v'(\gamma) \leq 2\sqrt{\zeta\bar{a}}.
\end{equation*}
In particular, for every $\eps>0$ there exists $\hat{\gamma}>0$ such that
\[
  v'(\gamma) \leq \bigl(\eps + 2\sqrt{\zeta\bar{a}}\bigr)\gamma^{-3/2},
  \quad \gamma\in[\hat{\gamma},\infty).
\]
But then we conclude that
\[
  v(\gamma)
  = v(\hat{\gamma}) + \int_{\hat{\gamma}}^\gamma v'(s)\rd s
  \leq v(\hat{\gamma}) + \bigl(\eps + 2\sqrt{\zeta\bar{a}}\bigr)\int_{\hat{\gamma}}^\gamma s^{-3/2}\rd s
  \leq v(\hat{\gamma}) + 2\bigl(\eps + 2\sqrt{\zeta\bar{a}}\bigr)\hat{\gamma}^{-1/2}.
\]
This shows that $v$ is in fact bounded, so that dividing both sides of \eqref{eq:quadratic}
by $\gamma^{1/2}$ we find
\[
  \lim_{\gamma\to\infty} \gamma^{3/2}v'(\gamma) = 2\sqrt{\zeta\bar{a}}.
\]
But then
\[
  \lim_{\gamma\to\infty}\frac{H^*(\gamma)}{\gamma^{1/2}}
  = \lim_{\gamma\to\infty}\frac{(c')^{-1}\bigl(\gamma^2v'(\gamma)\bigr)}{\gamma^{1/2}}
  = \lim_{\gamma\to\infty}\frac{\gamma^{3/2}v'(\gamma)}{2\zeta}
  = \frac{\sqrt{\bar{a}}}{\sqrt{\zeta}},
\]
which concludes the proof.
\end{proof}

\begin{figure}[ht]
\centering
\begin{subfigure}{.49\textwidth}
    \centering
    \includegraphics[width=.8\linewidth]{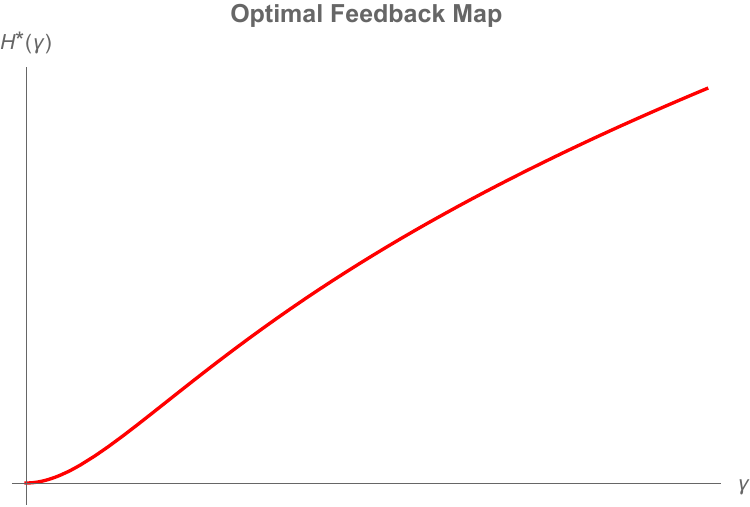}
    \caption{The optimal feedback map}
    \label{fig:feedback-map}
\end{subfigure}
\begin{subfigure}{.49\textwidth}
    \centering
    \includegraphics[width=.8\linewidth]{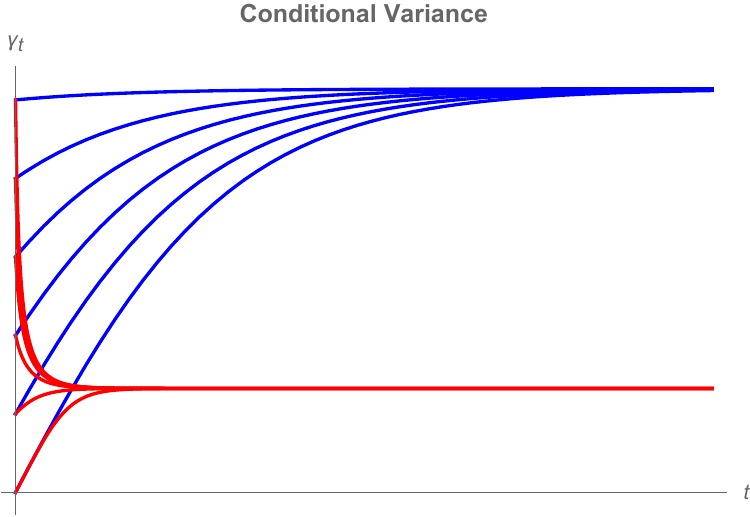}
    \caption{Optimal conditional (red) variance and uncontrolled conditional
    variance (blue) for different initial conditions}
    \label{fig:state}
\end{subfigure}
\par\bigskip
\begin{subfigure}{.49\textwidth}
    \centering
    \includegraphics[width=.85\linewidth]{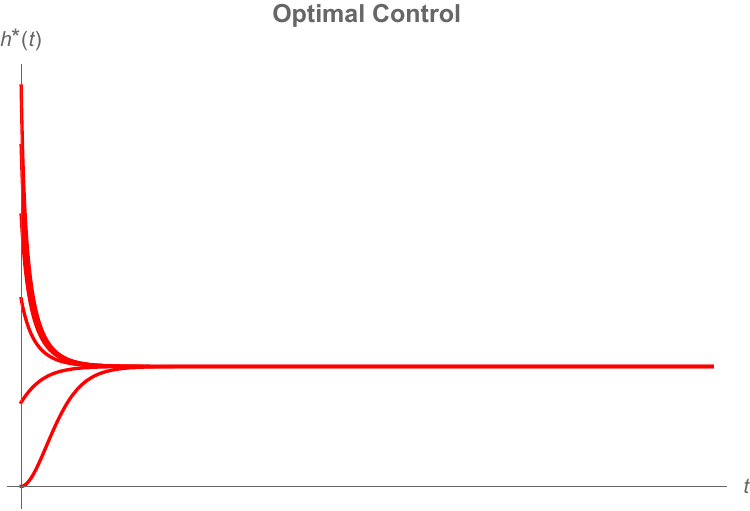}
    \caption{Realizations of the optimal control for different initial conditions}
    \label{fig:realizations-control}
\end{subfigure}
\begin{subfigure}{.49\textwidth}
    \centering
    \includegraphics[width=.85\linewidth]{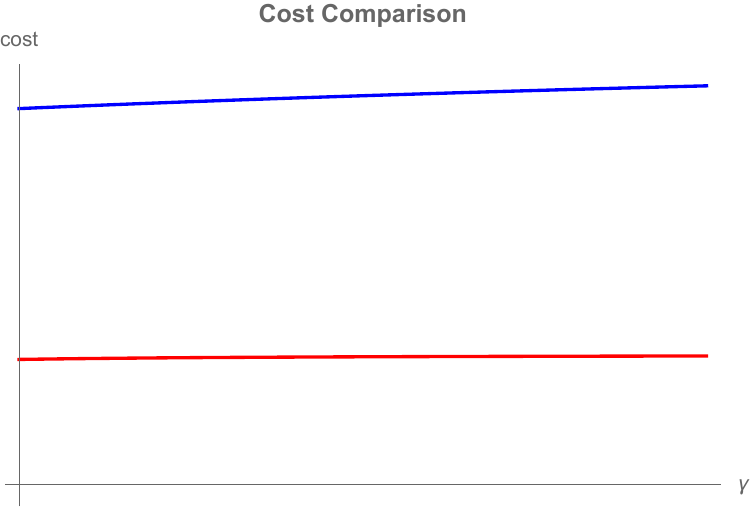}
    \caption{Comparison of the cost functional for optimal (red) and
	no-information-acquisition control (blue)}
    \label{fig:value-function}
\end{subfigure}
\caption{Illustrations for the reduced control problem.}
\label{fig:2x2-grid}
\end{figure}

We close this section with numerical illustrations in the case of a quadratic cost function.
Here, Figure~\ref{fig:feedback-map} depicts the optimal feedback map $H^*$,
whereas Figure~\ref{fig:state} shows the optimal conditional variance $\gamma^*$
(in red) and the uncontrolled conditional variance $\gamma^0$ (in blue) as functions
of time.
The latter illustration highlights in particular the convergence of $\gamma^*$ and $\gamma^0$
to the distinct equilibrium states $\equi$ and $\gamma_\infty^0$, respectively.
Next, Figure~\ref{fig:realizations-control} depicts the optimal control
$h^*$ as a function of time for different initial conditions $\gamma_0$, showing
that the convergence to the equilibrium state $\hEqui$ does in fact not depend on $\gamma_0$.
Finally, Figure~\ref{fig:value-function} compares the cost of the no-information-acquisition
control $h\equiv 0$ (in blue) and the cost of the optimal control $h^*$, that is,
the value function $v$, (in red).

\subsection{Impact of Information Acquisition}
\label{sec:properties-original}

In order to study the effect of information acquisition,
we proceed by comparing the original partial observation problem as given by
\eqref{eq:original-control-problem} with two benchmark problems in which the
\DM\ (i) can observe the process $\mu$ and (ii) does not observe $\mu$ and cannot
acquire additional information through experiments.

The observable case (i) is formally obtained by changing the set of
admissible controls from $\cA$ to $\cA^{pre}$.
As full observability implies that
there is no benefit in choosing a costly information acquisition rate $h$, we
may furthermore assume that $h\equiv 0$.
The value function of this problem may thus be written as
\[
  V^{full}(x,\mu_0) := \inf_{(u,0)\in\cA^{pre}}
    \E\Bigl[\int_0^\infty e^{-\delta t} \Bigl(\frac{1}{2}\kappa|X_t^u|^2
	  + \rho|u_t|^2 + c(0)\Bigr) \rd t\Bigr]
  \quad\text{subject to \eqref{eq:po-state}}
\]
for all $(x,\mu_0)\in\R^2$.
This problem is a standard linear quadratic stochastic control problem which can be solved explicitly.
Indeed, it is not difficult to show that
\[
  V^{full}(x,\mu_0) = a_{1}x^{2} + a_{2}\mu_0^{2} + a_{3}x\mu_0
    + b_{1}x + b_{2}\mu_0 + c^{full},
\]
where $a_1,a_2,a_3,b_1,b_2\in\R$ are the same coefficients as appearing in the value
function $V$; see \eqref{eq:ansatz-coefficients}.
Moreover, the constant $c^{full}$ is given explicitly by
\[
  c^{full} = \frac{1}{\delta}\Bigl(
    \sigma_{1}^{2} a_{1} + \sigma_{2}^{2} a_{2} + b_{2} \lambda \bar{\mu}
    - \frac{b_{1}^{2}}{2\rho} + c(0)
  \Bigr).
\]
Finally, the optimal feedback map for this problem coincides with
the feedback map $U^*$ in \eqref{eq:optimal-feedback-maps}.

The second benchmark problem (ii) with unobservable $\mu$ and without information
acquisition is obtained by restricting the set of controls $(u,h)\in\cA$ to $h\equiv 0$.
The value function of this problem is therefore given by
\[
  V^{no}(z) := \inf_{(u,0)\in\cA} \cJ(u,0;z)
  \quad\text{subject to \eqref{eq:state-aux}}
\]
for any $z=(x,m_0,\gamma_0)\in\R^2\times[0,\infty)$.
From the discussion in Section~\ref{sec:reduction}, it follows that
\[
  V^{no}(x,m,\gamma_0)
	= a_{1}x^{2} + a_{2}m^{2} + a_{3}xm
	+ b_{1}x + b_{2}m + c^{no}(\gamma_0),
\]
where $a_1,a_2,a_3,b_1,b_2$ are still as above and where $c^{no}$ solves
\eqref{eq:HJB-reduced-problem-unreduced} after setting $h\equiv 0$.
Again, the optimal feedback map coincides with $U^*$ given by
\eqref{eq:optimal-feedback-maps}.

It is intuitive that more information on the state comes with lower expected cost,
and in fact we see that
\[
  V^{full}(x,m) \leq V(x,m,\gamma_0) \leq V^{no}(x,m,\gamma_0),
  \quad (x,m,\gamma_0)\in\R^2\times[0,\infty).
\]
In fact, from the semi-explicit expressions we obtain the following relation for the value functions
\[
  V^{full}(x,m) - V(x,m,\gamma_0) = c^{full} - v(\gamma_0)
  \quad\text{and}\quad
  V(x,m,\gamma_0) - V^{no}(x,m,\gamma_0) = v(\gamma_0) - c^{no}(\gamma_0)
\]
for all $(x,m,\gamma_0)\in\R^2\times[0,\infty)$.
It should be highlighted that in the comparison of $V^{full}$ and $V$
there may be an additional difference
as, in practice, $V^{full}$ has to be evaluated at the true initial state $\mu_0$
of $\mu$, whereas $V$ is evaluated at the estimated initial value $m$ of $\mu$.
Conversely, we see that the difference $v-c^{no}$ is exactly the additional value
provided by the ability to acquire information.
This underlines once again our interpretation of $\gamma_0$ as a measure of uncertainty
and allows us to interpret $v$ as a measure of the {\it value of information}.

\begin{figure}[ht]
\centering
\includegraphics[width=0.5\linewidth]{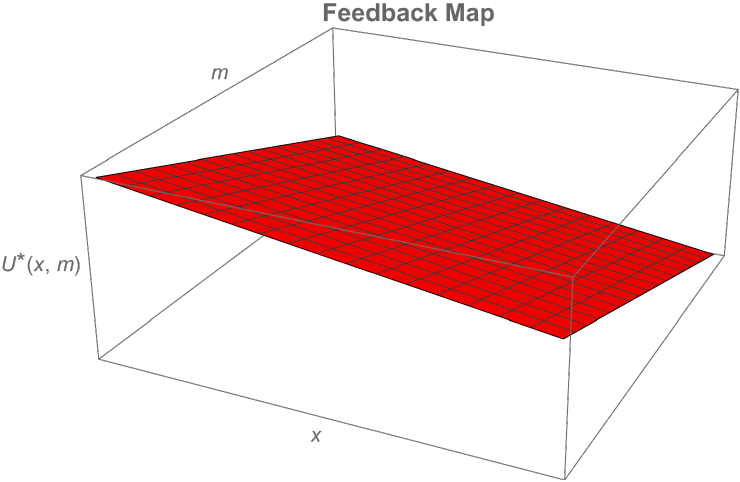}
\caption{Illustration of the optimal feedback map $U^*$.}
\label{fig:original-feedback-map}
\end{figure}

The fact that all three problems feature the same optimal feedback map $U^*$
as illustrated in Figure~\ref{fig:original-feedback-map}
shows that in all cases, the optimal state control satisfies the certainty equivalent principle.
Indeed, the acquisition of information through $h$ and the corresponding level
of uncertainty $\gamma^h$ do not directly affect the optimal state
control $u^*$, but only indirectly through the estimate $m^{u,h}$ of the hidden
state $\mu$ via the feedback map $U^*$.

\begin{figure}[ht]
\centering
\begin{subfigure}{.49\textwidth}
	\centering
	\includegraphics[width=0.8\linewidth]{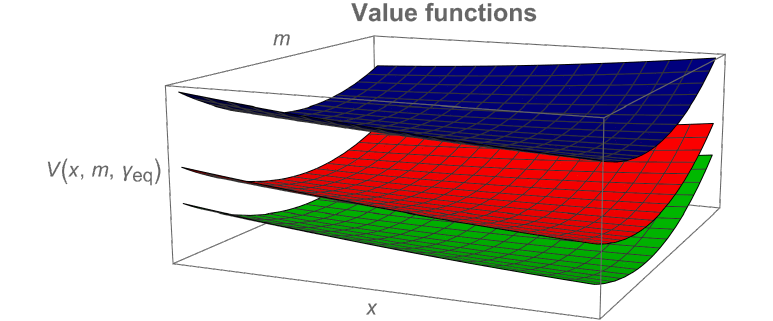}
	\caption{Comparison of value functions}
	\label{fig:original-problem-value-function}
\end{subfigure}
\begin{subfigure}{.49\textwidth}
	\centering
	\includegraphics[width=0.8\linewidth]{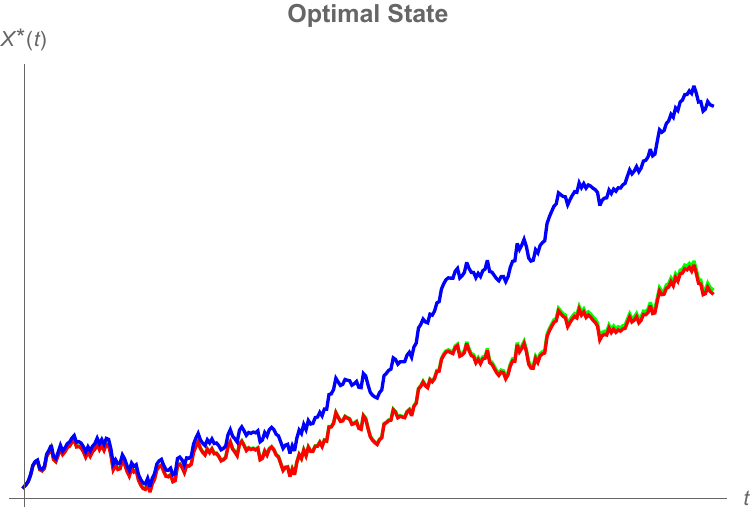}
	\caption{Comparison of optimally controlled states}
	\label{fig:original-state-comparison-1}
\end{subfigure}
\par\bigskip
\begin{subfigure}{.49\textwidth}
	\centering
	\includegraphics[width=0.8\linewidth]{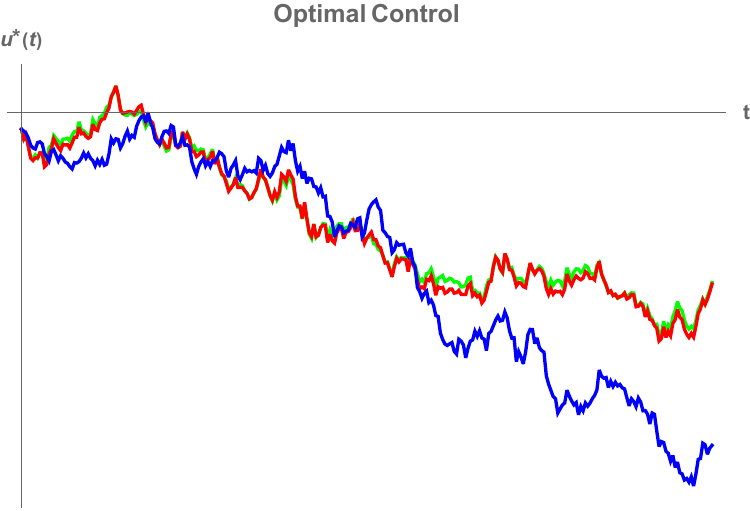}
	\caption{Realizations of the optimal control $u^*$}
	\label{fig:realizations-original-control-1}
\end{subfigure}
\begin{subfigure}{.49\textwidth}
	\centering
	\includegraphics[width=0.8\linewidth]{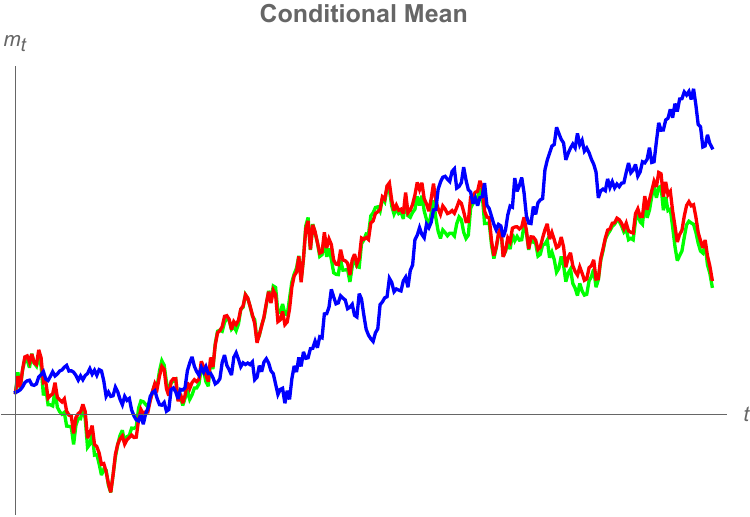}
	\caption{Quality of the estimation of $\mu$}
	\label{fig:original-cm-comparison}
\end{subfigure}
\caption{Illustrative figures for the original control problem.
Green colors represent the fully observable problem,
red colors the partial observation problem with information acquisition,
and blue colors represent the partial observation problem without information acquisition.}
\label{fig:original-problem-figures}
\end{figure}

This observation has interesting consequences regarding the relation between
the value functions and the optimally controlled states in the three problems
under consideration.
In Figure~\ref{fig:original-problem-value-function} we compare the value functions
$V^{full}$ (green), $V$ (red), and $V^{no}$ (blue) at the equilibrium conditional
variance $\equi$ for different value of $(x,m)$.
In Figure~\ref{fig:original-state-comparison-1}, we compare the evolution of
the optimally controlled state $X^*$, in Figure~\ref{fig:realizations-original-control-1},
where we compare the evolution of the optimal control $u^*$, and
Figure~\ref{fig:original-cm-comparison}, where we compare the evolution of $\mu$
and the filter conditional means $m$.
In all three figures,
the green trajectories correspond to the full observation problem,
the red trajectories to the partial observation problem with information acquisition,
and the blue trajectories to the problem with partial observations and no information acquisition.

\subsection{Sensitivity Analysis}\label{subsec:sensitivity}
We conclude our discussion of the qualitative properties of the control problem
with a sensitivity analysis of the equilibrium $\equi$,
the optimal control $\hEqui$ at the equilibrium, and the value $\vEqui$ at the equilibrium
with respect to some of the model parameters
and with respect to a parametric change of the cost function $c$ to $\alpha c$ with $\alpha>0$.
Recall that some properties of $\equi$, $\hEqui$, and $\vEqui$ where already
obtained in Proposition~\ref{prop:properties-equilibrium-state-control}
and Proposition~\ref{prop:value-equilibrium}.
Let us also recall that $\bar{\sigma}_1 := 1/\sigma_1$.

Note that the dependence of the optimal information acquisition rate on the
model parameters comes from plugging the derivative $v'$ of the value function
into the feedback map $H^*$. 
To circumvent analyzing the dependence of the value function and its derivative
on the parameters and as we are interested in the sensitivity analysis at the
equilibrium, it turns out to be convenient to introduce the adjoint state $p$
coming from the Pontryagin maximum principle and study the adjoint system at the
equilibrium level $\equi$.
From \cite[Theorem 3.1, Corollary 3.1]{ye1993nonsmooth}, for any initial
condition $\gamma_0\in[0,\infty)$, the adjoint state $p$ is given by 
\begin{equation}\label{eq:adjoint-sensitivity-relation}
  p_{t}=v'(\gamma^{*;\gamma_0}_{t})\in [0,L_{v}].
\end{equation}
This, together with Proposition~\ref{prop:properties-equilibrium-state-control},
also implies that the adjoint state at the equilibrium is constant in time,
and we denote this value by $\pEqui = v'(\equi)$.
Finally, the adjoint state satisfies
\[
  \rd p_{t} = \Bigl[\bigl(2(\bar{\sigma}_{1}^{2} + h^{*;\gamma_0}_{t})\gamma^{*;\gamma_0}_{t}
    + 2\lambda + \delta\bigr)p_{t} - \bar{a}\Bigr] \rd t,
  \quad t\in[0,\infty),
  \quad p_{0} = v'(\gamma_0),
\]
and, by Theorem \ref{th:verification-deterministic} and \eqref{eq:adjoint-sensitivity-relation},
the optimal information acquisition rate satisfies
$h^{*}_{t} = \fh((\gamma_{t}^{*;\gamma_0})^{2}p_{t})$.
In order to study the quantities of interest with respect to the model parameters,
we proceed as follows.
First, with a slight abuse of notation, we
introduce the function $\Phi:\R^{2}\to\R^{2}$ consisting of the
coefficient functions of the state and the adjoint, that is
\begin{align*}
  \Phi(\gamma,p;\theta) := \begin{pmatrix}
    -\bigl(\bar{\sigma}_{1}^{2}+\fh(\gamma^{2}p)\bigr) \gamma^{2}
	  -2\lambda\gamma +\sigma_{2}^{2}\\
    \bigl(2(\bar{\sigma}_{1}^{2} + \fh(\gamma^{2}p))\gamma
	  + 2\lambda +\delta\bigr)p - \bar{a}
  \end{pmatrix},
  \quad (\gamma,p)\in\R^2,
\end{align*}
where $\theta$ serves as a placeholder for the model parameter of interest.
From the existence of the equilibrium $\equi$, we know that for any parameter
$\theta$ there exists a unique solution
$(\equi,\pEqui)$ satisfying $\Phi(\equi,\pEqui;\theta)=0$.
In particular, the function $\phi:(0,\infty)\to\R^{2}$ which maps a parameter $\theta$
to the associated equilibrium, that is $\phi(\theta):=(\equi,\pEqui)$, is
well-defined.
The function $\phi$ allows us to study the sensitivity of the equilibrium with respect to the model parameters.
Differentiability and expressions for the derivative of $\phi$ with respect to any model parameter
$\theta$ are obtained from the implicit function theorem \cite[Theorem 1B.1]{dontchev2009implicit}.
Having computed the derivatives,
the sign of the derivatives yield the direction of change of $(\equi,\pEqui)$
with respect to a change in the model parameter $\theta$.
The sensitivity of $\hEqui$ with respect to $\theta$ can then be deduced from
$\hEqui = \fh(\equi^2\pEqui)$, and the sensitivity of $\vEqui$ with respect
to $\theta$ can be deduced from \eqref{eq:value-function-at-equilibrium}.

\begin{table}[ht]
\centering
\begin{tabular}{|c|c|c|c|}
	\hline
	$\theta$
	& $\frac{\rd \equi}{\rd \theta}$
	& $\frac{\rd \hEqui}{\rd \theta}$
	& $\frac{\rd \vEqui}{\rd \theta}$\\
	\hline
	$\sigma_{1}$ & $+$ & $+$ & $+$\\
	\hline
	$\sigma_{2}$ & $+$ & $+$ & $+$\\
	\hline
	$\kappa$ & $-$ & $+$ & $?$\\
	\hline
	$\alpha$ & $+$ & $-$ & $+$\\
	\hline
\end{tabular}
\caption{Signs of the derivatives of the equilibrium state $\equi$,
  the optimal control at the equilibrium $\hEqui$,
  and the value function at the equilibrium $\vEqui$
  with respect to the parameters of interest.}
\label{tab:sensitivity}
\end{table}

Since the calculations are rather tedious and do not add much value to the general
understanding of the model, we only report the signs of the respective derivatives in Table~\ref{tab:sensitivity}
for the model parameters $\sigma_1$, $\sigma_2$, $\kappa$,
and for the parametric change of the cost function from $c$ to $\alpha c$ for
$\alpha>0$ and postpone more details to Appendix \ref{app:sensitivity-analysis}.
We remark here that the expression obtained
for the derivative of $\vEqui$ with respect to $\kappa$ turns out to be too
complicated to infer the sign of the derivative.
From the results in Table~\ref{tab:sensitivity}
we see that the dependence of the quantities of interest on the volatility
parameters $\sigma_1$ and $\sigma_2$ are not surprising.
Indeed, injecting more noise into the system worsens the quality of estimating
$\mu$ and hence increases the uncertainty.
As a consequence, the equilibrium uncertainty increases and so does the control effort.
Hence, the value of the control problem at the equilibrium increases.
Moreover, let us highlight that the positive
sign of the derivative of $\equi$ with respect to $\sigma_2$ in combination with
Proposition~\ref{prop:properties-equilibrium-state-control} shows that the
convergence $\equi\to 0$ as $\sigma_2\to 0$ is in fact monotone.

Regarding sensitivity with respect to the cost coefficient $\kappa$ of the state $X$, we obtain that
a higher penalty for deviation of the state $X$ leads to a decrease in the equilibrium uncertainty.
Hence, the \DM\ prefers to offset the higher future cost due to insufficient control
by reducing the uncertainty at the equilibrium.
They do so by increasing the control effort.

Finally, for higher cost of control, the uncertainty at the equilibrium increases.
This is due to the fact that in light of higher cost of control, the \DM\
prefers to reduce the control effort, and hence uncertainty increases.
Hence, the overall value of the control problem should be increasing in the control cost.

The sensitivity analysis for the reduced problem can easily be transferred to the
full information problem.
Indeed, the dependency of $\equi$ and $\hEqui$ on the model parameters is trivially identical.
Regarding the sensitivity of the value
function $V$ of the full information problem at the equilibrium, one can use that according to
\eqref{eq:hjb-solution} the value functions $V$ and $v$ are related through
\[
  V(x,m,\gamma) = a_1 x^2 + a_2m^2 + a_3 xm + b_1 x + b_2m + a_2\gamma
    + \frac{1}{\delta}\bigl(C_1 + a_2\sigma_2^2\bigr) + v(\gamma),\quad (x,m,\gamma)\in\S,
\]
where we recall that $C_1 = \sigma_{1}^{2}a_{1} + \lambda \bar{\mu} b_{2} -b_{1}^{2}/({2\rho})$.
Since $a_1,a_2,a_3,b_1,b_2$ do not depend on the parameters $\sigma_1,\sigma_2$
and the cost function $c$, it follows that
\[
  \frac{\rd}{\rd \theta} V(x,m,\equi) = \frac{\rd}{\rd \theta} v(x,m,\equi)
  + \frac{\rd}{\rd \theta}\Bigl( a_{2}\equi
  + \frac{1}{\delta}\Bigl(\sigma_{1}^{2}a_{1}
	  +\lambda \bar{\mu} b_{2}
	  -\frac{b_{1}^{2}}{2\rho}
	  + a_{2}\sigma_{2}^{2}\Bigr)\Bigr)
\]
for $\theta\in\{\sigma_1,\sigma_2,\alpha\}$.
From this and using that $a_1,a_2>0$, it follows that, just as $v(\equi)$,
also $V(x,m,\equi)$ is increasing in all three parameters $\sigma_1,\sigma_2,\alpha$.

\appendix
\section{Calculations for Sensitivity Analysis}\label{app:sensitivity-analysis}
Here, we give the calculations required for the sensitivity analysis of Chapter \ref{subsec:sensitivity}.
We evaluate all expressions at $\gamma = \equi, p = \pEqui$,
sometimes suppressed for notational simplicity.
In order to apply the implicit function theorem, we are required to check that
the Jacobian with respect to the target variables $(\gamma,p)$ at the equilibrium
is invertible.
Note that the Jacobian is given by
\begin{align*}
	\rD_{(\gamma,p)}\Phi
	=\begin{pmatrix}
		f_{\gamma} + f_{h}\psi_{\gamma}
		& f_{h}\psi_{p} \\
		(-f_{\gamma\gamma} - f_{\gamma h}\psi_{\gamma})p
		& \delta - f_{\gamma} - f_{\gamma h}\psi_{p}p
	\end{pmatrix},
\end{align*}
where we write $\psi(\gamma,p):=\fh(\gamma^{2}p)$ for ease of notation.
A quick calculation shows that
\begin{align*}
 f_{\gamma} &= -2(\bar{\sigma}_{1}^{2} + \psi)\gamma - 2\lambda <0,&
 f_{h} &= -\gamma^{2} \leq 0,&\\
 f_{\gamma\gamma} &= -2(\bar{\sigma}_{1}^{2} + \psi) <0,&
 f_{\gamma h} &= -2\gamma \leq0,&\\
 \psi_{\gamma} &= 2\fh'(\gamma^{2}p) \gamma p \geq0,&
 \psi_{p} &= \fh'(\gamma^{2}p) \gamma^{2} \geq0,&
\end{align*}
from which one conclude that the determinant of the Jacobian at equilibrium is given by
\begin{equation*}\begin{split}
 \det \rD_{(\gamma,p)} \Phi(\equi,\pEqui)
 &= \frac{1}{\sigma_{1}^{4}}\Bigl(
 - 4 \equi^{2} \fh(\equi^{2} \pEqui)^{2} \sigma_{1}^{4}\\
 &\hspace{1.5cm}- 6 \sigma_{1}^{2} \Bigl[\Bigl(\equi^{3} \fh'(\equi^{2}\pEqui) \pEqui
   + \frac{\delta}{3}+\frac{4 \lambda}{3}\Bigr) \sigma_{1}^{2}+\frac{4\equi}{3}\Bigr]
   \equi \fh(\equi^{2}\pEqui)\\
 &\hspace{1.5cm}+\Bigl(-2 \pEqui \fh' (\equi^{2}\pEqui) (\delta + 4\lambda) \equi^{3}-2 \lambda (\delta +2 \lambda )\Bigr) \sigma_{1}^{4}\\
 &\hspace{1.5cm}-6 \Bigl(\equi^{3} \fh' (\equi^{2}\pEqui) \pEqui +\frac{\delta}{3}+\frac{4 \lambda}{3}\Bigr) \equi  \sigma_{1}^{2}-4 \equi^{2}\Bigr)<0.
\end{split}\end{equation*}
In particular, the Jacobian is invertible and the implicit function theorem applies.
Next, we obtain the following expression for the derivative
\begin{align*}
   \phi'(\theta) = -(\rD_{(\gamma,p)}\Phi(\equi,\pEqui,\theta))^{-1}
   \rD_{\theta}\Phi(\equi,\pEqui,\theta),
\end{align*}
where the inverse is given by
\begin{align*}
	(\rD_{(\gamma,p)}\Phi(\equi,\pEqui,\theta))^{-1}
	&= \frac{1}{\det \rD_{(\gamma,p)}\Phi(\equi,\pEqui,\theta)}
	\begin{pmatrix}
		\delta - f_{\gamma} - f_{\gamma h}\psi_{p}\pEqui
		& -f_{h}\psi_{p} \\
		(f_{\gamma\gamma} + f_{\gamma h}\psi_{\gamma})\pEqui
		& f_{\gamma} + f_{h}\psi_{\gamma}
	\end{pmatrix}
	=\begin{bmatrix}
		\geq 0 & \geq 0\\
		\leq 0 & \leq 0
	\end{bmatrix}.
\end{align*}
For $\theta=\sigma_{2}^{2}$, we obtain
\begin{align*}
	\rD_{\sigma_{2}}\Phi(\equi,\pEqui,\sigma_{2})
	= (1,0)
\end{align*}
and hence
\begin{equation}\begin{split}\label{eq:equilibrium-sensitivity}
	\frac{\rd (\equi,\pEqui)}{\rd \sigma_{2}^{2}}
	= \phi'(\sigma_{2}) = -\frac{1}{\det \rD_{(\gamma,p)}\Phi(\equi,\pEqui,\theta)}
	\begin{pmatrix}
		\delta - f_{\gamma} - f_{\gamma h}\psi_{p}\pEqui \\
		(f_{\gamma\gamma} + f_{\gamma h}\psi_{\gamma})\pEqui
	\end{pmatrix}
\end{split}\end{equation}
having a positive first entry and a negative second entry,
and hence $\equi$ is strictly increasing in $\sigma_{2}^{2}$ and thus also in $\sigma_{2}$.
For the control equilibrium value $\hEqui$, we obtain
\begin{equation}\begin{split}\label{eq:control-sensitivity}
	\frac{\rd}{\rd \sigma_{2}^{2}}\hEqui &= \fh'(\equi^{2}\pEqui)
	\Bigl( 2 \equi \pEqui\frac{\rd \equi}{\rd \sigma_{2}^{2}}
	+ \equi^{2}\frac{\rd \pEqui}{\rd \sigma_{2}^{2}}\Bigr)\\
	&= -\frac{\fh'(\equi^{2}\pEqui)}{\det \rD_{(\gamma,p)}\Phi(\equi,\pEqui,\theta)}
	   (2 \equi \pEqui(
		\delta + 2(
		\bar{\sigma}_{1}^{2} + \fh(\equi^{2}\pEqui))\equi + 2\lambda
		+ 2\equi\fh'( \equi^{2}\pEqui)\equi^{2}\pEqui)\\
		&\quad+ \equi^{2}\pEqui(
		-2(\bar{\sigma}_{1}^{2} + \fh(\equi^{2}\pEqui))
		-2\gamma\fh'(\equi^{2}\pEqui)2\equi\pEqui))\\
	&= -\frac{\fh'(\equi^{2}\pEqui)}{\det \rD_{(\gamma,p)}\Phi(\equi,\pEqui,\theta)}
		2 \equi \pEqui(
		\delta + (\bar{\sigma}_{1}^{2} 
           + \fh(\equi^{2}\pEqui))\equi + 2\lambda )>0,
\end{split}\end{equation}
as the determinant is strictly negative.
Analyzing the value, from \eqref{eq:value-function-at-equilibrium}, we obtain
\begin{equation*}
	\frac{\rd v(\equi)}{\rd \sigma_{2}^{2}}
	= \frac{1}{\delta}\Bigl(\frac{a_{3}^{2}}{2\rho}\frac{\rd \equi}{\rd \sigma_{2}^{2}}
	+ c'(\bar{h})\frac{\rd \hEqui}{\rd \sigma_{2}^{2}}\Bigr)>0,
\end{equation*}
which as a consequence of \eqref{eq:equilibrium-sensitivity} and \eqref{eq:control-sensitivity}
is strictly increasing in $\sigma_{2}^{2}$.

The expressions for the other parameters are given by
{\tiny
\begin{equation}\begin{split}
	\label{eq:implicit-derivatives-state}
	-\det \rD_{(\gamma,p)} \Phi
	\frac{\rd (\equi,\pEqui)}{\rd \sigma_{1}}
	&=\left[\begin{array}{c}
		\frac{4 \equi^{2} \big(\fh(\equi^{2} \pEqui) \equi \sigma_{1}^{2}+\big(\frac{\delta}{2}+\lambda \big) \sigma_{1}^{2}+\equi \big)}{\sigma_{1}^{5}} 
		\\
		\frac{4 \pEqui \equi \big(\fh(\equi^{2} \pEqui) \equi \sigma_{1}^{2}+2 \lambda  \sigma_{1}^{2}+\equi \big)}{\sigma_{1}^{5}} 
	\end{array}\right]
	=\begin{bmatrix}
		\geq 0 \\\geq 0 
	\end{bmatrix}\\
	-\det \rD_{(\gamma,p)} \Phi
	\frac{\rd (\equi,\pEqui)}{\rd k}
	&=\left[\begin{array}{c}
		-\frac{4 \fh'(\equi^{2} \pEqui) \equi^{4} \big(\delta  \rho_{1} -\sqrt{\delta^{2} \rho_{1}^{2}+4 k \rho_{1}}\big) \rho_{1}^{3} (\delta +\lambda)}{\sqrt{\delta^{2} \rho_{1}^{2}+4 k \rho_{1}} \big(\sqrt{\delta^{2} \rho_{1}^{2}+4 k \rho_{1}}+\rho_{1} (\delta +2 \lambda)\big)^{3}} 
		\\
		-\frac{8 \big(\pEqui \equi^{3} \sigma_{1}^{2} \fh'(\equi^{2} \pEqui) +\fh(\equi^{2} \pEqui) \equi \sigma_{1}^{2}+\lambda  \sigma_{1}^{2}+\equi \big) \big(-\delta  \rho_{1} +\sqrt{\delta^{2} \rho_{1}^{2}+4 k \rho_{1}}\big) \rho_{1}^{3} (\delta +\lambda)}{\sqrt{\delta^{2} \rho_{1}^{2}+4 k \rho_{1}} \sigma_{1}^{2} \big(\sqrt{\delta^{2} \rho_{1}^{2}+4 k \rho_{1}}+\rho_{1} (\delta +2 \lambda)\big)^{3}} 
	\end{array}\right]
	=\begin{bmatrix}
		\leq 0 \\\leq 0 
	\end{bmatrix}\\
	-\det \rD_{(\gamma,p)} \Phi
	\frac{\rd (\equi,\pEqui)}{\rd \alpha}
	&=\left[\begin{array}{c}
	\frac{2 \Bigl(h \Bigl(\frac{\equi^{2} \pEqui}{\alpha}\Bigr) \gamma \sigma_{1}^{2}+\Bigl(\frac{\delta}{2}+\lambda \Bigr) \sigma_{1}^{2}+\equi \Bigr) \fh'\Bigl(\frac{\equi^{2} \pEqui}{\alpha}\Bigr) \equi^{4} \pEqui}{\sigma_{1}^{2} \alpha^{2}} 
	\\
	\frac{2 \pEqui^{2} \fh'\Bigl(\frac{\equi^{2} \pEqui}{\alpha}\Bigr) \equi^{3} \Bigl(\fh \Bigl(\frac{\equi^{2} \pEqui}{\alpha}\Bigr) \equi \sigma_{1}^{2}+2 \lambda  \sigma_{1}^{2}+\equi \Bigr)}{\alpha^{2} \sigma_{1}^{2}} 
	\end{array}\right]
	=\begin{bmatrix}
		\geq 0\\ \geq 0
	\end{bmatrix},
\end{split}\end{equation}}%
where we multiplied by $-\det \rD_{(\gamma,p)} \Phi > 0$ to simplify the expressions.
Using this together with \eqref{eq:control-sensitivity} we obtain
{\tiny\begin{equation}\begin{split}
	-\det\rD_{(\gamma,p)} \Phi
	\frac{\rd \hEqui}{\rd \sigma_{1}}
	&= \frac{12 \pEqui \equi^{3} \big(\fh(\equi^{2} \pEqui) \equi \sigma_{1}^{2}+\frac{\big(\delta +4 \lambda \big) \sigma_{1}^{2}}{3}+\equi \big)}{\sigma_{1}^{5}}<0,\\
	-\det \rD_{(\gamma,p)} \Phi
	\frac{\rd \hEqui}{\rd k}
	&= -\frac{8 \equi^{2} \big(-\delta  \rho_{1} +\sqrt{\delta^{2} \rho_{1}^{2}+4 k \rho_{1}}\big) \rho_{1}^{3} \big(\delta +\lambda \big) \big(\fh(\equi^{2} \pEqui) \equi \sigma_{1}^{2}+\lambda  \sigma_{1}^{2}+\equi \big)}{\sqrt{\delta^{2} \rho_{1}^{2}+4 k \rho_{1}} \big(\sqrt{\delta^{2} \rho_{1}^{2}+4 k \rho_{1}}+\rho_{1} \big(\delta +2 \lambda \big)\big)^{3} \sigma_{1}^{2}}>0,\\
	-\det \rD_{(\gamma,p)} \Phi
	\frac{\rd \hEqui}{\rd \alpha}
	&= \biggl(\frac{36 \sigma_{1}^{2} \equi^{3} 
	\Bigl(\fh  \Bigl(\frac{\equi^{2} \pEqui}{\alpha}\Bigr) \equi \sigma_{1}^{2}+\frac{(\delta +4 \lambda ) \sigma_{1}^{2}}{3}+\equi \Bigr) \pEqui \fh' \Bigl(\frac{\equi^{2} \pEqui}{\alpha}\Bigr)}{\alpha^{4} \sigma_{1}^{6}}\\
    & \qquad\qquad
	+ \frac{2 \Bigl(\fh \Bigl(\frac{\equi^{2} \pEqui}{\alpha}\Bigr) \equi \sigma_{1}^{2}+\Bigl(\frac{\delta}{2}+\lambda \Bigr) \sigma_{1}^{2}+\equi \Bigr) \Bigl(\fh \Bigl(\frac{\equi^{2} \pEqui}{\alpha}\Bigr) \equi \sigma_{1}^{2}+\lambda  \sigma_{1}^{2}+\equi \Bigr) \alpha}{3}\biggr)\\
	& \quad \times \pEqui
	\equi^{2} \Bigl(\equi^{3} \Bigl(\fh  \Bigl(\frac{\equi^{2} \pEqui}{\alpha}\Bigr) \equi \sigma_{1}^{2}
	+\frac{(\delta +4 \lambda ) \sigma_{1}^{2}}{3}+\equi \Bigr) \pEqui \fh' \Bigl(\frac{\equi^{2} \pEqui}{\alpha}\Bigr)+\frac{\alpha \sigma_{1}^{2}}{6}\Bigr) <0.
\end{split}\end{equation}}
The sign of the derivative of the value function at the equilibrium with respect $\sigma_{1}$
is the same as for $\sigma_{2}$.
For the derivative of $\bar{v}$ with respect to $\alpha$, it is clear that the optimal
value at the equilibrium $\bar{v}$ is increasing in $\alpha$.

\bibliographystyle{alpha}
\bibliography{preprint}

\end{document}